\newtheorem{thm}[equation]{Theorem}
\newtheorem{ethm}[equation]{Expected Theorem}
\newtheorem{lemma}[equation]{Lemma}
\newtheorem{proposition}[equation]{Proposition}
\newtheorem{prop}[equation]{Proposition}
\newtheorem{corollary}[equation]{Corollary}
\newtheorem{definition}[equation]{Definition}
\newtheorem{cor}[equation]{Corollary}
\newtheorem{question}[equation]{Question}
\newtheorem{defn}[equation]{Definition}
\newtheorem{conj}[equation]{Conjecture}
\newtheorem*{thm*}{Theorem}
\newtheorem{remark}[equation]{Remark}
\numberwithin{equation}{section}
\newcommand{\isoarrow}{{~\overset\sim\longrightarrow~}}
\newcommand{\CO}{{\mathcal{O}}}
\newcommand{\hG}{{\hat{G}}}
\newcommand{\ZZ}{{\mathbb Z}}
\newcommand{\Gm}{{\mathbb G}_m}
\newcommand{\CG}{{\mathcal{G}}}
\newcommand{\CA}{{\mathcal{A}}}
\newcommand{\CT}{{\mathcal{T}}}
\newcommand{\CL}{{\mathcal{L}}}
\newcommand{\CV}{{\mathcal{V}}}
\newcommand{\GG}{{\mathbb G}}
\newcommand{\ra}{{~\rightarrow~}}
\newcommand{\QQ}{{\mathbb Q}}
\newcommand{\Ql}{{\mathbb Q}_{\ell}}
\newcommand{\Qlb}{{\overline{\mathbb Q}_{\ell}}}
\newcommand{\Fp}{{{\mathbb F}_p}}
\newcommand{\Fq}{{{\mathbb F}_q}}
\newcommand{\RR}{{\mathbb R}}
\newcommand{\ad}{{\mathbf A}}
\newcommand{\CC}{{\mathbb C}}
\newcommand{\PP}{{\mathbb P}}
\newcommand{\Qp}{{{\mathbb Q}_p}}
\begin{document}

\author{Wee Teck Gan}
\thanks{W.T. Gan was partially supported by a Singapore government MOE Tier 1 grant R-146-000-320-114.
}

\address{Wee Teck Gan\\Mathematics Department, National University of Singapore  \\Block S17, 10 Lower Kent Ridge Road
Singapore 119076
}
 \email{matgwt@nus.edu.sg}

\author{Michael Harris}
\thanks{M. Harris  was partially supported by NSF Grant DMS-2001369, and by a Simons Foundation Fellowship, Award number  663678
}

\address{Michael Harris\\
Department of Mathematics, Columbia University, New York, NY  10027, USA}
 \email{harris@math.columbia.edu}

 \author{Will Sawin}
\thanks{W. Sawin was partially supported by NSF grant DMS-2101491 and served as a Clay Research Fellow.
}

\address{Will Sawin\\
Department of Mathematics, Columbia University, New York, NY  10027, USA}
 \email{sawin@math.columbia.edu}

\date{\today}

\title[Local parameters of supercuspidal representations]{Local parameters of supercuspidal representations}

\maketitle

\begin{abstract}
For a connected reductive group $G$ over a non-archime\-dean local field $F$ of positive characteristic,   Genestier and Lafforgue have attached  a semisimple parameter $\CL^{ss}(\pi)$ to each irreducible representation $\pi$. Our first result shows that the Genestier-Lafforgue parameter of a tempered $\pi$  can be uniquely refined to a tempered L-parameter $\CL(\pi)$, thus giving the unique local Langlands correspondence which is compatible with the Genestier-Lafforgue construction. 
Our second result establishes ramification properties of $\CL^{ss}(\pi)$ for unramfied $G$ and supercuspidal $\pi$ constructed by induction from an open compact (modulo center) subgroup.  If $L^{ss}(\pi)$ is pure in an appropriate sense, we show that  $\CL^{ss}(\pi)$ is ramified (unless $G$ is a torus).   If the inducing subgroup is sufficiently small in a precise sense, we show $\mathcal{L}^{ss}(\pi)$ is wildly ramified.  The proofs are  via global arguments, involving the construction of Poincar\'e series with strict control on ramification when the base curve is $\PP^1$ and a simple application of Deligne's Weil II.
\end{abstract}

\tableofcontents


\section{Introduction}

Let $G$ be a connected reductive group over a non-archimedean local field $F$ and let $\pi$ be an irreducible smooth representation of $G(F)$. The local Langlands conjecture (LLC) posits that to such a $\pi$ can be attached an L-parameter 
\[  \CL(\pi) : WD_F \ra {}^LG(C), \]
which is  an admissible homomorphism from the Weil-Deligne group  $WD_F = W_F \times SL_2(C)$ with values in the Langlands $L$-group ${}^LG$ over an appropriate algebraically closed field $C$ of characteristic $0$. In what follows, we take $C = \Qlb$ with $\ell$ different from ${\rm char}(F)$. One condition of being admissible is that $\CL(\pi)$ should be {\it semisimple}, in the sense that  if $\CL(\pi)(WD_F) \cap \hat{G}(C)$ (where $\hat{G}(C)$ is the Langlands dual group of $G$) is contained in a parabolic subgroup $P \subset \hat{G}(C)$, then it is contained in a Levi subgroup of $P$.  Such L-parameters can also be described in the language of semisimple Weil-Deligne parameters $(\rho, N)$, as we recall in \S \ref{pureWD}.
  \vskip 5pt

 On the other hand,  when $F$ is of positive characteristic $p$,   Genestier and Lafforgue \cite{GLa} have defined a {\it semisimple} Weil parameter
\[ \CL^{ss}(\pi): W_F \ra {}^LG(C). \]
This semisimple parameter $\CL^{ss}(\pi)$ should be related to $\CL(\pi)$ via
\begin{equation} \label{E:ssL}
  \CL^{ss}(\pi) (w)  =  \CL(\pi) \left( w, \left( \begin{array}{cc}
|w|^{1/2} & \\
& |w|^{-1/2} \end{array} \right) \right). \end{equation}
A natural question is then: can the semisimple parameter $\CL^{ss}(\pi)$ of Genestier-Lafforgue be enriched to give the true L-parameter $\CL(\pi)$?
\vskip 5pt

By the properties of the Genestier-Lafforgue construction (recalled in \S \ref{llparam}), it suffices to address this question for (essentially) discrete series representations.
For such a $\pi$, one desiderata of  the LLC is that $\CL(\pi)$ should be {\it irreducible} (or {\it elliptic}) -- its image is contained in no proper parabolic $P$, and in particular is essentially tempered (or equivalently pure). It is not hard to see (cf. Lemma \ref{L:purecompletion})  that for any semisimple parameter $\CL^{ss}(\pi)$, there is at most one essentially tempered L-parameter that could give rise to it via (\ref{E:ssL}). The first result of this paper is the following theorem:
\vskip 5pt

\begin{thm} \label{T:Lparameter}
Let $G$ be a connected reductive group over a local function field $F$ of positive characteristic $p > 3$ (for simplicity). 
Let $\pi$ be an irreducible discrete series representation of $G(F)$. Then there exists a (necessarily unique) essentially tempered L-parameter $\CL(\pi)$ whose associated semisimple parameter (via (\ref{E:ssL})) is the Genestier-Lafforgue parameter $\CL^{ss}(\pi)$. 
\end{thm}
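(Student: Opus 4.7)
The strategy is to globalize $\pi$, apply V.~Lafforgue's global Langlands correspondence for function fields to obtain a global parameter whose local behavior refines the Genestier--Lafforgue parameter, and then use Deligne's Weil II to show that the resulting local Weil--Deligne representation is pure (equivalently, essentially tempered). Uniqueness of an essentially tempered refinement is already supplied by Lemma~\ref{L:purecompletion}, so only existence is at issue.

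Concretely, I would realize $F$ as the completion at a closed point $v$ of $\PP^1$ over a suitable finite field, and construct a reductive group $\bG$ over the global function field $K$ with $\bG_{K_v} \cong G$. The critical input is then to produce a cuspidal automorphic representation $\Pi$ of $\bG(\ad_K)$ with $\Pi_v \cong \pi$ and with tightly controlled ramification at all other places. Since $\pi$ is a discrete series, its matrix coefficients are essentially compactly supported modulo center, and $\Pi$ can be extracted from a Poincar\'e series built from such a matrix coefficient. The use of $\PP^1$ as the base curve, together with strong ramification restrictions away from $v$, is precisely what makes the non-vanishing of the Poincar\'e series tractable and keeps the subsequent sheaf-theoretic geometry tame.

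Next, attach to $\Pi$ the global V.~Lafforgue parameter $\sigma_\Pi : \mathrm{Gal}(\bar K/K) \to {}^L\bG(\Qlb)$. By the compatibility between the global V.~Lafforgue construction and the local Genestier--Lafforgue construction, the semisimplification of $\sigma_\Pi|_{W_F}$ at $v$ equals $\CL^{ss}(\pi)$. Moreover, $\sigma_\Pi|_{W_F}$ carries an intrinsic monodromy operator $N_v$ arising from its $\ell$-adic Weil--Deligne structure, so the pair $(\sigma_\Pi|_{W_F},N_v)$ is a natural candidate for $\CL(\pi)$. The remaining step is to show that this Weil--Deligne representation is pure, which is equivalent to being essentially tempered. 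This is where Deligne's Weil II applies: $\sigma_\Pi$ is cut out by the $\ell$-adic cohomology of sheaves on an open subscheme of $\PP^1$, and the ramification control secured in the globalization puts one in a setting where the weight bounds of Weil II force Frobenius purity at $v$. Purity of the semisimple part combined with the local monodromy theorem then delivers purity of the full Weil--Deligne parameter, hence the sought-after essentially tempered $\CL(\pi)$.

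The main obstacle is the globalization step: producing a cuspidal $\Pi$ with $\Pi_v \cong \pi$ \emph{and} with ramification at the remaining places of $\PP^1$ restrictive enough that Weil II will produce purity at $v$. Once the Poincar\'e series machinery over $\PP^1$ is set up, the weight argument via Weil II is, as the abstract emphasizes, a direct application, and uniqueness of the tempered refinement is formal from Lemma~\ref{L:purecompletion}, completing the proof.
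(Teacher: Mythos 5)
The high-level outline is reasonable, but there are two genuine gaps in the argument that the paper addresses in ways you have not anticipated.

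First, the mechanism by which "Weil II forces Frobenius purity" is not merely ramification control; it is \emph{irreducibility of the global parameter}. The result actually invoked is the purity of the monodromy-weight filtration for $\hG$-local systems (Lemma~\ref{mwf}, quoting Sawin--Templier, which generalizes Deligne's theorem), and its hypothesis is that the global parameter $\CL(\Pi)$ have image not contained in any proper parabolic of ${}^LG$. A generic globalization of $\pi$ with mild ramification away from $v$ need not have this property, so simply constructing a Poincar\'e series with "tightly controlled ramification" does not yet give you anything. The paper's device to force irreducibility is to place, at an auxiliary place $x$, a Kloosterman supercuspidal $\pi'$ of Heinloth--Ng\^o--Yun whose local parameter is already known to be irreducible: since $\CL^{ss}(\Pi_x) = \CL^{ss}(\pi')$ is irreducible, so is $\CL(\Pi)$. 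Your proposal contains no such mechanism, and this is the essential idea your argument is missing. (Incidentally, for this theorem the paper globalizes over an arbitrary smooth projective curve; the special choice $Y=\PP^1$ is reserved for the ramification results of Theorem~\ref{mainthm}, where a quite different geometric argument about tame local systems on $\Gm$ is needed.)

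Second, your assertion that a discrete series $\pi$ has "essentially compactly supported" matrix coefficients, so that $\Pi$ "can be extracted from a Poincar\'e series built from such a matrix coefficient," does not hold. Only supercuspidals have compactly-supported matrix coefficients modulo center; general discrete series have merely square-integrable ones, which is not enough to make the Poincar\'e series construction work. The paper handles the non-supercuspidal discrete series case separately (Proposition~\ref{mixedds}): one replaces the matrix coefficient with a pseudo-coefficient $f_\pi$ and uses a version of the Deligne--Kazhdan simple trace formula (established in the appendix by Beuzart-Plessis) to produce a cuspidal $\Pi$ such that $\operatorname{Tr}\Pi_z(f_\pi)\neq 0$, which only guarantees that $\Pi_z$ has the same cuspidal support as $\pi$ (not $\Pi_z\cong\pi$), but that suffices because $\CL^{ss}$ is compatible with parabolic induction. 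Your proposal would need both of these repairs to go through.
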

In particular, for a discrete series $\pi$, we have a unique candidate $\CL(\pi)$ for its L-parameter. As a consequence, we have:

\vskip 5pt

\begin{corollary}  \label{C:Lparameter}
There is a well-defined 
\[ 
 {\CL}:  \mathcal{T}(G(F)) \longrightarrow  \{ \text{essentially tempered L-parameters} \}  \]
refining the construction of Genestier-Lafforgue, where $\mathcal{T}(G(F))$ is the set of equivalence classes of irreducible essentially tempered representations of $G(F)$.
\end{corollary}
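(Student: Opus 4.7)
The plan is to reduce the essentially tempered case to the essentially discrete series case covered by Theorem~\ref{T:Lparameter}, via the Harish-Chandra / Langlands classification of tempered representations. Given $\pi \in \mathcal{T}(G(F))$, we realize $\pi$ as a direct summand of $\mathrm{Ind}_P^G(\sigma)$ for some parabolic $P = MN$ and some essentially discrete series $\sigma$ of $M(F)$, with the pair $(M,\sigma)$ unique up to $G(F)$-conjugation. Applying Theorem~\ref{T:Lparameter} to $M$ and $\sigma$ (after an auxiliary central twist to reduce to the genuine discrete series hypothesis, if necessary) yields a unique essentially tempered L-parameter $\CL(\sigma) : WD_F \to {}^LM(C)$ whose associated semisimple parameter is $\CL^{ss}(\sigma)$. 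We then define
\[ \CL(\pi) := \iota_M^G \circ \CL(\sigma) : WD_F \to {}^LG(C), \]
where $\iota_M^G : {}^LM \hookrightarrow {}^LG$ is the natural inclusion of dual groups. Since $\hat{M}$ is a Levi of $\hat{G}$, boundedness of the image of $W_F$ under $\CL(\sigma)$ modulo $Z(\hat{M})$ forces $\CL(\pi)$ to be essentially tempered as well.

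To verify the refinement property (\ref{E:ssL}), we invoke the compatibility of the Genestier--Lafforgue semisimple parameter with parabolic induction recalled in \S\ref{llparam}: every irreducible constituent of $\mathrm{Ind}_P^G(\sigma)$ has semisimple parameter $\iota_M^G \circ \CL^{ss}(\sigma)$. Combining this with the identity
\[ \CL^{ss}(\sigma)(w) \;=\; \CL(\sigma)\bigl(w,\, \mathrm{diag}(|w|^{1/2},\,|w|^{-1/2})\bigr) \]
furnished by Theorem~\ref{T:Lparameter} for $(M,\sigma)$, and the functoriality of $\iota_M^G$, yields the desired relation (\ref{E:ssL}) for $\CL(\pi)$. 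Well-definedness then follows from the uniqueness of $(M,\sigma)$ up to $G(F)$-conjugation, which translates to $\hat{G}$-conjugation on the parameter side and hence leaves the equivalence class of $\CL(\pi)$ unchanged.

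The principal obstacle I anticipate is ensuring that the Harish-Chandra / Langlands classification is available in the required form for an arbitrary connected reductive $G$ over a local function field of characteristic $p > 3$: specifically, the existence and uniqueness (up to conjugacy) of the datum $(M,\sigma)$ supporting an essentially tempered $\pi$. This is classical in characteristic zero via work of Harish-Chandra, Silberger and Waldspurger, and the same arguments adapt to positive characteristic once the Plancherel formula and the $R$-group analysis are in hand. Should any input be missing in the literature in exactly the required form, a fallback is to appeal directly to Lemma~\ref{L:purecompletion} once any essentially tempered completion of $\CL^{ss}(\pi)$ has been exhibited, since the lemma guarantees that such a completion, if it exists, is unique.
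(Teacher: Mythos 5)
Your proof is correct and takes essentially the same route the paper intends: the introduction explains that compatibility with parabolic induction (Theorem~\ref{parab}(iv)) reduces the problem to (essentially) discrete series, and the paper treats Corollary~\ref{C:Lparameter} as a direct consequence of Theorem~\ref{T:Lparameter} via exactly this reduction.

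One small remark. The cleanest way to see well-definedness is not via uniqueness of the Harish-Chandra datum $(M,\sigma)$ but via Lemma~\ref{L:purecompletion}(b), as you note in your fallback: once \emph{any} essentially tempered $\varphi$ with semisimple part equal to $\CL^{ss}(\pi)$ has been exhibited, it is automatically the unique one, regardless of what choices went into producing it. That sidesteps entirely the question of conjugacy of tempered data, and it is the argument that should be foregrounded. A second small imprecision: your line ``boundedness of the image of $W_F$ under $\CL(\sigma)$ modulo $Z(\hat M)$ forces $\CL(\pi)$ to be essentially tempered'' is not quite right as stated, since boundedness mod $Z(\hat M)$ does not a priori give boundedness mod $Z(\hat G)\subset Z(\hat M)$. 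The correct argument is to first twist $\pi$ by a character so that it is genuinely tempered; then the $\sigma$ supplied by the Harish-Chandra classification is a unitary discrete series, $\CL(\sigma)$ is tempered (bounded image in $\hat M$ itself, before passing to adjoint quotients), so $\iota_M^G\circ\CL(\sigma)$ is tempered, and twisting back by the dual character only changes the parameter by a central cocharacter of $\hat G$, preserving essential temperedness.
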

Unfortunately,  we are not quite able to show that the image of a discrete series representation under $\CL$   is  irreducible (or elliptic).
\vskip 10pt

Assume now that $G$ is an unramified reductive group.
Our second result concerns the ramification property of the semisimple parameter $\CL^{ss}(\pi)$ of a supercuspidal representation $\pi$.  
If the Frobenius eigenvalues of $\CL^{ss}(\pi)$ are pure in an appropriate sense, then the LLC asserts that   $\CL^{ss}(\pi)$ itself is in fact {\it irreducible}.  In particular, unless $G$ is a torus, one expects that $\CL^{ss}(\pi)$ is {\it ramified}.  More precisely, we show:

\vskip 10pt

\begin{thm}\label{mainthm}  
Suppose $F$ is the non-archimedean local field $k(\!(t)\!)$, where $k = \Fq$ is a finite field of order $q = p^m > 5$ for some prime $p$.    Let $G$ be an unramified connected reductive  group over $F$ which is not a torus.  Assume that  $\pi$ is an irreducible  supercuspidal representation of $G(F)$ of the form
$$\pi \isoarrow {\rm c-Ind}_U^G(F) \tau$$
for some (irreducible) smooth representation $\tau$ of a compact open (modulo center) subgroup  $U$ with coefficients in $\Qlb$. 
 
 \vskip 5pt

\noindent (i)  Suppose that $\CL^{ss}(\pi)$ is {\rm pure}:  if ${\rm Frob}_F \in W_F$ is any Frobenius element, then the eigenvalues of $\CL^{ss}(\pi)({\rm Frob}_F)$ are all Weil $q$-numbers of the same weight  (in which case we say that $\pi$ is a {\bf pure supercuspidal} representation). 
Then $\CL^{ss}(\pi) = \CL(\pi)$ is ramified:  it is non-trivial on the inertia subgroup $I_F \subset W_F$. 
 \vskip 5pt
 
 \noindent (ii)  Suppose that $U$ is sufficiently small. Then $\CL^{ss}(\pi)$ (and hence $\CL(\pi)$) is wildly ramified: it is non-trivial on the wild inertia subgroup of $W_F$
 \end{thm}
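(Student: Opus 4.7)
The strategy is the one announced in the abstract: globalize $\pi$ to an automorphic representation of $G$ over the global function field $K=k(\PP^1)$, apply V.~Lafforgue's global Langlands correspondence together with its compatibility with the Genestier-Lafforgue construction, and then conclude via Deligne's Weil II. The main obstacle is Step 1 below --- the construction of a Poincar\'e series on $G$ over $\PP^1$ that is simultaneously rigid enough to pin down ramification and rich enough to have a nonzero cuspidal projection; the hypothesis $q>5$, and the ``sufficiently small'' hypothesis on $U$ in part (ii), are precisely what make this balance workable.

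\textbf{Step 1 (Poincar\'e series globalization).} Fix $\PP^1=\PP^1_{\Fq}$ and identify $F$ with the completion $K_{v_0}$ at $v_0:=(t{=}0)$; write $\infty$ for the other distinguished place. Since $G$ is unramified, spread it to a reductive group scheme $\CG$ over $\PP^1\setminus\{v_0,\infty\}$. Construct a test function $\phi=\otimes_v \phi_v$ on $G(\BA_K)$ whose component at $v_0$ is a matrix coefficient of $\tau$ realizing $\pi={\rm c-Ind}_U^{G(F)}\tau$, whose components at every finite place $v\neq v_0,\infty$ are indicators of hyperspecial maximal compacts of $\CG(\CO_v)$, and whose $\infty$-component is supported on a compact-mod-center open subgroup of prescribed level. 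Averaging $\phi$ over $G(K)$ produces a Poincar\'e series $P_\phi$ on $G(K)\backslash G(\BA_K)$. Nonvanishing of its cuspidal projection is verified by explicit pairing against a well-chosen test vector; the hypothesis $q>5$ enters here, to provide enough room in the Bruhat-Tits building at $\infty$. Some irreducible cuspidal constituent $\Pi$ of this projection then satisfies $\Pi_{v_0}\cong\pi$, is spherical at every $v\neq v_0,\infty$, and has controlled ramification at $\infty$ --- unramified for (i), tame for (ii), the latter made possible once $U$ is sufficiently small.

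\textbf{Step 2 (V.~Lafforgue, Weil II, and proof of (i)).} Applying V.~Lafforgue to $\Pi$ and using Genestier-Lafforgue compatibility, we obtain a semisimple parameter
\[ \sigma : \pi_1\bigl(\PP^1\setminus\{v_0,\infty\}\bigr) \longrightarrow \hG(\Qlb)\rtimes W_{\Fq} \]
with $\sigma|_{W_{F_{v_0}}}\cong \CL^{ss}(\pi)$. Suppose, toward a contradiction in (i), that $\CL^{ss}(\pi)$ is unramified. Since Step 1 also arranges $\Pi_\infty$ to be unramified, $\sigma$ extends across both $v_0$ and $\infty$ to a representation of $\pi_1(\PP^1)\simeq \text{Gal}(\bar\Fq/\Fq)$, which is procyclic. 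The role of Deligne's Weil II here is the ``simple application'' advertised in the abstract: purity of $\CL^{ss}(\pi)$ together with nonramification of $\Pi$ away from $\{v_0,\infty\}$ forces the global $\sigma$ to be pure of a single weight, which is what rules out extraneous wild inertia at $\infty$ and validates the extension to $\pi_1(\PP^1)$. Procyclicity then gives $\sigma$ abelian image, contained in a maximal torus of $\hG$; since $G$ is not a torus, $\CL^{ss}(\pi)$ therefore factors through a proper Levi of $\hG$. By compatibility of the Genestier-Lafforgue construction with parabolic induction, $\pi$ is then a subquotient of a properly induced representation, contradicting its supercuspidality.

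\textbf{Step 3 (proof of (ii)).} With $U$ sufficiently small, Step 1 yields a globalization in which $\Pi_\infty$ is tamely ramified, so $\sigma$ factors through $\pi_1^{\rm tame}(\PP^1\setminus\{v_0,\infty\})$. If $\CL^{ss}(\pi)$ were also tamely ramified at $v_0$, then $\sigma$ would factor entirely through this tame quotient, which is pro-solvable: its geometric part is pro-cyclic (generated by a single tame inertia subject to the product-one relation), and the arithmetic extension is by $\hat{\mathbb Z}=\text{Gal}(\bar\Fq/\Fq)$. The Borel fixed point theorem, applied to the Zariski closure of the image of $\sigma$ in $\hG$, then forces that image into a Borel subgroup --- a proper parabolic since $G$ is not a torus --- and the parabolic-induction contradiction of Step 2 runs again. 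As flagged at the start, the entire argument rests on Step 1: the tight control of Eisenstein contributions when the base curve is $\PP^1$ is where all the hypotheses ($q>5$, smallness of $U$) are spent.
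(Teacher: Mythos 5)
Your overall framing (Poincar\'e-series globalization over $K=k(\PP^1)$, V.~Lafforgue's global parametrization, Deligne's Weil II, and then working with local systems on $\Gm$) is the same circle of ideas the paper uses, but Step~2 of your proposal has a genuine logical gap that the rest of the argument is built on.

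The fatal step is the final sentence of Step~2: you conclude that since $\CL^{ss}(\pi)$ has image in a maximal torus (hence factors through a proper Levi of ${}^LG$), ``by compatibility of the Genestier--Lafforgue construction with parabolic induction, $\pi$ is a subquotient of a properly induced representation.'' But the compatibility statement (Theorem~\ref{parab}(iv) in the paper) only goes one way: if $\pi$ is a constituent of $\mathrm{Ind}_P^G\sigma$, then $\CL^{ss}(\pi)=i_M\circ\CL^{ss}(\sigma)$. It does \emph{not} say that a parameter factoring through a Levi forces the representation to be non-supercuspidal. That converse is a form of the local Langlands conjecture itself (supercuspidals should have elliptic parameters), and is precisely the kind of statement the theorem is trying to extract unconditionally. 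Using it as an input is circular. The same gap reappears in Step~3 where you invoke ``the parabolic-induction contradiction of Step~2.'' Corollary~\ref{sph} in the paper, which does say that a pure unramified parameter forces the representation to lie in an unramified principal series, is itself a \emph{consequence} of Theorem~\ref{mainthm}(i), not a tool available to prove it.

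The paper's actual argument avoids this converse entirely and is structured quite differently. For (i), they do not place $\pi$ at a single point and make everything else unramified; instead they place $\pi$ at \emph{every} rational point $z\in\Gm(k)$, impose a pro-$p$-Iwahori-fixed vector at $\infty$, and, crucially, impose at $0$ a constituent of a tame principal series whose inducing character $\chi_k$ on $T(k)$ is $\mu\circ\alpha$ for a carefully chosen $\alpha\in X^*(T)$ (a sum of highest roots) and a faithful character $\mu$ of $k^\times$. Assuming $\CL^{ss}(\pi)$ is pure and unramified, Weil II lets the associated local system extend across all the points $z\in\Gm(k)$; the remaining ramification at $0,\infty$ is tame with abelian image at $0$, so Corollary~\ref{abelian} decomposes the sheaf into rank-one pieces. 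The traces of $\operatorname{Frob}_z$ over $z\in k^\times$ are then expressed via Lemma~\ref{Frobenius-trace} as sums of terms $\mu(z)^{\langle\alpha,\beta_j\rangle}$, and orthogonality of characters of $k^\times$ (this is where $q>5$ enters, to isolate the single term with $\langle\alpha,\beta_j\rangle=2$) produces a direct numerical contradiction with the fact that these traces are $z$-independent. For (ii), the paper builds \emph{two} globalizations $\Pi_1,\Pi_2$ that match $\pi$ at $0$ and are unramified away from $\{0,\infty\}$, but have distinct tame behavior at $\infty$ (one Iwahori-spherical, one not, the latter possible because $U$ is sufficiently small). If $\CL^{ss}(\pi)$ were tame, both parameters would factor through $\pi_1^{\rm tame}(\Gm)$, where the tame inertia at $0$ and at $\infty$ are literally the same subgroup; agreement at $0$ would then force agreement at $\infty$, contradicting the construction. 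No appeal to a converse of parabolic-induction compatibility is needed. You will need to abandon the ``Levi $\Rightarrow$ not supercuspidal'' shortcut and replace it with a trace/Fourier argument of this flavor (for (i)) or the two-globalization comparison (for (ii)).
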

 Let us make a few  remarks:
 \vskip 5pt
 \begin{itemize}
 \item Conjecturally, every $\pi$ is compactly induced in the above sense, and this property was
recently proved by Fintzen to be true as long as $p$ does not divide  the order of the Weyl group of $G$. 

 \item for (i) to be non-vacuous, it would have been good to produce some examples of supercuspidal representations whose Genestier-Lafforgue parameter $\CL^{ss}(\pi)$ is pure. This does not seem so simple to do. In \S \ref{S:questions}, we produce some such examples  of depth 0 generic supercuspidal representations.
  
  \item  The precise definition of ``sufficiently small" in (ii) above is given at the beginning of \S \ref{wildram}.  \end{itemize}

\vskip 10pt

   When $G = {\rm GL}(n)$,  our two theorems are part of the local Langlands correspondence (LLC) proved by Laumon, Rapoport and Stuhler \cite{LRS93}.  When $G$ is a split classical group,  the results were proved in most cases by Ganapathy and Varma by comparison with Arthur's LLC over $p$-adic fields, via the method of close local fields \cite{GV,A}.  Thus the novelty of this paper is that, like the Genestier-Lafforgue result, Theorems \ref{T:Lparameter} and  \ref{mainthm} are valid uniformly for all (unramified) reductive groups, including exceptional groups.   
\vskip 5pt

Of course, Theorem \ref{mainthm}(i) is a much weaker statement than the expected local Langlands correspondence, which not only asserts that $\CL^{ss}(\pi)$ is irreducible when $\pi$ is pure but gives a complete description of the {\it $L$-packet} of representations with the given local parameter.  However, all proofs of the local Langlands correspondence for ${\rm GL}(n)$ begin with the observation that a supercuspidal representation $\pi$ of ${\rm GL}(n,F)$ is not {\it incorrigible}  \cite{H19} -- that it becomes a constituent of a principal series representation after a finite series of base changes corresponding to cyclic extensions $F_{i+1}/F_i$, $i = 0, \dots, n$, where $F_0 = F$.  Since one can choose the $F_i$ so that the Galois parameter restricted to $W_{F_{n+1}}$ is unramified, this is obvious if we know that 
\begin{itemize}
\item[(1)] no supercuspidal representation of any Levi factor of ${\rm GL}(n,F)$ other than ${\rm GL}(1)^n$ has an unramified parameter, and
\item[(2)] the parametrization is compatible with cyclic stable base change.
\end{itemize}
In the proofs of the LLC in \cite{LRS93, HT01,He00}, point (1) is proved by reference to 
Henniart's numerical local correspondence \cite{He88}, whereas point (2) is proved by a global method.  In Scholze's proof of the LLC for ${\rm GL}(n)$ \cite{Sch13}, point (1) is proved by a geometric argument using a study of nearby cycles in an integral model of the Lubin-Tate local moduli space.  
\vskip 5pt

Starting with point (1), the LLC  is deduced by a study of the fibers of (stable) base change for cyclic extensions of
$p$-adic fields.  This is ultimately a consequence of deep properties -- the ``advanced theory" -- of the Arthur-Selberg trace formula, that
are established by Arthur and Clozel in \cite{AC}. For ${\rm GL}(n)$ over a number field, the trace formula and its twisted analogue are automatically stable, but there is  as yet no stable trace formula for general groups over function fields.  If and when a stable trace formula is developed in this generality, it is likely that Theorem \ref{mainthm}(i) will provide the starting point for an inductive proof of the LLC, at least for pure supercuspidal representations.

\vskip 5pt

The proofs of both Theorems \ref{T:Lparameter} and \ref{mainthm} proceed via  global-to-local arguments and thus involve globalizing a given supercuspidal (or discrete series) representation.  For a supercuspidal representation, this globalization is achieved by an adaptation of the method of \cite{GLo} which involves a delicate construction of Poincar\'e series
with precise control on ramification.  Let us elaborate on the proof of Theorem \ref{mainthm}(i) as an example. 
\vskip 5pt

Take  the base curve to be $Y = \PP^1$ and let $K$ denote the global function field $k(Y) = k(t)$.  
By the method of Poincar\'e series used in \cite{GLo}, we construct a cuspidal automorphic representation $\Pi$ of $G(\ad_K)$ that is unramified outside the set of places of $K$ corresponding to the set $Y(k)$ of $k$-rational points of $Y$.  
At the points $v = 0$ and $v = \infty$,  we arrange for the local components of $\Pi$  to be tamely ramified, with $\Pi_\infty^{I_-(p)} \neq 0$ and $\Pi_0^{I^+(p)} \neq 0$; here $I_-$ and $I^+$ denote opposite Iwahori subgroups of 
$G(k(\frac{1}{t}))$ and $G(k(t))$ respectively and $I_-(p) \subset I_-$ and $I^+(p) \subset I^+$ are the corresponding maximal pro-$p$ subgroups.   At the other rational points $v$, we assume $\Pi_v$ to be isomorphic to the fixed supercuspidal $\pi$ in the context of Theorem \ref{mainthm}(i).   Moreover, we assume $\Pi_0$ contains a sufficiently regular character $\alpha$ of the Iwahori subgroup $I^+$.  V. Lafforgue has attached to such a $\Pi$ a local system $L(\pi)$ over $Y \setminus \PP^1(k)$.   Supposing that $\CL^{ss}(\pi)$ is pure and unramified, it follows by Deligne \cite{De80} that $L(\pi)$ extends to a local system over $\Gm$.  Provided $q > 3$,  we can then choose $\alpha$ to obtain a contradiction.
\vskip 5pt

We conclude the introduction with a brief summary of the sections that follow.
After recalling the results of V. Lafforgue and Genestier-Lafforgue in \S \ref{llparam}, and basic properties  about Weil-Deligne parameters in \S \ref{pureWD}, we complete the proof of Theorem \ref{T:Lparameter} at the end of \S \ref{pureWD}.  We then lay the groundwork for the proof of Theorem \ref{mainthm}(i). In \S \ref{S:local}, we establish some preparatory results about local systems on open subsets of $\PP^1$ and in \S \ref{S:unramred}, we prove a useful result (Lemma \ref{parahoric-coordinates-lemma}) about parahoric subgroups of unramified groups.
 In \S \ref{S:open}, we construct some open compact subgroups of $G(\mathbb{A}_K)$ via building theoretic considerations  and use these for the construction of appropriate Poincar\'e series. Using this Poincar\'e series construction and our earlier results about local systems on $\PP^1$, we prove Theorem \ref{mainthm}(i) in \S \ref{S:mainthm}. The proof of Theorem \ref{mainthm}(ii) is given in \S \ref{wildram}: this requires yet another Poincar\'e series construction. We then highlight some natural questions suggested by our results in \S \ref{S:questions}. An example is whether one can show that a generic supercuspidal representation is  pure. We show this for certain generic supercuspidal representations of depth 0.
 In \S \ref{BCIR} we explain the implications of our results for incorrigible representations and prove a weak version of the local Langlands correspondance for $GL(n)$:  all supercuspidal representations of $GL(n,F)$ are either pure with ramified Galois parameters or have wildly ramified parameters.  The purpose of this is to provide a sufficently general template that has the potential of being extended to general $G$. Finally, in \S \ref{DKFS} we propose the obvious conjecture linking the Fargues-Scholze parametrization of representations of groups over $p$-adic fields to the Genestier-Lafforgue parametrization, by means of the Deligne-Kazhdan method of close local fields.


\section*{Acknowledgments}  The authors thank H\'el\`ene Esnault, Tony Feng, Jessica Fintzen, Dennis Gaitsgory, Guy Henniart, Tasho Kaletha, Jean-Pierre Labesse, Bertrand Lemaire, and Zhiwei Yun for helpful discussions of various aspects of the paper.  We benefited from exchanges with Laurent Fargues and Peter Scholze regarding the material in the final section.  We are especially grateful to Luis Lomel\'i for his role, in collaboration with one of us, in developing the main techniques on which our results are based, and for his comments in the early stages of this project. Finally, we thank Raphael Beuzart-Plessis for providing us with the appendix, allowing us to prove Theorem \ref{T:Lparameter} for discrete series representations.

\section{Global and local Langlands-Lafforgue parameters}\label{llparam}

We begin by reviewing the global results of V. Lafforgue and the local results of Genestier-Lafforgue.
\vskip 5pt

\subsection{The global results of V. Lafforgue}\label{rev2}
Let  $Y$ be a smooth projective curve over $k$ and $K = k(Y)$  be its global function field.  
Let $G$ be a connected reductive algebraic group over $K$.  Let $\hG$ be the Langlands dual group of $G$ with coefficients in $\Qlb$, and let ${}^LG = \hG \rtimes Gal(K^{sep}/K)$ be the Langlands $L$-group of 
$G$ (in the Galois form).  
\vskip 5pt

Let $\CA_0(G) = \CA_0(G,Y)$ denote the set of cuspidal automorphic representations of $G$ with central character of finite order.  We let 
$\CG^{ss}(G)$ denote the set of equivalence classes of compatible families of semisimple $\ell$-adic homomorphisms, for $\ell \neq p$:
$$\rho_\ell:  Gal(K^{sep}/K) \ra {}^LG.$$
The term {\it semisimple} is understood to mean that if $\rho_\ell(Gal(K^{sep}/K))\cap \hG$ is contained in a parabolic subgroup $P \subset \hG$, then it is contained in a Levi subgroup of $P$.

If $\nu:   G \ra \Gm$ is an algebraic character, with $\Gm$ here designating the split $1$-dimensional torus over $K$, the theory of $L$-groups provides a dual character $\hat{\nu}:  \Gm \ra \hG \subset {}^LG$.  If $Z$ is the center of $G$, and if $c:  \Gm \ra Z \subset G$ is a homomorphism, then the theory of $L$-groups provides an algebraic character ${}^Lc:  {}^LG \ra \Gm$.

We state Lafforgue's theorem for general reductive groups over general global function fields, but we will mainly be
applying it to $K = k(\PP^1)$.  In what follows, a representation of $G$ over a local field $F$ is ``unramified" if it contains a vector invariant under a hyperspecial maximal compact subgroup, in which case the group itself is assumed to be unramified over $F$.
\vskip 5pt

\begin{thm}\label{paramVL} Let $K$ be any global function field and $G$ a connected reductive algebraic group over $K$.
\vskip 5pt

 (i) \cite{Laf18}[Th\'eor\`eme 0.1]    There is a map
$$\CL:  \CA_0(G) \ra \CG^{ss}(G)$$
with the following property:  if $v$ is a place of $K$ and $\Pi \in \CA_0(G)$ is a cuspidal automorphic representation such that $\Pi_v$ is unramified, then $\CL(\Pi)$ is unramified at $v$, and
$\CL^{ss}(\Pi) ~|_{W_{K_v}}$ is the Satake parameter of $\Pi_v$.\footnote{Here and below we will mainly refer to the restriction of a global Galois parameter to the local Weil group, rather than to the local Galois group, because the unramified Langlands correspondence relates spherical representations to unramified homomorphisms of the local Weil group to the $L$-group.   But the difference is inessential.}

(ii)  Suppose  $\nu:   G \ra \Gm$ is an algebraic character and
$$\chi:  \ad_K^{\times}/K^\times  \ra \Qlb^\times = GL(1,\Qlb)$$ is a continuous character of finite order.  For any $\Pi \in  \CG_0(G)$, let $\Pi\otimes \chi \circ \nu$ denote the twist of $\Pi$ by the character $\chi\circ\nu$ of $G(\ad)$. Then 
$$\CL(\Pi\otimes \chi\circ\nu) = \CL(\Pi)\cdot \hat{\nu}\circ \hat{\chi}$$
where $\hat{\nu}$ is as above and where $\hat{\chi}: Gal(K^{sep}/K)^{ab} \ra GL(1,\Qlb)$ is the character corresponding to $\chi$ by local class field theory. 

(iii) \cite{GLa}[Th\'eor\`eme 0.1]  Let  $v$ be a place of of $K$ and let $F = K_v$.  Then the semisimplification of the restriction of $\CL(\Pi)$ to $W_{K_v}$ depends only on $F$ and the local component $\Pi_v$ of $\Pi$, which is an irreducible admissible representation of $G(F)$, and not on the rest of the automorphic representation $\Pi$, nor on the global field $K$.   We denote this parameter $\CL^{ss}(\Pi_v)$.
\end{thm}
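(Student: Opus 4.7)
The plan is to follow V. Lafforgue's framework for (i) and (ii), and its local refinement by Genestier-Lafforgue for (iii). The central tool is the moduli stack $\mathrm{Cht}_{G,I,W}$ of $G$-shtukas over $Y$ with legs indexed by a finite set $I$ and coefficient sheaf attached to a representation $W$ of $\hG^I$. The space of cuspidal automorphic functions with fixed central character embeds into the middle-degree $\hG$-invariant cohomology of these stacks, and on this space one defines a commuting family of \emph{excursion operators} $S_{I,W,x,\xi,(\gamma_i)}$, parametrized by vectors $x \in W$, $\xi \in W^{\vee}$, and Galois elements $(\gamma_i) \in \mathrm{Gal}(K^{sep}/K)^I$, by composing the creation and annihilation morphisms built from $x$ and $\xi$ with the Galois action on the legs through partial Frobenii.

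For part (i) I would verify that these operators satisfy the functional relations defining a $\hG$-valued pseudocharacter of $\mathrm{Gal}(K^{sep}/K)$, and invoke the reconstruction theorem identifying such pseudocharacters with conjugacy classes of semisimple homomorphisms $\mathrm{Gal}(K^{sep}/K) \ra {}^L G(\Qlb)$. Decomposing the cusp form space into generalized eigenspaces under the resulting commutative algebra attaches to each $\Pi \in \CA_0(G)$ a parameter $\CL(\Pi)$. At an unramified place $v$, the excursion operator with $I = \{1\}$, $W = V$, and $\gamma_1 = \mathrm{Frob}_v$ acts on the spherical vector of $\Pi_v$ by the Satake eigenvalue $\mathrm{tr}(V(s_{\Pi_v}))$, so $\CL(\Pi)|_{W_{K_v}}$ coincides with the Satake parameter.

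For part (ii) I would observe that twisting $\Pi$ by $\chi \circ \nu$ multiplies each spherical Hecke eigenvalue $T_{V,v}$ by the scalar $V(\hat\nu(\hat\chi(\mathrm{Frob}_v)))$, and that this multiplicativity propagates through the excursion operator formalism in a $\hG$-equivariant way, twisting the resulting global parameter by $\hat\nu \circ \hat\chi$. Given (i), this reduces to a formal compatibility verification.

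Part (iii) is the main obstacle. The plan is to introduce a local moduli of restricted shtukas at $v$ together with its local Hecke stack, and, via a Beilinson-Drinfeld type uniformization, to construct a local excursion algebra acting directly on the smooth representation $\pi = \Pi_v$. One then matches the global and local excursion actions on the $\pi$-isotypic component of the cusp form space, showing that $\CL(\Pi)^{ss}|_{W_{K_v}}$ is already determined by $\pi$. The delicate point, where the genuinely new input of Genestier-Lafforgue enters, is that the local excursion operators are defined on \emph{every} irreducible smooth $\pi$, not only those arising from a global cuspidal $\Pi$, and that they assemble into a well-defined semisimple local parameter; this requires controlling the cohomology of local restricted shtukas and working through the Bernstein centre. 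Independence of the global choice is then checked by globalizing a given $\pi$ (using Poincar\'e series as later in the paper) and comparing the two constructions, completing the proof.
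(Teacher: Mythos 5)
This theorem is not proved in the paper: it is quoted verbatim as prior work, with parts (i) and (ii) attributed to \cite{Laf18} (Th\'eor\`eme 0.1 and, for the twisting statement, essentially Proposition 12.5) and part (iii) to \cite{GLa} (Th\'eor\`eme 0.1). There is therefore no internal proof for your attempt to be compared against. What you have written is a reasonable high-level summary of how those source papers establish the results: the moduli of $G$-shtukas, excursion operators, the pseudocharacter-to-parameter reconstruction, Satake compatibility at unramified places, and, for (iii), Genestier--Lafforgue's theory of restricted shtukas and the resulting local excursion algebra acting through the Bernstein centre. As a sketch of the cited works your account captures the main structural ideas, and the level of detail is appropriate for a r\'esum\'e rather than a full proof.

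One caveat worth flagging: in your discussion of part (iii) you describe a ``Beilinson--Drinfeld type uniformization'' and suggest that independence of the global choice is then ``checked by globalizing a given $\pi$ using Poincar\'e series as later in the paper.'' That is not quite how \cite{GLa} argues. Their local parametrization is built intrinsically from the cohomology of local restricted shtukas and the Bernstein centre, and the compatibility with the global parametrization is established inside that formalism (by comparing local and global excursion operators), not by reducing to a globalization theorem of Poincar\'e-series type as used elsewhere in this paper for supercuspidal representations. If you intend to cite \cite{GLa} for (iii), you should phrase the independence argument the way they do, rather than route it through a globalization that the theorem in fact makes unnecessary.
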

\vskip 5pt

\subsection{The local results of Genestier-Lafforgue}
Now let $F$ be a non-archimedean local field of characteristic $p$, so that  $F \isoarrow k'(\!(t)\!)$ for some finite extension $k'$ of $\Fp$.  Let $\CA(G,F)$ denote the set of equivalence classes of  irreducible admissible  representations of $G(F)$, and let 
$\CG^{ss}(G,F)$ denote the set of equivalence classes of semisimple $\ell$-adic homomorphisms
$$\rho:  W_F \ra {}^LG.$$
By (iii) above, we thus obtain a (semisimple) parametrization of $\CA(G,F)$:
\begin{equation}\label{localparam}
\CL^{ss}:  \CA(G,F)   \ra \CG^{ss}(G,F).
\end{equation}
Of course, the theorem quoted above only constructs $\CL^{ss}(\pi)$ when $\pi$ can be realized as the local component $\Pi_v$ of a global cuspidal automorphic representation $\Pi$, but the statement of Th\'eor\`eme 0.1 of \cite{GLa} includes the extension to a (semisimple) parametrization of all members of $\CA(G,F)$.   Theorem \ref{paramVL} continues:

\begin{thm}[\cite{GLa},Th\'eor\`eme 0.1]\label{parab}
(iv)  The local parametrization $\CL^{ss}$ is compatible with parabolic induction in the following sense:  Let $P \subset G$
be an $F$-rational parabolic subgroup with Levi factor $M$ and let $\sigma \in \CA(M,F)$.  If $\pi \in \CA(G,F)$ is an irreducible constituent of  $Ind_{P(F)}^{G(F)} \sigma$ (normalized induction) and $i_M:  {}^LM \ra {}^LG$ is the inclusion of the $L$-group of $M$
as a Levi factor  of a parabolic subgroup in the $L$-group of $G$, then 
$$\CL^{ss}(\pi) = i_M\circ\CL^{ss}(\sigma).$$
\vskip 5pt

(v) In particular, suppose $G$ is quasi-split over $F$ with Borel subgroup $B$, $T \subset B$ is a maximal torus, $I \subset G(F)$ is an Iwahori subgroup, and $I(p) \subset I$ is its maximal pro-$p$ subgroup.  Suppose $\pi^{I(p)} \neq 0$.  Then $\CL^{ss}(\pi)$ takes values in the $L$-group of $T$.
\vskip 5pt

(vi) The local and global parametrizations $\CL$ and $\CL^{ss}$ are compatible with homomorphisms
$$\Upsilon:  G_1 \ra G_2$$
of reductive groups where  the image of $\Upsilon$ is a normal subgroup, in the following sense.
Let ${}^L\Upsilon:  {^L}G_2 \ra {^L}G_1$ be the homomorphism of L-groups corresponding to $\Upsilon$.  Then for any cuspidal automorphic (resp. admissible irreducible) representation $\pi$ of $G_2$ over $K$ (resp. over $F$), 
$$\CL(\pi \circ \Upsilon) = {}^L\Upsilon\circ \CL(\pi)$$
(resp.
 $$\CL^{ss}(\pi \circ \Upsilon) = {}^L\Upsilon\circ \CL^{ss}(\pi).$$   
 This applies in particular if $\Upsilon$ is an isogeny of  semisimple groups.
\end{thm}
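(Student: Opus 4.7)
The plan is to prove (iv) first, deduce (v) as an immediate corollary, and then handle (vi) by a parallel but independent globalization argument.

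For (iv), the strategy is to lift the local compatibility to a global statement where Theorem \ref{paramVL} applies.  First globalize $\sigma$:  choose a smooth projective curve $Y$ over a finite field together with a place $v$ so that $K_v = F$, and construct a cuspidal automorphic representation $\Sigma$ of $M(\ad_K)$ with $\Sigma_v \cong \sigma$, unramified at almost all other places and with prescribed behavior at the remaining finitely many (this step uses a Poincar\'e series construction of the kind developed in \S \ref{S:open}, applied to $M$).  V. Lafforgue's global parameter $\CL(\Sigma) : Gal(K^{sep}/K) \to {}^LM$ then yields, by composition with $i_M$, a global parameter into ${}^LG$.  Because the Eisenstein series $E(\Sigma)$ is not cuspidal, one cannot simply invoke Theorem \ref{paramVL}(i) for it; instead, at each place $w$ where $\pi_w$ and $\Sigma_w$ are unramified, the compatibility $\CL^{ss}(Ind_P^G \Sigma_w) = i_M \circ \CL^{ss}(\Sigma_w)$ is a direct consequence of the transitivity of the Satake isomorphism under parabolic induction.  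A Chebotarev density argument, together with the local nature of $\CL^{ss}$ established in Theorem \ref{paramVL}(iii), then forces the desired identity at the single place $v$.

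Part (v) is a direct consequence.  If $\pi^{I(p)} \neq 0$, then by standard structure theorems (Borel--Casselman for depth zero, extended by Roche and others in general), $\pi$ is an irreducible subquotient of $Ind_B^G \chi$ for some smooth character $\chi$ of $T(F)$.  Applying (iv) with $M = T$ gives $\CL^{ss}(\pi) = i_T \circ \CL^{ss}(\chi)$, which by construction factors through the $L$-group of $T$.

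For (vi), assume first that $G_1$ and $G_2$ are defined over a global field $K$ and that $\pi$ is a cuspidal automorphic representation of $G_2(\ad_K)$.  The hypothesis that the image of $\Upsilon$ is normal in $G_2$ implies, via a Clifford-theoretic argument, that the pullback $\pi \circ \Upsilon$ decomposes as a finite direct sum of cuspidal automorphic representations of $G_1(\ad_K)$, since no new Eisenstein contributions can appear along the restriction.  For any cuspidal constituent $\pi'$, Theorem \ref{paramVL}(i) attaches a parameter $\CL(\pi')$.  At a place $w$ where all data are unramified, the equality $\CL(\pi')\big|_{W_{K_w}} = {}^L\Upsilon \circ \CL(\pi)\big|_{W_{K_w}}$ is a routine compatibility of the Satake isomorphism with the functorial map ${}^L\Upsilon$ on the dual side.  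Chebotarev density combined with the semisimplicity of the parameters then yields the global identity, and Theorem \ref{paramVL}(iii) localizes the statement at any prescribed $v$.  The purely local assertion follows by globalizing $\pi$ through a Poincar\'e series as in \S \ref{S:open}.

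The main obstacle, I expect, is the rigidity step that transfers equality of Satake parameters at unramified places to equality of global (or local) parameters.  For semisimple parameters, Chebotarev handles this globally, but passing to the local statement via Theorem \ref{paramVL}(iii) requires the existence of a cuspidal automorphic globalization with prescribed ramified local component and controlled behavior elsewhere --- the nontrivial input being the Poincar\'e-series construction.  A secondary subtlety in (iv) is that $Ind_P^G \sigma$ may be reducible, so one must verify that the semisimple parameter is constant across its irreducible subquotients; this is consistent with the fact that the Genestier--Lafforgue construction sees only the semisimple data attached to $\pi$.
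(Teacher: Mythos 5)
The paper does not prove (iv) or the local case of (vi) at all — it \emph{cites} them.  Theorem~\ref{parab} is stated as a theorem of Genestier--Lafforgue; the proof in the paper consists of one line deducing (v) from (iv) (namely: $\pi^{I(p)}\neq 0$ means $\pi$ has an $I$-eigenvector, hence is a constituent of a principal series, hence (iv) applies with $M=T$ — which is exactly your argument for (v)), and then the remark that (vi) in the local setting is the last assertion of \cite[Th\'eor\`eme 0.1]{GLa}, and in the global setting is \cite[Proposition 12.5]{Laf18}.  So your deduction of (v) from (iv) matches the paper exactly, but your attempted proofs of (iv) and (vi) are a genuinely different, far more ambitious route, since you are trying to reprove the cited results rather than use them.

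Your proof of (iv) has a real gap precisely at the step you flag as the ``main obstacle.''  To run the Chebotarev argument you need two global Galois parameters to compare.  On one side you have $i_M\circ\CL(\Sigma)$.  On the other side you would need a global parameter whose restriction to $W_{K_v}$ recovers $\CL^{ss}(\pi)$ — but the only mechanism in the paper for producing such a thing is Theorem~\ref{paramVL}(iii), which requires a \emph{cuspidal} automorphic representation $\Pi$ of $G$ with $\Pi_v\cong\pi$.  The Eisenstein series $E(\Sigma)$ is not cuspidal, as you note, and a constituent $\pi$ of $Ind_{P(F)}^{G(F)}\sigma$ is generically not discrete-series, so the Poincar\'e-series machinery of \S\ref{S:open} (which is built around supercuspidal local components) does not produce a cuspidal globalization with controlled parameter.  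Checking Satake compatibility at unramified places is fine, but without a second global parameter in hand Chebotarev has nothing to compare.  The actual content of (iv) in \cite{GLa} is a \emph{local} statement about how the excursion-operator construction of $\CL^{ss}$ on restricted shtukas interacts with parabolic induction; it is proved by direct local/geometric means and is not recoverable from the global parametrization alone.  The same concern applies to the final sentence of your argument for (vi): globalizing a general admissible irreducible $\pi$ (not just a supercuspidal or discrete series) through a Poincar\'e series, in a way that is simultaneously compatible with the globalization of $\pi\circ\Upsilon$, is not something the paper's \S\ref{S:open} provides.  The global half of (vi), via Clifford theory and Chebotarev, is a reasonable sketch of \cite[Prop.~12.5]{Laf18}, but the passage to the local statement is again the content of \cite{GLa} rather than a corollary of the global result.
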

\begin{proof}  In the situation of (v), the condition $\pi^{I(p)} \neq 0$ implies that $\pi$ contains an eigenvector for $I$, and thus
is a constituent of a principal series representation. The assertion (v) then follows from (iv).  Assertion (vi) in the local setting is the last assertion of \cite[Th\'eor\`eme 0.1]{GLa}; in the global setting it is \cite[Proposition 12.5]{Laf18}.
\end{proof}

\begin{cor}\label{locsys}  Let $\Pi \in \CA_0(G)$, and suppose $|X| \subset Y$ is the largest closed subset such that $\Pi_v$ is unramified for $v \notin |X|$.   

(i) The parametrization of Theorem \ref{paramVL} defines a symmetric monoidal functor from
the category $Rep({}^LG)$ of finite-dimensional algebraic representations of ${}^LG$ to the category of completely reducible local systems on $Y \setminus |X|$:  to any $\tau:  {}^LG \ra GL(N)$ one attaches the $N$-dimensional local system attached to the
representation
$$\CV(\Pi,\tau) := \tau \circ \CL(\Pi)$$
of $\pi_1(Y \setminus |X|)$ (any base point).
\vskip 5pt

(ii)  Similarly, let $\sigma \in \CA(G,F)$.   The local parametrization $\CL^{ss}$ of \eqref{localparam} defines a symmetric monoidal functor from
the category $Rep({}^LG)$ of finite-dimensional algebraic representations of ${}^LG$ to the category of completely reducible
representations of $W_F$:
$$\tau \mapsto \CL^{ss}(\sigma,\tau) := \tau\circ \CL^{ss}(\sigma).$$
\vskip 5pt

(iii) Suppose $\Pi$ has the property that $\CV(\Pi,\tau)$ is abelian and semisimple for some faithful representation $\tau$.  Then the image of $\CL(\Pi)$, intersected with $\hG$, lies in a maximal torus of $\hG$. 
\end{cor}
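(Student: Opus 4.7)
The plan is to reduce the assertion to the standard fact that an abelian group of semisimple elements in a connected reductive group lies in a maximal torus. Set $H := \CL(\Pi)(\mathrm{Gal}(K^{sep}/K)) \cap \hG$; I will verify that $H$ is abelian and consists of semisimple elements of $\hG$, and then invoke the structural statement.

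First, because $\tau$ is a faithful algebraic representation, it realizes ${}^LG$ as a closed algebraic subgroup of $GL(N)$. Consequently, the abelianness of $\tau\circ\CL(\Pi)(\mathrm{Gal}(K^{sep}/K))$ descends directly to $\CL(\Pi)(\mathrm{Gal}(K^{sep}/K))$, and in particular $H$ is abelian. Next, over the algebraically closed field $\Qlb$, an abelian completely reducible representation splits as a direct sum of characters by Schur's lemma, so $\tau\circ\CL(\Pi)$ takes values in diagonalizable, hence semisimple, matrices of $GL(N)$. Since $\tau$ is a closed embedding of linear algebraic groups, it preserves Jordan decomposition; thus any $h \in H$ with $\tau(h)$ semisimple in $GL(N)$ must itself be semisimple in $\hG$.

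To finish, the Zariski closure $\overline{H}$ is a commutative linear algebraic subgroup of $\hG$; its Zariski-dense subset $H$ is contained in the (closed) semisimple part of $\overline{H}$ arising from the standard decomposition of a commutative linear algebraic group into its semisimple and unipotent parts, so $\overline{H}$ itself is diagonalizable and therefore contained in a maximal torus of $\hG$. The only substantive input is the faithfulness of $\tau$, used precisely to transfer semisimplicity from elements of $\tau(H) \subset GL(N)$ back to elements of $H \subset \hG$; the remaining ingredients are formal algebraic group theory and should present no obstacle.
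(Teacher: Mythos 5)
The paper's own proof of this corollary addresses only parts (i) and (ii), dispatching them as ``the standard characterization of $\hG$-local systems'' and ``obvious'' respectively; part (iii) is left unproved. Your proposal therefore fills a genuine gap, and most of it is sound: faithfulness of $\tau$ lets you descend abelianness of $\tau\circ\CL(\Pi)$ to $H$, semisimplicity of the representation plus Schur's lemma over $\Qlb$ makes each $\tau(h)$ diagonalizable, morphisms of algebraic groups preserve Jordan decomposition (and for a closed immersion this is an ``if and only if''), and the argument that the Zariski closure $\overline{H}$ is diagonalizable via the $\overline{H}=\overline{H}_s\times\overline{H}_u$ decomposition of a commutative algebraic group is correct.

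The gap is in the final step. It is \emph{not} a standard fact that a diagonalizable subgroup of a connected reductive group lies in a maximal torus; this is true for a torus (connected diagonalizable group), but it can fail for disconnected diagonalizable groups. The textbook counterexample sits inside $\hG = \mathrm{PGL}_2$: the images of $\left(\begin{smallmatrix}0&1\\1&0\end{smallmatrix}\right)$ and $\left(\begin{smallmatrix}1&0\\0&-1\end{smallmatrix}\right)$ generate a Klein four group of commuting semisimple elements, yet a maximal torus of $\mathrm{PGL}_2$ is a $\Gm$ containing only a single involution, so this subgroup is not contained in any maximal torus. Thus ``diagonalizable $\Rightarrow$ contained in a maximal torus'' needs extra hypotheses (for instance $\overline{H}$ connected, or the derived group of $\hG$ simply connected so that centralizers of semisimple elements are connected). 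As written, your proof establishes that $\overline{H}$ is a diagonalizable subgroup of $\hG$, which is weaker than the stated conclusion; to close the gap you would need either to impose and justify one of those additional conditions, or to use some feature of the specific Galois-theoretic/automorphic setting that rules out the disconnected counterexample.
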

\begin{proof}  The first assertion is the standard  characterization of $\hG$-local systems as symmetric monoidal functors; the second assertion is obvious.
\end{proof}
\vskip 10pt

We conclude this section with the following definition.
\vskip 5pt

\begin{defn}  With notation as in Theorem \ref{parab}, let $\sigma \in \CA(G,F)$.  We say $\sigma$ is {\bf pure} if, for some
(equivalently, for all) faithful representations $\tau:  {}^LG \ra GL(N)$, $\CL^{ss}(\sigma,\tau)$ has the property that, for 
any Frobenius element $Frob_F \in W_F$, the eigenvalues of $\CL^{ss}(\sigma,\tau)(Frob)$ are all Weil $q$-numbers of the same 
weight.
\end{defn}

\vskip 10pt

\section{Pure Weil-Deligne parameters}\label{pureWD}

This section is devoted to the proof of Theorem \ref{T:Lparameter}. We shall begin with some background material on tempered L--parameters or their equivalent incarnation as pure Weil-Deligne parameters.

\vskip 5pt

\subsection{Tempered parameters and pure Weil-Deligne parameters}

Let $F$ be a non-archimedean local field. 
\vskip 5pt

\begin{defn}
 A Weil-Deligne parameter for $F$ with values in ${}^LG(C)$ is a pair $(\rho,N)$ where 
$\rho:  W_F \ra {}^LG(C)$ is a continuous homomorphism (for the discrete topology on $C$) and $N \in Lie(G)(C)$ is a nilpotent element such that
$(Ad\circ\rho,N)$ is a homomorphism from the Weil-Deligne group of $F$ to $Aut(Lie(G))(C)$.  
\end{defn}

Without loss of generality we may assume $C = \CC$.  There is a simple recipe for converting a Weil-Deligne parameter $(\rho,N)$ for $F$ with values
in ${}^LG(\CC)$ to an L-parameter
$$\varphi = \varphi_{\rho,N}:  W_F \times SL(2,\CC) \ra {}^LG(\CC),$$
which is algebraic (over $\RR$) in the second factor, and vice versa.  This recipe  (and the proof that it gives an equivalence between the two notions)  is given in \cite[\S 2.1 and Prop. 2.2]{GR}.



We define a couple of properties of Weil-Deligne parameters.
\vskip 5pt

\begin{defn}  The Weil-Deligne parameter $(\rho,N)$ is {\rm tempered} if the restriction of $\varphi_{\rho,N}$ to $W_F \times SU(2)$ has bounded image after projection to $\hat{G}(\CC)$.
The parameter $(\rho,N)$ is {\rm essentially tempered} if its image in $\hat{G}^{ad}$ is bounded.  
\end{defn}

\vskip 5pt

\begin{defn}\label{WDpure}  (a) The Weil-Deligne representation $(\rho,N)$ with values in $GL(V)$ is pure if there is a complex number $t$ such that
\begin{itemize}
\item[(i)]  The eigenvalues of $\rho_t(Frob_F) := \rho(Frob_F)q^t$  are all $q$-numbers of integer weight.
\item[(ii)]  The subspace $W_aV \subset V$ of eigenvectors for $\rho_t(Frob_F)$ with eigenvalues of weight $\leq a$ is invariant
under $(\rho,N)$;
\item[(iii)]  Letting $gr_aV = W_aV/W_{a-1}V$, there is an integer $w$ such that, for all $i \geq 0$, the map
$$N:  gr_{w-i}V \ra gr_{w+i}V$$
is an isomorphism.
\vskip 5pt

\noindent The integer $w$ in (iii) is then called the weight of $(\rho,N)$ (twisted by $t$).
\end{itemize}
\vskip 5pt

(b)  Let $G$ be a connected reductive group over $F$.  The Weil-Deligne parameter $(\rho,N)$ with values in ${}^LG(\CC)$
is pure if $(\sigma\circ\rho,N)$ is pure of some weight for some (equivalently every) faithful representation $\sigma$ of ${}^LG$.
\end{defn}

In particular, if $N = 0$, then the Weil-Deligne parameter $(\rho,N)$ is pure if and only if the Weil group parameter $\rho$ is pure (in the usual sense) of some weight
(up to twist by a power of the norm).   We distinguish the two notions of purity by referring to ``pure Weil-Deligne parameters" and ``pure Weil parameters", or ``pure semi-simple
parameters," respectively.  In \cite{TY}, pure Weil pure parameters are called  {\it strictly pure}.

\begin{remark}\label{ipure}  The local parameters $\CL^{ss}(\pi)$ attached by Genestier and Lafforgue are $\ell$-adic.  In order to define what it means for them to be pure
(as Weil or Weil-Deligne parameters), one usually chooses an isomorphism $\iota:  \Qlb \isoarrow \CC$ (or more sensibly, an isomorphism between the algebraic
closures of $\QQ$ in the two fields) and declares them to be $\iota$-pure if they are pure in the above
sense after composing with $\iota$.   For this purpose one would want to replace the complex number $t$ in Definition \ref{WDpure} by an $\ell$-adic number.  In practice,
the parameters that arise in the automorphic theory are $\iota$-pure for every $\iota$.
\end{remark}

\vskip 5pt

The following facts are well-known and in any case are easy to verify.
\vskip 5pt

\begin{lemma}   \label{L:purecompletion}
 (a)  The Weil-Deligne parameter $(\rho,N)$ is pure if and only if $\varphi_{\rho, N}$  is essentially tempered.
\vskip 5pt

(b)  Let $\rho:  W_F \ra {}^LG(C)$ be a semisimple parameter.  There is at most one way to complete $\rho$ to a  pure Weil-Deligne parameter $(\rho,N)$.  If it exists, we call it the pure completion of $\rho$. Equivalently, there is at most one essentially tame L-parameter $\varphi$ whose associated semisimple parameter (via (\ref{E:ssL})) is $\rho$.

\end{lemma}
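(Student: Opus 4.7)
The proof reduces both parts to the case ${}^LG = GL(V)$ by fixing a faithful algebraic representation $\tau: {}^LG \to GL(V)$: both purity of $(\rho, N)$ and essential temperedness of $\varphi_{\rho, N}$ are defined as properties that hold after composition with any such $\tau$, so the $GL(V)$-case implies the general case.

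For part (a), the Gross--Reeder recipe \cite{GR} takes the form $\varphi_{\rho, N}(w, g) = \rho(w) \sigma_N\bigl( g \cdot \mathrm{diag}(|w|^{1/2}, |w|^{-1/2}) \bigr)$, where $\sigma_N: SL_2 \to GL(V)$ is the algebraic homomorphism produced from $N$ by Jacobson--Morozov. Because $\sigma_N$ is algebraic and $SU_2$ is compact, the image of $\varphi_{\rho, N}|_{W_F \times SU_2}$ in $PGL(V)$ is bounded iff $\rho$ has bounded image in $PGL(V)$, which for a semisimple $\rho$ amounts to saying that all eigenvalues of $\rho(\mathrm{Frob}_F)$ agree up to units with a common scalar $\lambda = q^{-t}$. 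Combined with the weight grading provided by $\sigma_N$, this gives conditions (i) and (ii) of Definition \ref{WDpure}; condition (iii) is then the Jacobson--Morozov characterization of $N$ as the raising operator of an $\mathfrak{sl}_2$-triple whose semisimple element grades $V$ by the weight filtration.

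For part (b), suppose a semisimple $\rho$ admits a pure completion $(\rho, N)$. Using a faithful $\tau$ and the twist $q^t$ from Definition \ref{WDpure}, decompose $V$ into generalized eigenspaces of $\rho_t(\mathrm{Frob}_F)$; these are the weight spaces $V_a$. The monodromy relation $\rho(w) N \rho(w)^{-1} = |w| N$ implies that $N$ commutes with $\rho(I_F)$ and maps $V_a$ into $V_{a-2}$, and condition (iii) of purity requires $N^i: V_{w+i} \to V_{w-i}$ to be an isomorphism for every $i \geq 0$. These are exactly the properties saying that $(N, h, N^-)$, with $h$ acting on $V_a$ by $a-w$, is an $\mathfrak{sl}_2$-triple whose associated graded $\mathfrak{sl}_2$-representation on $V$ has the specific irreducible structure dictated by the dimensions $\dim V_a$. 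Since the isomorphism class of a graded $\mathfrak{sl}_2$-representation is determined by these dimensions, any two nilpotents $N, N'$ on the same graded vector space satisfying the above are conjugate by an element of $\prod_a GL(V_a)$. Refining the Frobenius eigenspaces by the $\rho(I_F)$-isotypic decomposition of $V$ (available precisely because $\rho$ is semisimple), we arrange the conjugating element to commute with $\rho(I_F)$ as well; hence it lies in $Z_{GL(V)}(\rho)$, and $(\rho, N)$ and $(\rho, N')$ are equivalent Weil--Deligne parameters.

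The delicate step is this last one --- ensuring the conjugating element can be chosen in the full centralizer of $\rho$, not only of $\rho(\mathrm{Frob}_F)$. The $\rho(I_F)$-isotypic refinement handles it, because $N$ (commuting with $\rho(I_F)$) preserves each isotypic component and its induced weight grading, so the $\mathfrak{sl}_2$-rigidity is applied component by component. The equivalent statement in terms of essentially tempered L-parameters then follows from part (a) together with the relation (\ref{E:ssL}).
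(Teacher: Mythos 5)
Your argument for part (a) contains a genuine error at its crux. You claim that, since $\sigma_N$ is algebraic and $SU(2)$ compact, $\varphi_{\rho,N}|_{W_F\times SU(2)}$ has bounded image in $PGL(V)$ if and only if $\rho$ does. This is false, and the Steinberg parameter is a counterexample: for $GL_2$ with $\rho(\mathrm{Frob}_F)=\mathrm{diag}(q^{-1/2},q^{1/2})$ and $N=\left(\begin{smallmatrix}0&1\\0&0\end{smallmatrix}\right)$, the L-parameter $\varphi_{\rho,N}$ is trivial on $W_F$ and the identity on $SL_2$, hence tempered, while $\rho(\mathrm{Frob}_F)^n$ is visibly unbounded in $PGL_2$. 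The flaw is visible in your own formula: the factor $\sigma_N(\mathrm{diag}(|w|^{1/2},|w|^{-1/2}))$ ranges over a one-parameter torus as $w$ ranges over $W_F$ and is unbounded whenever $N\neq 0$, so compactness of $SU(2)$ does not help. Essential temperedness is a joint condition — boundedness of $\rho(\mathrm{Frob}_F)\cdot\sigma_N(\mathrm{diag}(q^{1/2},q^{-1/2}))$ — and this is precisely what conditions (i)--(iii) of Definition \ref{WDpure} package: the Frobenius eigenvalue of weight $a$ sits on the $\mathfrak{sl}_2$-weight-$(a-w)$ space, so the product has all eigenvalues of the single absolute value $q^{w/2}$. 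Your follow-on assertion that purity amounts to "all eigenvalues of $\rho(\mathrm{Frob}_F)$ agree up to units with a common scalar" directly contradicts Definition \ref{WDpure}(iii), which forces eigenvalues of distinct weights whenever $N\neq 0$; it would make $N=0$ the only possibility.

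Part (b) is closer but stops its refinement one step short. You end up with a conjugator that commutes with $\rho(I_F)$ and preserves the weight decomposition $\bigoplus_a V_a$, and assert it therefore lies in $Z_{GL(V)}(\rho)$. That inference fails: $\rho(\mathrm{Frob}_F)$ may have several distinct eigenvalues of the same absolute value, so an element preserving each $V_a$ and commuting with $\rho(I_F)$ need not commute with $\rho(\mathrm{Frob}_F)$. You must decompose by honest Frobenius eigenvalues (not just their weights) and then by $\rho(I_F)$-isotype. Since $N$ sends the $\lambda$-eigenspace of $\rho(\mathrm{Frob}_F)$ to the $q^{-1}\lambda$-eigenspace, it preserves the "strings" of eigenvalues $(\lambda, q^{-1}\lambda, q^{-2}\lambda,\dots)$; the hard-Lefschetz isomorphism $N^i\colon gr_{w+i}\to gr_{w-i}$ is block-diagonal over these strings and hence restricts to an isomorphism on each, so the graded $\mathfrak{sl}_2$-rigidity applies string by string and produces a conjugator commuting with both $\rho(\mathrm{Frob}_F)$ and $\rho(I_F)$. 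You should also fix the wording "generalized eigenspaces of $\rho_t(\mathrm{Frob}_F)$; these are the weight spaces $V_a$" — those are two different decompositions, with the eigenspaces generically strictly finer than the weight spaces.
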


\subsection{An application of the purity of the monodromy weight filtration}

The parametrization of Genestier-Lafforgue only attaches a semisimple parameter $\CL^{ss}(\pi)$ to a local representation $\pi$.  But when $\pi$ is realized as a local component of a cuspidal automorphic representation $\Pi$ of $G$ over some global function field $K$ -- say $\pi = \Pi_x$ for some place $x$ of $K$ --
then the restriction $\CL(\Pi)_x$ to the decomposition group $\Gamma_x$ at $x$ need not be semisimple.   We let 
\[  \CL_\Pi(\pi) = (\CL^{ss}(\pi),N_\Pi) \]
denote the Weil-Deligne parameter associated to the Galois parameter $\CL(\Pi)_x$.

The following result of Sawin and Templier generalizes Deligne's theorem on the purity of the monodromy weight filtration to $G$-local systems:

\begin{lemma}\label{mwf}\cite[Lemma 11.4]{ST} Let  $X$ be a smooth projective curve over a finite field $k$, with function field $K = k(X)$, and let 
$\rho:  Gal(K^{sep}/K) \ra {}^LG(C)$ be an irreducible homomorphism (with image not contained in a proper parabolic subgroup).  
Let $z$ be any place of $K$, and let $(\rho^{ss}_z,N_z)$ be the Weil-Deligne parameter associated to the restriction $\rho_z$ of $\rho$ to a decomposition group at $z$.  Then $(\rho^{ss}_z,N_z)$ is a pure Weil-Deligne parameter.
\end{lemma}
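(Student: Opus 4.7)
The plan is to reduce to Deligne's classical theorem on the purity of the monodromy weight filtration for lisse $\ell$-adic sheaves on curves over finite fields, as proved in Weil II \cite{De80}, by composing with a faithful finite-dimensional representation of ${}^LG$.

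First, fix a faithful algebraic representation $\sigma: {}^LG \to GL(V)$ and form $\tilde\rho := \sigma \circ \rho$, which corresponds to a lisse $\ell$-adic sheaf $\CF$ on the open curve $U := X \setminus S$, where $S$ is the finite ramification locus of $\rho$. By Definition \ref{WDpure}(b), establishing that $(\rho^{ss}_z, N_z)$ is a pure ${}^LG$-Weil-Deligne parameter is equivalent to showing that the associated $GL(V)$-Weil-Deligne representation $(\tilde\rho^{ss}_z, \sigma_* N_z)$ is pure of some weight in the sense of Definition \ref{WDpure}(a).

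Next, use the hypothesis that the image of $\rho$ is not contained in any proper parabolic: this implies $\tilde\rho$ is a semisimple Galois representation, so it decomposes as $\tilde\rho \cong \bigoplus_i V_i$ with each $V_i$ irreducible, corresponding to an irreducible lisse sheaf on $U$. Since $\rho$ arises from Lafforgue's parametrization, the finite-order central character condition built into $\CA_0(G)$ pins down the determinant of each $V_i$ up to a twist of finite order. Hence L.\ Lafforgue's refinement of Deligne's purity theorem yields that each $V_i$ is $\iota$-pure of some integer weight $w_i$, and then the purity of the monodromy weight filtration \cite[Th\'eor\`eme 1.8.4]{De80} gives that the local Weil-Deligne representation attached to $V_i$ at $z$ is pure of weight $w_i$.

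The main subtlety is then to ensure all the weights $w_i$ coincide, so that condition (iii) of Definition \ref{WDpure}(a) holds with a single integer $w$. This is precisely where the ${}^LG$-irreducibility of $\rho$ is essential: since the image of $\rho$ in $\hG$ is not contained in any parabolic, any cocharacter encoding the weights must factor through a central cocharacter of $\hG$, and faithfulness of $\sigma$ translates this constraint into uniformity of the $w_i$ across the decomposition of $\tilde\rho$. I expect this to be the main obstacle — a direct sum of irreducible pieces of differing weights would fail purity in the strong sense of Definition \ref{WDpure}(a), so the non-parabolicity hypothesis must be exploited in an essential way; alternatively one can package the argument by appealing directly to the $G$-equivariant formulation of the purity theorem.
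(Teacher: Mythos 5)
The paper itself offers no proof of this lemma: it is cited verbatim from \cite[Lemma 11.4]{ST}, with only the short remark that the argument there (stated for split semisimple $G$) carries over to general reductive $G$. So what needs to be assessed is whether your reconstruction is actually a correct argument, and it has a genuine gap precisely at the step you yourself flag as the ``main obstacle.''

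Two concrete problems. First, a minor one: you invoke ``the finite-order central character condition built into $\CA_0(G)$'' to pin down $\det V_i$. But the lemma makes no assumption that $\rho$ arises from Lafforgue's construction; it is a statement about an arbitrary irreducible Galois-to-${}^LG$ homomorphism. You can dispense with this appeal entirely: L.\ Lafforgue's theorem (irreducible lisse sheaves with finite-order determinant are pure of weight $0$), combined with a rank-$1$ twist, already shows that each irreducible $\pi_1$-constituent $V_i$ of $\sigma\circ\rho$ is pure of \emph{some} weight $w_i$, no global automorphic input needed.

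The serious gap is the uniformity of the $w_i$. Your argument is that ``any cocharacter encoding the weights must factor through a central cocharacter of $\hG$,'' but you do not say why, and in fact the statement as phrased does not hold: the cocharacter $\lambda$ of $GL(V)$ acting by $z^{w_i}$ on $V_i$ commutes with the Zariski closure $G_\rho$ of the image of $\rho$, but there is no a priori reason for $\lambda$ to lie in $\hG$, let alone in its center. What actually does the work is a statement about \emph{characters}, not cocharacters: if $G_\rho \subset {}^LG$ is not contained in a proper parabolic, then restriction $X^*({}^LG)_\QQ \to X^*(G_\rho)_\QQ$ is an isomorphism (this goes back to the theory of $G$-irreducible subgroups); in particular when $\hG$ is semisimple, $X^*(G_\rho)$ is finite. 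Applying this to the algebraic characters $\det V_i$ of $G_\rho$ shows each $\det V_i\circ\rho$ is a finite-order character of $\pi_1$, hence each $V_i$ is pure of weight $0$, so the $w_i$ are all equal and $\sigma\circ\rho$ is pure. Only then can one invoke \cite[1.8.4]{De80} for purity of the local Weil-Deligne parameter. Your closing sentence (``alternatively one can package the argument by appealing directly to the $G$-equivariant formulation of the purity theorem'') is an acknowledgment that you do not have a proof of this step; it is not a proof. You should also be aware that for $\hG$ not semisimple the situation is more delicate — the character lattice $X^*(G_\rho)$ is no longer torsion, and the lemma really does require some care about the central piece of $\rho$ — which is presumably what the paper's remark about extending \cite{ST} to general reductive $G$ is glossing over.
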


\begin{remark}  The lemma is stated in \cite{ST} when $G$ is split semisimple but the proof is valid for any reductive $G$, with the appropriate extension of the property
of being contained in a proper parabolic subgroup to disconnected ${}^LG(C)$.  
\end{remark}


\begin{corollary}\label{pureN}  Let $\pi$ be an irreducible admissible representation of $G(F)$.  Suppose 
\vskip 5pt

\begin{itemize}
\item $X$ is a smooth projective curve over a finite field $k$
\item  $z$ is a place of the function field $K = k(X)$ such that $K_z \isoarrow F$.  
\item there is a cuspidal automorphic representation $\Pi$ of $G_K$,
with $\Pi_z \isoarrow \pi$, such that the global parameter $\CL(\Pi)$ is irreducible.
\end{itemize}
Then the Weil-Deligne parameter $\CL_\Pi(\pi)$ is pure.  Moreover, it is the  unique pure completion of the
semisimple parameter $\CL^{ss}(\pi)$.
\end{corollary}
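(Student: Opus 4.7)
The plan is to deduce the corollary as an essentially formal combination of Lemma \ref{mwf} (the Sawin--Templier purity of the monodromy weight filtration for $G$-local systems) and Lemma \ref{L:purecompletion}(b) (uniqueness of the pure completion of a semisimple parameter). No new input is really needed, so the work is only in verifying that the hypotheses of these two lemmas are matched by the data in the corollary.

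First, for purity: the assumption that $\CL(\Pi)$ is irreducible in the sense of having image not contained in any proper parabolic of ${}^LG(C)$ is exactly the hypothesis of Lemma \ref{mwf}. Applying that lemma to the place $z$ of $K$, the Weil-Deligne parameter $(\rho_z^{ss}, N_z)$ attached to the restriction $\CL(\Pi)_z$ of $\CL(\Pi)$ to a decomposition group at $z$ is a pure Weil-Deligne parameter with values in ${}^LG(C)$. By Theorem \ref{paramVL}(iii), the semisimplification $\rho_z^{ss}$ of $\CL(\Pi)_z$ depends only on the local component $\Pi_z \cong \pi$ and equals the Genestier--Lafforgue parameter $\CL^{ss}(\pi)$; hence $(\rho_z^{ss}, N_z)$ coincides with $\CL_\Pi(\pi) = (\CL^{ss}(\pi), N_\Pi)$ in the notation introduced just before the corollary. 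This proves that $\CL_\Pi(\pi)$ is pure.

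Second, for uniqueness: having exhibited a pure Weil-Deligne completion of $\CL^{ss}(\pi)$, Lemma \ref{L:purecompletion}(b) says that any two such completions coincide, so $\CL_\Pi(\pi)$ is the unique pure completion of $\CL^{ss}(\pi)$. In particular, $N_\Pi$ is determined by $\pi$ alone and is independent of the choice of global $\Pi$, $K$, or $X$ used to realize $\pi$. The main (and essentially only) conceptual content of the argument is the invocation of Lemma \ref{mwf}; once that is in hand there is no further obstacle, and in particular no new global or geometric input is required beyond what has already been assembled.
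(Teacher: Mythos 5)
Your argument is correct and is exactly the intended proof: apply Lemma \ref{mwf} to $\CL(\Pi)$ at the place $z$ to conclude that $(\CL^{ss}(\pi), N_\Pi) = \CL_\Pi(\pi)$ is pure, then invoke the uniqueness in Lemma \ref{L:purecompletion}(b). The paper states the corollary without a separate proof precisely because it is this formal combination, so there is nothing further to add.
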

Not every semisimple parameter admits a completion to a pure Weil-Deligne parameter.  Corollary \ref{pureN} asserts that $\CL^{ss}(\pi)$ does admit a pure completion provided
$\pi$ can be realized as a local component of a cuspidal  automorphic representation whose global parameter is irreducible in the indicated sense.
\vskip 10pt

\subsection{Proof of Theorem \ref{T:Lparameter}}

We are now ready to prove Theorem \ref{T:Lparameter}.  In view of Corollary \ref{pureN},  it suffices to construct cuspidal globalizations of any essentially  discrete series representation $\pi$ whose global parameter is irreducible. The following proposition achieves this when $\pi$ is supercuspidal.

    \vskip 5pt

\begin{prop}\label{mixed}  Let $G$ be a reductive group over $F$ and assume that
\begin{itemize}
\item $p > 2$  if $G$ is of type $B_n$ or $C_n$; 
\item $p > 3$  if $G$ is of type $G_2$ or $F_4$; 
\end{itemize}
(in other words, we assume $p$ is a good prime for these groups).   Let $\pi$ be a supercuspidal representation of $G(F)$.  Let $X$ be a smooth projective curve over a finite field $k$
and let $z$ be a place of the function field $K = k(X)$ such that $K_z \isoarrow F$.  Then there exist infinitely many cuspidal automorphic representations
$\Pi$ of $G_K$, such that
\begin{itemize}
\item  $\Pi_z \isoarrow \pi$;
\item  the global parameter $\CL(\Pi)$ is irreducible.  
\end{itemize}
In particular, the semisimple parameter
$\CL^{ss}(\pi)$ admits a completion to a pure Weil-Deligne parameter that is realized as the specialization at $z$ of the global parameter $\CL(\Pi)$, for any such $\Pi$.
\end{prop}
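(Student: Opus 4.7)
The plan is to adapt the Poincar\'e series method of \cite{GLo} to construct a cuspidal globalization $\Pi$ with $\Pi_z \isoarrow \pi$, while simultaneously imposing a local condition at an auxiliary place $w$ that rigidifies the global parameter and forces $\CL(\Pi)$ to be irreducible. Once such a $\Pi$ is produced, Lemma \ref{mwf} and Corollary \ref{pureN} immediately give the pure completion of $\CL^{ss}(\pi)$, completing the proof; the infinitely-many clause will come from independently varying the auxiliary data.

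First, select an auxiliary place $w \neq z$ of $K$ at which $G$ is unramified, together with a depth-zero regular supercuspidal representation $\sigma_w$ of $G(K_w)$ attached (in the DeBacker-Reeder sense) to a pair $(T_w, \theta_w)$ with $T_w$ a maximally unramified elliptic torus and $\theta_w$ a sufficiently generic character. The expected L-parameter of $\sigma_w$ factors through an elliptic maximal torus of $\hat G$ and hence is irreducible: its image lies in no proper parabolic of ${}^LG$. Under the stated hypothesis on $p$, a direct building-theoretic calculation (of the type carried out for depth-zero generic supercuspidals in \S \ref{S:questions}) identifies $\CL^{ss}(\sigma_w)$ with this expected parameter, so $\CL^{ss}(\sigma_w)$ is itself irreducible.

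Next, choose a test function $f = \bigotimes_v f_v \in C_c^\infty(G(\ad_K))$ with $f_z$ a matrix coefficient of $\pi$, $f_w$ a matrix coefficient of $\sigma_w$, and $f_v$ the characteristic function of a hyperspecial maximal compact at almost every other place (with appropriate cutoffs at the finite bad set). The Poincar\'e series $P_f(g) = \sum_{\gamma \in G(K)} f(\gamma g)$ defines a cuspidal automorphic form, cuspidality being forced by the supercuspidal support of $f_z$. A Plancherel/relative-trace computation in the spirit of \cite{GLo} shows that $P_f$ is nonzero for a suitable choice of $f$ and that its spectral decomposition contains an irreducible cuspidal summand $\Pi$ with $\Pi_z \isoarrow \pi$ and $\Pi_w \isoarrow \sigma_w$.

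Irreducibility of $\CL(\Pi)$ now follows from Theorem \ref{parab}(iv): were $\CL(\Pi)$ to take values in a proper parabolic ${}^LP \subsetneq {}^LG$, so would its restriction $\CL(\Pi)|_{W_{K_w}} = \CL^{ss}(\sigma_w)$, contradicting the choice of $\sigma_w$. Infinitely many distinct $\Pi$ then arise by varying $\theta_w$ among generic characters (yielding inequivalent $\sigma_w$ and hence inequivalent $\Pi$) or by twisting by finite-order id\`ele class characters unramified at $z$ via Theorem \ref{paramVL}(ii). The main obstacle I expect is identifying $\CL^{ss}(\sigma_w)$ with the expected irreducible parameter at $w$: this compatibility does not follow from the formal properties of the Genestier-Lafforgue construction and requires an explicit computation at the auxiliary place. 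For depth-zero regular supercuspidals the computation is tractable, reducing largely to the representation theory of finite groups of Lie type, but it is the delicate point of the argument; the secondary obstacle is the Plancherel non-vanishing of the Poincar\'e series with two prescribed supercuspidal local components, which requires extending the one-place analysis of \cite{GLo}.
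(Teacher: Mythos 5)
Your overall strategy — globalize via a Poincar\'e series with $\pi$ at $z$ and a rigidifying supercuspidal at an auxiliary place, then invoke Lemma \ref{mwf}/Corollary \ref{pureN} — is the right one and matches the paper. The structural observation that it suffices to find $\pi'$ at an auxiliary place with $\CL^{ss}(\pi')$ irreducible, and then appeal to \cite{GLo} for the existence of $\Pi$, is exactly the paper's reduction.

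However, the step you yourself flag as ``the delicate point'' is in fact a genuine gap, not merely a delicate computation. You propose taking $\sigma_w$ to be a depth-zero regular (DeBacker--Reeder) supercuspidal and claim that a ``direct building-theoretic calculation'' would identify $\CL^{ss}(\sigma_w)$ with the expected irreducible toral parameter. But this identification is not known to hold in positive characteristic; it is precisely the kind of local compatibility statement for the Genestier--Lafforgue parametrization that has not been established. The paper addresses this explicitly in a remark after Proposition \ref{mixedds}: the earlier work \cite{GLo} did use depth-zero supercuspidals at the auxiliary place and was forced to introduce a hypothesis ``because the irreducibility of the corresponding $L$-parameter has not (yet) been established for local fields of positive characteristic.'' Asserting that this reduces to ``the representation theory of finite groups of Lie type'' does not close the gap: the issue is not computing the expected parameter but proving that $\CL^{ss}$ agrees with it, and no direct access to $\CL^{ss}$ at a single place is available.

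The paper circumvents this by using the Kloosterman automorphic representations $\pi(\phi,\chi)$ of Heinloth--Ng\^o--Yun \cite{HNY} at the auxiliary place. These come equipped with an explicit global $\hat G$-local system $\mathrm{Kl}_{\hat G}(\phi,\chi)$, and \cite[Theorem 2]{HNY} proves (under the stated constraints on $p$) that the local monodromy at $\infty$ is irreducible. Crucially, $\mathrm{Kl}_{\hat G}(\phi,\chi)$ is identified with Lafforgue's global parameter by matching Satake parameters at unramified places, which is a legitimately established compatibility. So irreducibility of $\CL^{ss}(\pi')$ is a theorem rather than a conjecture. You should replace your depth-zero auxiliary representation with the local component at $\infty$ of $\pi(\phi,\chi)$; the rest of your argument (Poincar\'e series nonvanishing, cuspidality, and the deduction of irreducibility via Theorem \ref{parab}(iv)) then goes through as written.
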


\begin{proof}  
 We shall use the results of \cite{GLo} to produce the desired globalizations.
Let $x \neq z$ be any other place of $K$.  Since there are infinitely many such $x$, it suffices to show 
\begin{itemize}
\item[(i)] There is a supercuspidal representation $\pi'$ of $G(K_x)$ such that $\CL^{ss}(\pi')$ is irreducible, and 
\item[(ii)] There is an automorphic representation $\Pi$ of $G_K$ with $\Pi_x \isoarrow \pi'$ and $\Pi_z \isoarrow \pi$, and with $\Pi_y$ tamely 
ramified for all $y \neq x, z$.
\end{itemize}
Once (i) is given, the existence of $\Pi$ as in (ii) is proved in \cite{GLo}  by a Poincar\'e series construction.  It thus remains to produce a supercuspidal representation $\pi'$ as in (i). 
\vskip 5pt

We let $k' = k(x)$ denote the residue field at $x$ and construct the Kloosterman sheaf ${\rm Kl}_{\hat{G}}(\phi,\chi)$ of \cite{HNY} over $\mathbb{P}^1_{k'}$.  Let $\pi'$ be the local component at $\infty$ of the corresponding automorphic
representation $\pi(\phi,\chi)$.  Under the hypotheses on $G$ and $p$, \cite[Theorem 2]{HNY} implies that the restriction to the inertia group at $\infty$
of the local monodromy representation of ${\rm Kl}_{\hat{G}}(\phi,\chi)$ is irreducible.  On the other hand, it follows from construction
that ${\rm Kl}_{\hat{G}}(\phi,\chi)$ is the global parameter attached by \cite{Laf18} to $\pi(\phi,\chi)$.   More precisely, \cite{HNY} computes the local monodromy of ${\rm Kl}_{\hat{G}}(\phi,\chi)$  at unramified places and identifies it with that obtained from the Satake parameters of
$\pi(\phi,\chi)$.  (The construction of $\pi(\phi,\chi)$ is made explicit
 in \cite{Y16} but its existence is implicit in \cite{HNY}.)  It then follows that $\CL^{ss}(\pi')$ is irreducible, as desired.
\end{proof}
\vskip 10pt

For  essentially discrete series but not supercuspidal representations, we have a weaker globalization result which is nonetheless sufficient for our applications.
\vskip 5pt

\begin{prop}\label{mixedds}
Assume the notation and hypotheses of Proposition \ref{mixed} but let $\pi$ be an essentially  discrete series representation. Then there exists a cuspidal automorphic representation $\Pi$ of $G_K$ such that
\begin{itemize}
\item $\CL^{ss}(\Pi_z) = \CL^{ss}(\pi)$;
\item  the global parameter $\CL(\Pi)$ is irreducible.  
\end{itemize}
In particular, the semisimple parameter
$\CL^{ss}(\pi)$ admits a completion to a pure Weil-Deligne parameter $\CL(\pi)$ that is realized as the specialization at $z$ of the global parameter $\CL(\Pi)$, for any such $\Pi$.
\end{prop}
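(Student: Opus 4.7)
The plan is to deduce Proposition~\ref{mixedds} from Proposition~\ref{mixed} applied to a Levi subgroup, exploiting compatibility with parabolic induction. If $\pi$ is an essentially discrete series representation of $G(F)$ that is not supercuspidal, then by the Bernstein--Zelevinsky/Casselman theory of cuspidal support there exist a proper parabolic $P = MN \subset G$ and an essentially supercuspidal $\sigma$ of $M(F)$ such that $\pi$ is an irreducible subquotient of the normalized parabolic induction $\mathrm{Ind}_{P(F)}^{G(F)} \sigma$. By Theorem~\ref{parab}(iv), every irreducible constituent of $\mathrm{Ind}_P^G \sigma$ has the same semisimple Genestier--Lafforgue parameter, namely $i_M \circ \CL^{ss}(\sigma) = \CL^{ss}(\pi)$. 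Thus it suffices to produce a cuspidal automorphic $\Pi$ of $G_K$ such that $\Pi_z$ is some irreducible constituent of $\mathrm{Ind}_{P(K_z)}^{G(K_z)} \sigma$ (so automatically $\CL^{ss}(\Pi_z) = \CL^{ss}(\pi)$) and such that $\CL(\Pi)$ is irreducible in the sense of Lemma~\ref{mwf}.

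The first step is to apply Proposition~\ref{mixed} to $\sigma$, regarded as a supercuspidal representation of the Levi $M$ over $F = K_z$. This produces a cuspidal automorphic representation $\Sigma$ of $M_K$ with $\Sigma_z \cong \sigma$ and with irreducible global parameter $\CL(\Sigma) \colon \mathrm{Gal}(K^{sep}/K) \to {}^LM$. (The good-prime hypotheses pass from $G$ to its Levis.)

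The main obstacle is then to lift $\Sigma$ to a \emph{cuspidal} automorphic representation of $G_K$ whose local component at $z$ lies in the Bernstein block of $(M(K_z),\sigma)$, i.e., is some irreducible constituent of $\mathrm{Ind}_P^G \sigma$. One cannot simply form the global Eisenstein series associated to $\Sigma$, since that is not cuspidal. My approach is to adapt the Poincar\'e series construction of \cite{GLo} used in Proposition~\ref{mixed}: at the place $z$, replace the compactly supported matrix coefficient of a supercuspidal test vector by a function in the Hecke algebra that projects onto the Bernstein block of $(M(K_z),\sigma)$ while still yielding a convergent Poincar\'e series whose spectral expansion isolates cuspidal contributions; at an auxiliary place $x \neq z$, impose that $\Pi_x$ be the Kloosterman-type supercuspidal $\pi(\phi,\chi)$ of \cite{HNY}, whose local semisimple parameter is irreducible. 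The latter choice forces the global parameter $\CL(\Pi)$ to be irreducible, while the former guarantees the correct semisimple parameter at $z$.

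Granting the existence of such a $\Pi$, the conclusion is immediate: by Theorem~\ref{parab}(iv), $\CL^{ss}(\Pi_z) = \CL^{ss}(\pi)$, so Lemma~\ref{mwf} applied to the irreducible global parameter $\CL(\Pi)$ at $z$ shows that the attached Weil--Deligne parameter $\CL_\Pi(\Pi_z)$ is pure; by uniqueness of pure completions (Lemma~\ref{L:purecompletion}(b)), this is the desired pure completion $\CL(\pi)$ of $\CL^{ss}(\pi)$. The hard technical step is the Poincar\'e series construction: ensuring cuspidality of $\Pi$ while prescribing a non-supercuspidal local type at $z$ is not directly handled by the method of \cite{GLo} as used in Proposition~\ref{mixed}, and presumably requires a separate argument — for instance a variant of the simple trace formula that tolerates essentially tempered but non-supercuspidal local data at one place while still extracting cuspidal automorphic contributions on the spectral side.
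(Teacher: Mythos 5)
Your overall framework is sound (globalize with a Kloosterman supercuspidal at an auxiliary place $x$ to force irreducibility of $\CL(\Pi)$, and at $z$ only require that $\Pi_z$ have the same cuspidal support as $\pi$, then conclude via Lemma~\ref{mwf} and Lemma~\ref{L:purecompletion}). However, the central step — producing the cuspidal $\Pi$ — is left as a gap in your proposal, and your primary route to fill it is not what works. You propose modifying the Poincar\'e series of \cite{GLo} by inserting at $z$ a Hecke projector onto the Bernstein block of $(M,\sigma)$; but such a projector is not compactly supported modulo center, so the Poincar\'e sum does not converge and the method of \cite{GLo} breaks down, as you yourself suspect. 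The detour through Proposition~\ref{mixed} applied to the Levi $M$ is also unnecessary: you never use the global $\Sigma$, only the local $\sigma$.

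The paper's actual argument is the ``variant of the simple trace formula'' you gesture at in your last sentence, but it operates directly with a \emph{pseudo-coefficient} $f_\pi$ of the discrete series $\pi$ on $G(F)$ at $z$ (not with the supercuspidal $\sigma$ on the Levi at all). Pseudo-coefficients are compactly supported modulo center, so the Deligne--Kazhdan simple trace formula applies. Lemma~\ref{L:appendix} in the appendix by Beuzart-Plessis then produces a cuspidal $\Pi$ with $\Pi_x \cong \pi'$ (the Kloosterman supercuspidal, contributing a regular elliptic test function so the geometric side is controlled) and $\operatorname{Tr}(\Pi_z)(f_\pi) \neq 0$. The defining property of a pseudo-coefficient forces $\Pi_z$ to have the same cuspidal support as $\pi$, whence $\CL^{ss}(\Pi_z) = \CL^{ss}(\pi)$ by compatibility with parabolic induction. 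Your conclusion step from that point onward matches the paper. So: right target, right endgame, right suspicion that a trace-formula input is needed, but the key construction is missing and your proposed Poincar\'e-series replacement would not converge.
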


\begin{proof}
  Let $f_{\pi}$ be a pseudo-coefficient of $\pi$, whose properties are recalled in \cite[\S 8.1]{GLo}. 
  In \cite[\S 8.2]{GLo}, a version of the simple trace formula was formulated as a working hypothesis.   Applying Lemma \ref{L:appendix} in the appendix by R. Beuzart-Plessis,  it follows as 
 in the proof of  \cite[Proposition 8.2]{GLo} that there is  a cuspidal $\Pi$ with $\Pi_x \isoarrow \pi'$ (with $\pi'$ as in Proposition \ref{mixed}) for some place $x$ and $Tr(\Pi_z)(f_\pi) \neq 0$.
 This implies (by \cite[Lemma 8.1]{GLo}) that $\Pi_z$ has the same cuspidal support as $\pi$, and therefore that
$$\CL^{ss}(\Pi_z) = \CL^{ss}(\pi)$$
since the semisimple Genestier-Lafforgue parameter is compatible with parabolic induction.   Now we do not know, nor care, whether $\Pi_z$ is isomorphic to the discrete series $\pi$ or not.  Since   $\CL^{ss}(\Pi_x) = \CL^{ss}(\pi')$ is irreducible,   so is the global parameter $\CL(\Pi)$ and we have produced the desired $\Pi$. 
 \end{proof}
 \vskip 5pt
 
 Using Corollary \ref{pureN}, Proposition \ref{mixed} and Proposition \ref{mixedds}, the proof of Theorem \ref{T:Lparameter} is now complete.
\vskip 5pt
 
We note that the idea behind Corollary \ref{pureN} and Proposition \ref{mixed} was already applied in the case of classical groups  in the proof of \cite[Proposition 7.3]{GLo}.  Instead of
using the Kloosterman representations of \cite{HNY} to establish irreducibility, \cite{GLo} used depth zero supercuspidal representations, but introduced a hypothesis because the irreducibility of the corresponding L-parameter has not (yet) been established for local fields of positive characteristic.

\begin{cor}\label{weight0}  Under the hypotheses of Theorem \ref{T:Lparameter}, suppose $G$ is semisimple.  Then the composition of
the adjoint representation of ${}^LG$ with the Weil-Deligne parameter $\CL(\pi)$ of $\pi$ is pure of weight $0$.
\end{cor}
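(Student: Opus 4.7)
The plan is to reduce the corollary to the essential temperedness of $\CL(\pi)$ granted by Theorem \ref{T:Lparameter}, combined with the correspondence between tempered L-parameters and pure Weil-Deligne representations recorded in Lemma \ref{L:purecompletion}(a). First, I would view $\CL(\pi)$ as an L-parameter $\varphi: W_F \times SL_2(\CC) \to {}^LG(\CC)$; by Theorem \ref{T:Lparameter}, the image of $W_F \times SU(2)$ under $\varphi$ is bounded in $\hat{G}^{ad}(\CC)$.

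Next, I would invoke the semisimplicity hypothesis on $G$. In that case $\hat{G}$ is semisimple with finite center $Z(\hat{G})$, and the adjoint representation $Ad: \hat{G} \to GL(\mathrm{Lie}(\hat{G}))$ has kernel exactly $Z(\hat{G})$, inducing a closed embedding $\hat{G}^{ad} \hookrightarrow GL(\mathrm{Lie}(\hat{G}))$. Since $G$ is defined over $F$, the extension of $Ad$ to ${}^LG$ has its Galois component factoring through a finite quotient, and therefore has bounded image in $GL(\mathrm{Lie}(\hat{G}))$ as well. Combining these observations, the composition $Ad \circ \varphi$ has bounded image on $W_F \times SU(2)$, i.e., $Ad \circ \varphi$ is a tempered $GL(\mathrm{Lie}(\hat{G}))$-valued L-parameter, not merely essentially tempered.

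Finally, I would apply the standard weight-tracking refinement of Lemma \ref{L:purecompletion}(a): under the recipe $\rho(w) = \varphi(w, \mathrm{diag}(|w|^{1/2}, |w|^{-1/2}))$, a tempered $GL(V)$-valued L-parameter produces a Weil-Deligne representation that is pure of weight exactly $0$. Concretely, on the $SL_2$-weight-$k$ isotypic component of $V$ the eigenvalues of $\rho(\mathrm{Frob}_F)$ acquire absolute value $q^{-k/2}$, while the monodromy $N$ intertwines the $SL_2$-weights $\pm k$, centering the resulting monodromy filtration at weight $0$. Applied to $V = \mathrm{Lie}(\hat{G})$, this yields the desired purity statement for $Ad \circ \CL(\pi)$.

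The argument is essentially a definition chase; the one point requiring attention is distinguishing \emph{essentially tempered} from \emph{tempered}, which is absorbed precisely by the semisimplicity hypothesis on $G$ via the finiteness of $Z(\hat{G})$. I do not foresee any substantive obstacle.
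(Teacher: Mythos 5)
Your argument is correct, but it takes a genuinely different route from the paper's. The paper starts from the fact (via Proposition \ref{mixedds}) that $Ad\circ\CL(\pi)$ is a pure Weil--Deligne representation of \emph{some} weight $w$, and then pins down $w=0$ by a determinant trick: since $\hat{G}$ is semisimple, $\det(Ad)$ is the trivial one-dimensional representation of ${}^LG$, hence $\det(Ad\circ\CL(\pi))$ is pure of weight $0$, forcing $w=0$. You instead upgrade \emph{essentially tempered} to honestly \emph{tempered} after composing with $Ad$, using that $Ad$ factors through $\hat{G}^{ad}$ (and the Galois component acts through a finite quotient), and then invoke the general fact that a tempered $GL(V)$-valued parameter yields a Weil--Deligne representation pure of weight exactly $0$; semisimplicity enters for you through the closed embedding $\hat{G}^{ad}\hookrightarrow GL(\mathrm{Lie}\,\hat{G})$. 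Both arguments are sound and short. The paper's determinant trick is a one-liner once purity of some weight is known, and generalizes immediately to any faithful representation with unitary determinant; your approach makes the centering-at-zero more transparent (it follows from boundedness of the image rather than from an auxiliary determinant calculation), at the mild cost of restating the $SL_2$-weight bookkeeping that underlies Lemma \ref{L:purecompletion}(a). Either route would be acceptable.
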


\begin{proof}  It follows from Proposition \ref{mixedds} that $Ad\circ \CL(\pi)$ is a pure Weil-Deligne representation of some weight $w$.  But since $G$, and therefore $\hat{G}$, is semisimple, the determinant of $Ad\circ \CL(\pi)$ is pure of weight $0$.  It follows that $w$ must equal $0$.
\end{proof}

\begin{remark}\label{ipure2}  The proof actually shows that $\CL(\pi)$ is $\iota$-pure, in the sense of Remark \ref{ipure}, for every $\iota$.
\end{remark}

\vskip 10pt

\section{Local systems on open subsets of $\PP^1$}  \label{S:local}

The rest of the paper is devoted to establishing more refined properties of the semisimple parameter $\CL^{ss}(\pi)$ of a supercuspidal representation $\pi$ and in particular proving Theorem \ref{mainthm}. For this, we shall need to appeal to a global argument which necessitates the construction of appropriate globalizations of supercuspidal representations by Poincar\'e series.
Unlike the previous section, these globalizations will have to be constructed over the function field of $\mathbb{P}^1$, so that we may appeal to special properties of local systems on open subsets of $\mathbb{P}^1$. In this section, we shall discuss the requisite results about local systems on open subsets of $\PP^1$.

\subsection{Purity}\label{purity}

Let $k = \Fq$ be a finite field of characteristic $p$. Let $Z, X \subset \PP^1(k)$ be disjoint finite subsets, $|Z|$ and $|X|$ the corresponding reduced subschemes of $\PP^1(k)$, and let $Y = \PP^1_k \setminus |X| \cup |Z|$, viewed as a scheme over $Spec(k)$. Let $L$ be a finite extension of $\Ql$ and consider a local system $\CV$ over $Y$ with coefficients in $L$ (a lisse $\ell$-adic sheaf). Picking a point $y$ of $Y$ (the generic point, for example),  and letting $V$ denote the finite-dimensional vector space $\CV_y$ over $L$, we can identify $\CV$ with a representation
$$\rho:  \pi_1(Y,y) \ra Aut(V).$$
Let $F_z = k(Y)_z$ be the completion of $k(Y)$ at the point $z \in Z$.
The monodromy of $\CV$ at $z$ is the restriction of $\rho$ (up to conjugation) to a homomorphism 
\begin{equation}\label{localmono} \rho_z:  Gal(F_z^{sep}/F_z) \ra Aut(V).
\end{equation}
Let $I_z \subset Gal(F_z^{sep}/F_z)$ denote the inertia group and let $Frob_z \in Gal(F_z^{sep}/F_z)$ denote any geometric Frobenius element; i.e.,
any lift to an automorphism of $F_z^{sep}$ of the automorphism $x \mapsto x^{\frac{1}{q}}$ of $\bar{k}$.  

If $\Gamma$ is a topological group and $\rho:  \Gamma \ra Aut(V)$ is a continuous
homomorphism, we let $\rho^{ss}$ denote its semisimplification -- the sum of its Jordan-H\"older constituents.

\begin{thm}[\cite{De80}, Corollary 1.7.6]\label{pure}  Suppose that $Z$ is not empty and, for all $z \in Z$, the following conditions are satisfied:
\begin{itemize}
\item $\rho_z^{ss}$ is unramified;
\item the Frobenius eigenvalues $\rho_z(Frob_z)$ are all Weil $q$-numbers of the same weight.
\end{itemize}
Then $\CV$ extends to a local system over $\PP^1_k \setminus |X|$.
\end{thm}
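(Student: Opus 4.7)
The plan is to extend $\CV$ across each $z \in Z$ separately, which suffices to extend it to $\PP^1_k \setminus |X|$. By the standard theory of lisse $\ell$-adic sheaves on regular Noetherian schemes, $\CV$ extends to a local system on $Y \cup \{z\}$ precisely when the local monodromy representation $\rho_z$ is unramified, i.e.~when $\rho_z(I_z) = \{1\}$. Fix $z \in Z$; the task is to upgrade the hypothesis that $\rho_z^{ss}$ is unramified to the same statement for $\rho_z$ itself. By Grothendieck's $\ell$-adic local monodromy theorem (using $\ell \neq p$), an open subgroup $I_z^0 \subseteq I_z$ acts unipotently on $V$, so with $t_\ell : I_z^{\mathrm{tame}} \to \ZZ_\ell(1)$ denoting the $\ell$-adic tame character there is a nilpotent operator $N_z \in \mathrm{End}(V)$ satisfying $\rho_z(\sigma) = \exp(t_\ell(\sigma)\, N_z)$ for all $\sigma \in I_z^0$.

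The crucial step is to prove $N_z = 0$, which I would deduce from a Frobenius-weight shift argument. Geometric Frobenius acts on $\ZZ_\ell(1)$ by multiplication by $q^{-1}$, so $t_\ell(\mathrm{Frob}_z\, \sigma\, \mathrm{Frob}_z^{-1}) = q^{-1} t_\ell(\sigma)$; comparing the two sides of
\[
\rho_z(\mathrm{Frob}_z\, \sigma\, \mathrm{Frob}_z^{-1}) \;=\; \rho_z(\mathrm{Frob}_z)\, \rho_z(\sigma)\, \rho_z(\mathrm{Frob}_z)^{-1}
\]
for $\sigma \in I_z^0$ yields the commutation relation $\rho_z(\mathrm{Frob}_z) \cdot N_z \cdot \rho_z(\mathrm{Frob}_z)^{-1} = q^{-1} N_z$. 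Hence $N_z$ carries the generalized $\alpha$-eigenspace of $\rho_z(\mathrm{Frob}_z)$ into the generalized $q^{-1}\alpha$-eigenspace. By hypothesis every such $\alpha$ is a Weil $q$-number of a fixed weight $w$, so $q^{-1}\alpha$ is a Weil $q$-number of weight $w - 2 \neq w$; the corresponding generalized eigenspace is therefore zero, forcing $N_z = 0$.

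Consequently $\rho_z(I_z^0) = \{1\}$, so $\rho_z|_{I_z}$ factors through the finite quotient $I_z / I_z^0$. A finite-image continuous representation in characteristic zero is semisimple by Maschke's theorem, hence isomorphic to its semisimplification; since $\rho_z^{ss}|_{I_z}$ is trivial by assumption, we conclude $\rho_z(I_z) = \{1\}$, and $\CV$ extends across $z$. Repeating for every $z \in Z$ produces the desired extension to $\PP^1_k \setminus |X|$. The main obstacle is the Frobenius-weight computation in the second paragraph, which depends on the precise normalization of the tame character and of geometric Frobenius; the remaining steps are standard manipulations in the Weil--Deligne formalism.
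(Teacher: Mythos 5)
Your proof is correct, but it follows a more self-contained route than the paper's. The paper invokes Deligne's Corollary 1.7.6 from Weil II as a black box for the statement that, when the Frobenius eigenvalues all share a single weight (even just for one complex embedding), the image $\rho_z(I_z)$ is a \emph{finite} group; it then notes that unramifiedness of $\rho_z^{ss}$ makes every element of $\rho_z(I_z)$ unipotent, and a finite group of unipotent elements in characteristic zero is trivial. You instead unpack the content behind Deligne's finiteness: Grothendieck's $\ell$-adic monodromy theorem produces the nilpotent $N_z$ on an open subgroup $I_z^0$, the conjugation relation $\rho_z(\mathrm{Frob}_z) N_z \rho_z(\mathrm{Frob}_z)^{-1} = q^{-1}N_z$ shows $N_z$ shifts Frobenius-weights by $-2$, and the single-weight hypothesis forces $N_z=0$, giving $\rho_z(I_z^0)=\{1\}$ and hence finiteness of $\rho_z(I_z)$ directly. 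Your concluding step (Maschke plus triviality of $\rho_z^{ss}|_{I_z}$) is equivalent to the paper's unipotence observation: to see that the semisimplification of $\rho_z|_{I_z}$ is trivial, note that a Jordan--H\"older filtration of $\rho_z$ over the full decomposition group restricts to an $I_z$-stable filtration whose graded pieces are trivial $I_z$-modules, so any further refinement to $I_z$-irreducibles remains trivial. The normalization concern you flag at the end is harmless: the argument only uses that Frobenius-conjugation rescales $N_z$ by a \emph{nontrivial} power of $q$, which holds under any standard convention. The trade-off is transparency for length: the paper's citation is shorter, while your version makes the mechanism explicit and requires only Grothendieck's monodromy theorem rather than the full weight-monodromy machinery of Weil~II.
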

\begin{proof}   In fact, Deligne's result only requires the Frobenius eigenvalues to have the same weight for a single complex embedding; under this condition, $\rho_z(I_z)$ is a finite group.  But we have assumed that $\rho_z^{ss}$ is unramified, thus $\rho_z(I_z)$ is unipotent.  It
follows that $\rho_z(I_z)$ must be trivial.
\end{proof}

\begin{corollary}\label{abelian}  Under the hypotheses of Theorem \ref{pure}, suppose $X = \{0,\infty\}$ and $\CV$ is a semisimple local system on $\PP^1$ with associated monodromy representation 
$$\rho:  \pi_1(\PP^1 \setminus |X|,y) \ra Aut(V).$$
 Suppose moreover that
 \begin{itemize}
 \item[(a)] the local monodromy representation of $\CV$ at $0$ and $\infty$ is tame, 
 \item[(b)]  the semisimplification of the local monodromy representation at $0$ {\bf or} $\infty$ is abelian and Frobenius-semisimple. 
 \end{itemize}
  Then, possibly after replacing $L$ by a finite extension $L'$, $\CV$ breaks up as the sum of $1$-dimensional local systems.
\end{corollary}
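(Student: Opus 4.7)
The plan is to exploit the fact that $\Gm = \PP^1 \setminus \{0,\infty\}$ has topologically pro-cyclic tame geometric fundamental group, which forces the geometric monodromy of $\CV$ to be generated by a single element $g = \rho(\gamma)$; the abelian condition in (b) will then make $g$ torsion of order dividing $q-1$ and Frobenius commute with $g$, giving an abelian global image, which combined with semisimplicity of $\rho$ yields the decomposition into $1$-dimensional pieces over a suitable finite extension $L'/L$.

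By hypothesis (a), $\rho$ factors through the tame quotient $\pi_1^{tame}(\Gm, y)$. Since $\pi_1^{tame}(\Gm_{\bar k}, \bar y) \cong \hat{\ZZ}^{(p')}(1)$ is topologically pro-cyclic, generated by a single loop $\gamma$ (so that the tame inertia generators at $0$ and $\infty$ are $\gamma$ and $\gamma^{-1}$), the geometric image $\rho(\pi_1^{geom})$ is topologically cyclic, generated by $g := \rho(\gamma)$. Because $\pi_1^{geom}$ is normal in $\pi_1$ and $\rho$ is semisimple, Clifford's theorem yields that $\rho|_{\pi_1^{geom}}$ is semisimple. The Zariski closure of its image is therefore abelian and reductive -- so diagonalizable -- and in particular $g$ is a semisimple element of $Aut(V)$. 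For any Frobenius lift $F \in \pi_1(\Gm, y)$, the relation $F \gamma F^{-1} = \gamma^q$ gives
\[
\rho(F)\, g\, \rho(F)^{-1} = g^q.
\]

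Next I would apply hypothesis (b) at $\infty$ (the case at $0$ is symmetric). Since $g$ is semisimple, $\rho|_{I_\infty}$ is already semisimple, so the image of $\gamma_\infty = \gamma^{-1}$ under $\rho_\infty^{ss}$ is still $g^{-1}$. The relation $F_\infty \gamma_\infty F_\infty^{-1} = \gamma_\infty^q$ in the tame decomposition group becomes, in any abelian quotient, $\gamma_\infty = \gamma_\infty^q$; hence the abelian image of $\rho_\infty^{ss}$ satisfies $g^{-(q-1)} = 1$, i.e., $g^{q-1} = 1$. Combined with the Frobenius-conjugation identity above, $\rho(F)$ commutes with $g$, so $\rho(\pi_1(\Gm, y))$, being topologically generated by $g$ and $\rho(F)$, is abelian.

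Finally, $\rho$ is semisimple with abelian image, so its Zariski closure is an abelian reductive algebraic group, hence diagonalizable, and every element of the image is semisimple. After passing to a finite extension $L'/L$ containing the eigenvalues of $g$ (which are $(q-1)$-th roots of unity) and of $\rho(F)$ (finitely many algebraic numbers over $L$), the commuting semisimple operators $g$ and $\rho(F)$ can be simultaneously diagonalized, producing a $\pi_1$-stable decomposition of $V \otimes_L L'$ into common eigenlines and hence of $\CV \otimes L'$ into a sum of $1$-dimensional local systems. I expect the most delicate step to be the appeal to Clifford theory to conclude that $g$ itself -- rather than merely its semisimple Jordan part -- is semisimple: it is this upgrade that allows the abelian condition on $\rho_\infty^{ss}$ to translate into the global torsion relation $g^{q-1}=1$ and thereby force Frobenius to commute with $g$.
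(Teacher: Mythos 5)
Your proof is correct, and it takes a genuinely different route from the paper's. The paper first invokes L.~Lafforgue's twist decomposition to reduce $\CV$ to a mixed (then irreducible) local system, deduces from the purity hypothesis via Deligne's Weil~II that the geometric monodromy is semisimple and hence that the image $J$ of $\pi_1^{\mathrm{geom}}$ is finite, and only then uses the abelian condition at $0$ or $\infty$ to make the Frobenius conjugation on $J$ trivial. You instead get semisimplicity of $\rho|_{\pi_1^{\mathrm{geom}}}$ directly from Clifford's theorem (using semisimplicity of $\rho$ and normality of $\pi_1^{\mathrm{geom}}$), which already forces $g = \rho(\gamma)$ to be a semisimple operator without any appeal to purity or to the Lafforgue decomposition; you then extract the torsion relation $g^{q-1}=1$ from the abelian hypothesis at one cusp together with the tame relation $F\gamma F^{-1}=\gamma^q$. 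This buys two things: purity at $Z$ is used only insofar as it feeds into Theorem~\ref{pure} to extend $\CV$ to $\Gm$, and the Frobenius-semisimplicity clause of hypothesis~(b) becomes visibly redundant, since once the global image is abelian, semisimplicity of $\rho$ already forces every element (in particular $\rho(F)$) to be diagonalizable. One small imprecision to note: the image of $\gamma_\infty$ under $\rho_\infty^{ss}$ need not literally \emph{equal} $g^{-1}$ --- semisimplification passes to an associated graded, which for a semisimple operator preserves eigenvalues with multiplicity but may change the matrix --- but $\rho_\infty^{ss}(\gamma_\infty)$ is conjugate to $g^{-1}$, which suffices to conclude $g^{q-1}=1$. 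With that phrasing tightened, the argument is complete.
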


\begin{proof}   Theorem \ref{pure} implies that $\CV$ extends to a tame local system, also called $\CV$, over $\Gm$.  By \cite{Laf02}, Corollary VII.8, $\CV$ can be written as a (finite) direct sum
$$\CV = \bigoplus_{\chi} \CV_\chi \otimes \chi$$
where each $\CV_\chi$ is a mixed local system and $\chi$ is the pullback to $\PP^1\setminus |X|$ of a rank $1$ $\ell$-adic local system over $Spec(k)$, in other words a continuous $\ell$-adic character of $Gal(\bar{k}/k)$.   We thus reduce to the case where $\CV$ is a mixed $\ell$-adic local system, and since $\CV$ is assumed semisimple, we may assume $\CV$ to be irreducible. 
\vskip 5pt

  Then the purity hypothesis at the points $z$ in the non-empty set $Z$ implies that $\CV$ is itself pure.  Since the tame fundamental group of $\GG_{m,\bar{k}}$ is just $\prod_{p' \neq p} \ZZ_{p'}(1)$, it follows from the purity that the image $J$ under $\rho$ of the geometric fundamental group of $\Gm$ is finite and necessarily abelian. 
  \vskip 5pt
  
  Now one knows that the image of the tame inertia group $I_0$ at $0$ in the tame fundamental group of $\GG_{m,k}$ is equal to the
  tame fundamental group of $\GG_{m, \bar{k}}$ (i.e. the tame geometric fundamental group of $\GG_{m,k}$). The same is true for the tame inertia group at $\infty$. 
  Hence, the hypothesis (b) on the local monodromy at $0$ or $\infty$ implies that the conjugation action of $Gal(\bar{k}/k)$ on $J$ is trivial, and thus the image
under $\rho$ of the arithmetic fundamental group $\pi_1(\PP^1 \setminus |X|,y)$ is also abelian; moreover, the image of $Gal(\bar{k}/k)$ is semisimple by hypothesis.  This completes the proof.
\end{proof}

\subsection{Kummer theory}

Let $K = k(\PP^1) = k(T)$ be the global function field of $\PP^1$, and let $K'$ denote the union of all abelian extensions of $K$ of degree dividing $q - 1$, which is the order of the group of roots of unity in $K$.  Homomorphisms 
$$\alpha:  Gal(K'/K) \ra \Qlb^\times$$ 
are classified by Kummer theory:
\begin{equation}\label{kummer}
\psi:  K^\times/(K^\times)^{q-1}  \isoarrow  X(K) := Hom(Gal(K'/K), \Qlb^\times).
\end{equation}
Let $C_T \subset K^\times/(K^\times)^{q-1}$ be the subgroup generated by $k^\times$ and the parameter $T$, and let $X_T = \psi(C_T)$.
We also consider Kummer theory for the constant field $k$:
\begin{equation}\label{kummerk}
\psi_k:  k^\times  \isoarrow  X(k) := Hom(Gal(k'/k), \Qlb^\times),
\end{equation}
where $k'$ is the cyclic extension of $k$ of order $q^{q-1}$. For $b \in k^\times$, $\beta \in Gal(k'/k)$, let us write:
\[   \beta^b := \psi_k(b)(\beta). \]

Any $z \in k^\times$ defines a geometric point $z \in \GG_{m,k}$; let $\Gamma_z \subset Gal(K'/K)$ denote its decomposition group and  $I_z$ its inertia group.   An element $\alpha \in X_T$ is unramified at such a point $z$, and thus defines a homomorphism 
$$\alpha^{(z)}: \Gamma_z/I_z  \isoarrow Gal(k'/k)  \ra \Qlb^\times.$$
For $a = 1, \dots, q-1$ we let $\alpha_a = \psi(T^a)$.   Then we have the following reciprocity law:

\begin{lemma}\label{recip}  For $z \in \GG_{m}(k)$ and $a = 1, \dots, q-1$ we have
$$(\alpha_a)^{(z)} = \beta^{az}.$$
\end{lemma}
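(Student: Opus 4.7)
The plan is to reduce to the case $a=1$ by Kummer multiplicativity, and then verify that case by a direct local computation. Since $\psi$ is a group homomorphism, $\alpha_a = \psi(T^a) = \psi(T)^a = (\alpha_1)^a$ as characters of $Gal(K'/K)$, and restriction to the decomposition group $\Gamma_z$ respects this product, so $(\alpha_a)^{(z)} = ((\alpha_1)^{(z)})^a$. Likewise, because $\psi_k$ is a group homomorphism, $\beta^{az} = (\beta^z)^a$. It therefore suffices to prove the case $a=1$, namely that under the canonical identification $\Gamma_z/I_z \isoarrow Gal(k'/k)$ (sending geometric Frobenius to geometric Frobenius), $(\alpha_1)^{(z)}$ corresponds to $\psi_k(z)$.

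For this, I would fix $\zeta \in K^{sep}$ with $\zeta^{q-1} = T$, so that $\alpha_1(\sigma) = \sigma(\zeta)/\zeta$ on $Gal(K'/K)$. The Kummer extension $K(\zeta)/K$ is unramified at every $z \in \GG_m(k)$, hence $\zeta$ may be specialized at $z$ to an element $\bar\zeta \in \bar k$ with $\bar\zeta^{q-1} = z$. A direct computation yields
$$Frob_z(\bar\zeta) = \bar\zeta^q = \bar\zeta \cdot \bar\zeta^{q-1} = z\bar\zeta,$$
so $(\alpha_1)^{(z)}(Frob_z) = z$. Applying the very same recipe inside $k$ with $b = z$ gives $\psi_k(z)(Frob_k) = z$. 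Since $Gal(k'/k)$ is cyclic, generated by $Frob_k$, and the two characters agree on this generator, they coincide, which establishes the case $a=1$ and hence the lemma.

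I do not expect a serious obstacle; once the identifications are in place, the entire argument collapses to the one-line calculation $\bar\zeta^q = z\bar\zeta$. The only care required is bookkeeping: geometric versus arithmetic Frobenius, the embedding $k(\bar\zeta) \hookrightarrow k'$ coming from the choice of place above $z$, and the compatibility of the two Kummer maps $\psi$ and $\psi_k$ under specialization at $z$.
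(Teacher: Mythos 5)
Your proof is correct and follows essentially the same route as the paper's: both arguments rest on specializing the Kummer generator $T^{1/(q-1)}$ at the point $z$ to get $z^{1/(q-1)}$ and reading off the Frobenius action on the residue field extension. The only blemish is that you announce geometric Frobenius but then compute with arithmetic Frobenius ($\bar\zeta \mapsto \bar\zeta^q$); since you make the same choice on both sides of the comparison this does not affect the conclusion, but the normalization should be made consistent with the paper's convention.
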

\begin{proof}  Let $K_z$ denote the completion of $K$ at $z$, $\CO_z$ its integer ring, $k_z$ its residue field, which the inclusion of $k$ in $K$ canonically identifies with $k$.    The image of the element $T \in \CO_z$ in $k_z$ is identified with the element $z \in k^\times$,  so the residue field of the unramified extension $\CO_z[T^{1/q-1}]$ is just $k(z^{1/q-1})$.   The identity is then obvious.
\end{proof}

\begin{lemma}\label{Frobenius-trace} Let $\mathcal L$ be a rank one local system on $\mathbb G_{m,k}$ and $z \in k^{\times}$. Then
\[ \operatorname{tr}( \operatorname{Frob}_z, \mathcal L) =  \operatorname{tr}(\operatorname{Frob}_1 , \mathcal L)  \operatorname{tr}(m_z ^{-1} , \mathcal L) \]
where $m_z$ is the element of the inertia group $I_0$ which acts by multiplication by $z$ on the $(q-1)$st root of $T$. 
\end{lemma}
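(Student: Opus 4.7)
\medskip

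\noindent\emph{Proof plan.} Both sides of the claimed identity are values of the continuous character $\chi \colon \pi_1^{\mathrm{ab}}(\Gm,\bar 1) \to \Qlb^{\times}$ attached to $\mathcal L$, so the plan is to prove the group-theoretic identity
\[ [\operatorname{Frob}_z] \;=\; [\operatorname{Frob}_1] \cdot m_z^{-1} \]
in $\pi_1^{\mathrm{ab}}(\Gm)$ and then apply $\chi$ to both sides.

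\smallskip

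First I would establish, for all $z,w \in k^{\times}$, the cocycle relation
\[ [\operatorname{Frob}_{zw}] \;=\; [\operatorname{Frob}_z] \cdot [\operatorname{Frob}_w] \cdot [\operatorname{Frob}_1]^{-1} \]
in $\pi_1^{\mathrm{ab}}(\Gm)$. This is a consequence of the group law on $\Gm$: the multiplication map $\mu \colon \Gm \times \Gm \to \Gm$ sends the rational point $(z,w)$ to $zw$ and restricts to the identity on both $\Gm \times \{1\}$ and $\{1\} \times \Gm$; combined with the K\"unneth description of $\pi_1^{\mathrm{ab}}(\Gm \times \Gm)$ as the fibre product of $\pi_1^{\mathrm{ab}}(\Gm)$ with itself over $\operatorname{Gal}(\bar k/k)$ (in which $[\operatorname{Frob}_{(z,w)}]$ is the ``diagonal" Frobenius), the cocycle identity drops out by a short bookkeeping in the abelian group. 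In particular, $z \mapsto [\operatorname{Frob}_z][\operatorname{Frob}_1]^{-1}$ is a group homomorphism $k^{\times} \to \pi_1^{\mathrm{ab}}(\Gm)^{\mathrm{geom}}$.

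\smallskip

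Second, I would identify this homomorphism with $z \mapsto m_z^{-1}$ by testing against the Kummer characters $\alpha_a = \psi(T^a)$ of Lemma \ref{recip}. A direct Galois computation in the unramified extension $K_z^{\mathrm{ur}} = \bar k((T-z))$ shows that geometric Frobenius at $z$ acts on $T^{1/(q-1)}$ by multiplication by $z^{-1}$, so $\alpha_a([\operatorname{Frob}_z]) \alpha_a([\operatorname{Frob}_1])^{-1} = z^{-a}$; on the other hand, by definition $m_z$ acts on $T^{a/(q-1)}$ as multiplication by $z^a$, so $\alpha_a(m_z^{-1}) = z^{-a}$ as well. The two homomorphisms thus agree on every Kummer character, and an arbitrary tame rank one local system on $\Gm$ is, up to the pullback of a rank one local system from $\operatorname{Spec} k$ (which contributes equally to both sides of the lemma and has trivial local monodromy at $0$), of this form; the identity for tame $\mathcal L$ follows by multiplicativity.

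\smallskip

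The point requiring the most care is the wild case: as stated, $m_z \in I_0$ is only well-defined modulo the kernel of the tame projection $I_0 \to \mu_{q-1}$, so for wildly ramified $\mathcal L$ one must pin $m_z$ down canonically, say as the image under the local reciprocity map $K_0^{\times} \to I_0^{\mathrm{ab}}$ of the element $z \in k^{\times} \subset O_0^{\times}$. One then verifies the full identity by applying global reciprocity on $\mathbb{P}^1$ to the rational function $(T-z)/(T-1)$, whose zero and pole at $z$ and $1$ produce the Frobenius terms and whose principal-unit contributions at $0$ and $\infty$ account exactly for the wild part of the monodromy. In the intended application, however, $\mathcal L$ is tame, so the tame argument is sufficient.
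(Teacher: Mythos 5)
Your proof is correct, and it takes a genuinely different route from the paper's. The paper offers two arguments: one reduces the identity to $\operatorname{Frob}_z = \operatorname{Frob}_1 \cdot m_z^{-1}$ in $\pi_1^{\mathrm{ab,tame}}(\Gm)$ and verifies it by writing that group via class field theory as an id\`ele class group and exhibiting the principal id\`ele $(T-z)/(T-1)$; the other twists $\mathcal L$ to make $\operatorname{Frob}_1$ act trivially, observes that the geometric monodromy of a tame rank one local system must factor through $\ZZ/(q-1)$ (Frobenius coinvariants), and then quotes Lemma \ref{recip}. Your argument replaces the explicit id\`ele with a structural observation: the map $z \mapsto [\operatorname{Frob}_z][\operatorname{Frob}_1]^{-1}$ from $k^\times$ into the geometric part of $\pi_1^{\mathrm{ab}}(\Gm)$ is a homomorphism, because the multiplication map $\mu\colon\Gm\times\Gm\to\Gm$ intertwines Frobenius conjugacy classes and restricts to the identity on both axes. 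This is a nice conceptual explanation of \emph{why} the formula is bilinear in the way it is, and it makes the Kummer-pairing computation feel less like a coincidence. What each approach buys: the paper's class-field-theory proof is the shortest and works verbatim in $\pi_1^{\mathrm{ab,tame}}$; yours exposes the group-scheme functoriality at the cost of invoking a K\"unneth formula for $\pi_1^{\mathrm{ab}}(\Gm\times\Gm)$.

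Two cautionary remarks. First, the K\"unneth description of $\pi_1$ of a product fails for non-proper schemes in positive characteristic once wild covers enter (Artin--Schreier covers of $\Gm\times\Gm$ do not all come from the factors), so your cocycle step needs to be carried out in the tame (or prime-to-$p$) abelianized fundamental group; this is implicit in your remarks but should be said explicitly. Relatedly, once you have reduced to the tame case, the cocycle relation is not logically necessary: the fact you use at the end --- that a tame rank one local system on $\Gm$ over $k$ becomes a Kummer sheaf $\alpha_a$ after twisting by a character of $\operatorname{Gal}(\bar k/k)$ --- already follows from the observation that geometric monodromy factors through $\mu_{q-1}$, and at that point a direct check on each $\alpha_a$ (your ``second'' step) finishes the argument without any mention of $\Gm\times\Gm$. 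Second, your observation about the wild case is a genuine improvement in precision over the statement in the paper: as written, $m_z$ is only well-defined modulo wild inertia, so $\operatorname{tr}(m_z^{-1},\mathcal L)$ is ambiguous unless $\mathcal L$ is tame at $0$, and the paper's proof (via the tame quotient in (i), or explicitly in (ii)) does implicitly assume tameness. Your proposed fix --- define $m_z$ as the image of $z\in k^\times\subset\CO_0^\times$ under local reciprocity and run global reciprocity with $(T-z)/(T-1)$ --- is exactly the paper's first proof; since every application in the paper has $\mathcal L$ tame at $0$ and $\infty$, this extra care is not needed there, but the point is well taken.
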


\begin{proof} 
We give two proofs of the lemma.

\vskip 5pt

\noindent (i) (Class field theory) It suffices to show that  $\operatorname{Frob}_z = \operatorname{Frob}_1 \cdot m_z^{-1} $ in the abelianization of the tame fundamental group of $\mathbb G_m$. Class field theory gives an isomorphism between this abelianization and the profinite completion of \[ K^\times \backslash \mathbb A_K^\times /  \left( \prod_{\substack{ y \in |\mathbb P^1_k|\\ y \notin \{0,\infty|}} \mathcal O_{K_y}^\times \times \prod_{y \in \{0,\infty\}} U_{K_y, 1} \right). \] 
Moerover, 
\begin{itemize}
\item $\operatorname{Frob}_z$ corresponds to the id\`ele which is a uniformizer at $z$ and the identity at all other places, 
\item $\operatorname{Frob}_1$ corresponds to the id\`ele which is a uniformizer at $1$ and the identity at all other places, and
\item  $m_z$ corresponds to the idele which is $z$ at $0$ and the identity at all other places. 
\end{itemize}
So it suffices to check that the first id\`ele class is the product of the other two id\`ele classes. This can be checked by multiplying by $\frac{T- z}{T-1} \in K^\times$, which is a uniformizer at $z$, the inverse of a uniformizer at $1$, and $z$ at $0$.
\vskip 10pt

\noindent (ii) (Kummer theory) We can construct a one-dimensional representation of $Gal(\overline{k}/k)$ on which $\operatorname{Frob}_q$ acts by multiplication by $ \operatorname{tr}(\operatorname{Frob}_1 , \mathcal L)$. On tensoring $\mathcal L$ with the inverse representation, we reduce to the case where $\operatorname{Frob}_1$ acts trivially on $\mathcal L$. Since the tame fundamental group of $\mathbb G_m$ is generated by the geometric tame fundamental group and Frobenius, the image of the Galois group acting on $\mathcal L$ must equal the image of the geometric tame fundamental group. The action of the geometric tame fundamental group on $\mathcal L$ must be by a $\operatorname{Frob}_q$-invariant character, which means it factors through the $\operatorname{Frob}_q$-coinvariant of the abelianization of the tame geometric fundamental group, which is $\mathbb Z/(q-1)$.  So the action of the geometric fundamental group is by a character of order $q-1$.  The result now follows from Lemma \ref{recip}.

\end{proof}

 \vskip 10pt

 \section{Unramified Reductive Groups}\label{S:unramred}
 We continue to assume that $K = k(\PP^1)$, with $k = \mathbb{F}_q$. In this section, we establish some structural results for an unramified connected reductive group $G$ over $K$.
 In particular, we establish a useful result (Lemma \ref{parahoric-coordinates-lemma}) which gives congruence conditions on the matrix entries of a parahoric subgroup  of $G$ (at a place of $K$) under a faithful algebraic representation.  This parahoric entry inequality will be crucially used in the next few sections.
 \vskip 5pt
 
\subsection{Unramified groups}\label{SS:unramified}
By definition, a connected reductive group over $K$ is unramified if it is quasi-split and split by an unramified extension of $K$. Such a $G$ can in fact be defined over the finite field $k$. Hence,  we assume henceforth that $G$ is defined over $k$, with $Z$ the identity component of its center and    $B = T \cdot N$ a fixed  Borel $k$-subgroup. 
Let $S \subset T$ be the maximal $k$-split subtorus in $T$, so that $S$ is a maximal split torus of $G$ over $k$. 
\vskip 5pt

Let 
\[  \rho\colon G \to \operatorname{Aut}(V) \]
be a faithful algebraic representation of $G$ defined over $k$, and let $e_1,\dots, e_n$ by a basis of $V$  on which $S$ acts diagonally via  the eigencharacter $\lambda_i \in X^* (S)$ on $e_i$.

\vskip 5pt

By base change, we obtain from the above the analogous objects over any $k$-algebra. In particular, we have:
\begin{itemize}
\item  $S_K \subset B_K = T_K \cdot N_K \subset G_K$ over $K = k(\PP^1) = k(t)$; to simplify notation, we shall suppress the subscript $K$.  Note that $S$ is still a maximal $K$-split 
torus in $G$.
\vskip 5pt

 \item At each place $y$ of $K$, with associated local completion $K_y$ and ring of integers $\mathcal{O}_y \subset K_y$, we have the $\mathcal{O}_y$-group schemes
\[ S_y \subset B_y  = T_y\cdot N_y \subset G_y. \]
 Note however that $S_y$ need no longer be a maximal $K_y$-spit torus in $G_y$; we let $A_y \supset S_y$ be a maximal split torus over $\mathcal{O}_y$.  For $y \in \PP^1(k)$, we have $A_y = S_y$. 
\vskip 5pt

\item At each place $y$ of $K$, $G(\mathcal{O}_y)$ is a hyperspecial maximal compact subgroup of $G(K_y)$. One has the reduction-mod-p  map
\[  G(\mathcal{O}_y) \longrightarrow G(k_y)  \]
where $k_y$ is the residue field of $K_y$. The preimage of $B(k_y) \subset G(k_y)$ in $G(\mathcal{O}_y)$ is an Iwahori subgroup $I_y$ of $G(K_y)$. Its pro-p radical $I(p)$ is the preimage of the unipotent radical $N(k_y)$ of $B(k_y)$.

\vskip 5pt
\item For each place $y$ of $K$, we have a faithful algebraic representation $\rho_y \colon G_y \to \operatorname{Aut}(V_y)$. If $y \in \PP^1(k)$, then the basis $\{e_i \}$ is a basis of eigenvectors for the maximal $K_y$-split torus $S_y = A_y$.

\end{itemize}

\vskip 5pt

\subsection{Buildings}  \label{SS:buildings}
For each place $y$ of $K$, consider the (extended) Bruhat-Tits building $\mathcal{B}(G_y)$  which contains the apartment $\mathcal{A}(A_y)$ associated to the maximal $K_y$-split torus $A_y$. The hyperspecial maximal compact subgroup $G(\mathcal{O}_y)$ determines a basepoint in $\mathcal{A}(A_y)$, giving an identification
\[  \mathcal{A}(A_y)  \isoarrow X_*(A_y) \otimes \RR, \]
with $G(\mathcal{O}_y)$ the stabilizer of the origin $0 \in X_*(A_y)$. 
For places $y \notin \PP^1(k)$, we shall only need to consider this basepoint  $0$ and its stabilizer $G(\mathcal{O}_y)$. 
\vskip 5pt

Now consider only those $y \in \PP^1(k)$, so that $A_y  = S_y$. In this case, one has a natural identification of the apartment $\mathcal{A}(A_y)$ with $X_*(S) \otimes \RR$. In particular, the apartments $\mathcal{A}(A_y)$ are naturally identified with each other as $y$ varies in $\PP^1(k)$. 
 
\vskip 5pt

\subsection{ Parahoric entry inequality}  
In this section, we  prove a technical lemma on the matrix entries of elements of parahoric subgroups of $G$ over the local field $K_y= k((t))$ (for $y \in \PP^1(k)$)  that will be useful for us later on.
 
\vskip 5pt

\begin{lemma}\label{parahoric-coordinates-lemma} 
Fix a local field $F= k((t))$ (which we may think of as $K_y$ for $y \in \PP^1(k)$) and let
 
 \begin{itemize}
 \item $G$ be an unramified {\it semisimple} group (as introduced above), with maximal $F$-split torus $S$ contained a Borel subgroup $N = TN$, all defined over $k$;
\item  $c$ be a point in the apartment of $S$ in the Bruhat-Tits building $\mathcal{B}(G)$  with associated parahoric subgroup $G_c$;
\item  $\rho\colon G \to \operatorname{Aut}(V) $ be a faithful algebraic representation of $G$, with $S$-eigenbasis $e_1,\dots, e_n$  with associated eigencharacters $\lambda_i \in X^* (S)$ on $e_i$.
\end{itemize}
Then for any $g \in G_c \subset G(K)$, we have 

\begin{equation}\label{parahoric-entry-inequality} 
v ( \rho_{ij} (  g)) \geq c \cdot (\lambda_i - \lambda_j ),\end{equation} 
where $v: F \to \ZZ$ is the normalized valuation on $F$ and $\rho_{ij}(g)$ denotes the $(i,j)$-th entry of the matrix of $\rho(g)$ relative to the basis $\{e_i\}$.
\vskip 5pt

More generally,  if $g$ lies in the Moy-Prasad subgroup $G_{c,r+}$ for some $r \geq 0$, then we have
 \begin{equation}\label{parahoric-entry-inequality-2} 
 v ( \rho_{ij} (  g) - \delta_{ij} ) > c \cdot (\lambda_i - \lambda_j ) +r.\end{equation} 
\end{lemma}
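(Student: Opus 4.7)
The plan is to reduce both inequalities to a check on generators of $G_c$ (respectively $G_{c,r+}$) by exploiting that the conditions defining the inequalities are closed under matrix multiplication.

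For the first inequality, I will introduce the set
\[ H = \{A \in \operatorname{Aut}(V)(F) : v(A_{ij}) \geq c \cdot (\lambda_i - \lambda_j) \text{ for all } i,j\}, \]
and observe that it is closed under multiplication by the estimate
\[ v\bigl((AB)_{ij}\bigr) \geq \min_k \bigl[c(\lambda_i - \lambda_k) + c(\lambda_k - \lambda_j)\bigr] = c(\lambda_i - \lambda_j). \]
By Bruhat-Tits theory, $G_c$ is topologically generated by $T(\mathcal{O})$ together with the root subgroup pieces $U_\alpha(F) \cap G_c = \{u_\alpha(y) : v(y) \geq \alpha(c)\}$. For $t \in T(\mathcal{O})$, since $T \supset S$ preserves each $S$-weight space, $\rho(t)_{ij} = 0$ unless $\lambda_i = \lambda_j$, and the nonvanishing block entries are integral, so $\rho(t) \in H$. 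For $u = u_\alpha(y)$, the $S$-equivariance $\rho(s)\,\rho(u_\alpha(y))\,\rho(s)^{-1} = \rho(u_\alpha(\alpha(s) y))$ combined with the diagonality of $\rho(s)$ forces $\rho(u)_{ij}$ to be a scalar multiple of $y^k$ whenever $\lambda_i - \lambda_j = k\alpha$ for some integer $k \geq 0$, and to vanish otherwise; hence $v(\rho(u)_{ij}) \geq k\, v(y) \geq k\,\alpha(c) = c \cdot (\lambda_i - \lambda_j)$, so $\rho(u) \in H$.

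For the strengthened inequality I will apply the same strategy to
\[ H_{r+} = \{A : v(A_{ij} - \delta_{ij}) > r + c \cdot (\lambda_i - \lambda_j) \text{ for all } i,j\}. \]
Writing $A = I + M$ and $B = I + M'$ with both factors in $H_{r+}$, the cross term $MM'$ has $(i,j)$ entries with valuation strictly exceeding $2r + c(\lambda_i - \lambda_j) \geq r + c(\lambda_i - \lambda_j)$, using $r \geq 0$; hence $H_{r+}$ is closed under multiplication. The Moy-Prasad subgroup $G_{c,r+}$ is generated by $T_{r+}$ (for which $\rho(t) - I$ vanishes off the $S$-weight blocks and has entries with $v > r$ on them) together with $u_\alpha(y)$ with $v(y) > r + \alpha(c)$ (for which the expansion above gives $v(\rho(u)_{ij}) \geq k\, v(y) > k(r + \alpha(c)) \geq r + (\lambda_i - \lambda_j)(c)$ when $k \geq 1$, and vanishing entries when $k = 0$ and $i \neq j$).

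The main obstacle will be the appeal to Bruhat-Tits theory for the generation of $G_c$ and $G_{c,r+}$ by tori and root subgroup pieces, which is standard but needs care at non-special points where several affine root filtrations interact. A cleaner alternative that sidesteps this is a conjugation argument: since $G$ is semisimple, $c \in X_*(S)\otimes \mathbb{Q}$, so one may clear denominators and write $c = \mu/N$ with $\mu \in X_*(S)$ and $N \geq 1$; then in the totally ramified extension $F' = k(\!(t^{1/N})\!)$, the element $s = \mu(t^{1/N})^{-1} \in S(F')$ conjugates $G_c(F) \subset G_{Nc}(F')$ into the hyperspecial $G(\mathcal{O}_{F'})$ (resp.\ conjugates $G_{c,r+}(F)$ into the depth $Nr$ congruence subgroup). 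Since $\rho(s)$ is diagonal with entries $t^{-\lambda_i(c)}$, the integrality of $\rho(sgs^{-1})$ over $\mathcal{O}_{F'}$ translates, after rescaling the valuation by $N$, directly into both inequalities.
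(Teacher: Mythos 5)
Your first argument follows the same overall strategy as the paper --- reduce to generators of $G_c$ (resp.\ $G_{c,r+}$) by noting the target inequality is stable under matrix multiplication --- but you justify the crucial facts about root-group matrix entries differently. The paper shows that $\rho_{ij}$ restricted to $U_b$ is a homogeneous polynomial of degree $n$ (where $\lambda_i - \lambda_j = nb$, $n \geq 0$) by lifting $(G,\rho)$ to the Witt vectors $W(k)$ and invoking $SL_2$ representation theory in characteristic zero; you instead extract the same homogeneity and degree constraint directly from $S$-equivariance, $\rho(s)\rho(u_\alpha(y))\rho(s)^{-1} = \rho(u_\alpha(\alpha(s)y))$, which is cleaner and avoids the lifting step. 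This is a genuine simplification of the heart of the paper's argument, though one should be a little careful about the non-split unramified case: there $U_\alpha \cong \mathrm{Res}_{k_\alpha/k}\mathbf{G}_a$ can be several-dimensional over $k$, so ``homogeneous in $y$'' means homogeneous in the full set of $k$-coordinates of $y \in k_\alpha$; and in the $BC$-type case (e.g.\ unramified $SU(2n+1)$) the restricted root groups for multipliable roots are non-abelian, a point the paper also leaves implicit. Your alternative conjugation argument is a genuinely different route not taken in the paper: clear denominators $c = \mu/N$, pass to the totally ramified extension $F' = k(\!(t^{1/N})\!)$, and conjugate $G_c$ (resp.\ $G_{c,r+}$) into $G(\mathcal O_{F'})$ (resp.\ its depth-$Nr$ congruence subgroup) by $s = \mu(t^{1/N})^{-1}$; since $\rho(s)$ is diagonal with entries $t^{-\lambda_i(c)}$, unwinding $\rho(sgs^{-1}) \in GL_n(\mathcal O_{F'})$ recovers both inequalities. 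This is elegant and avoids examining generators entirely, at the cost of invoking the compatibility of parahoric and Moy--Prasad filtrations with ramified base change and the translation action of $X_*(S)$ on the building --- all standard, but a nontrivial amount of Bruhat--Tits machinery. One small gap there: $c$ need not a priori lie in $X_*(S)\otimes\mathbb{Q}$; you should say that $G_c$ and $G_{c,r+}$ are unchanged upon replacing $c$ by a rational point in the same face, and that the inequality for such a replacement implies it for $c$ (e.g.\ take the infimum over a dense set of rational perturbations), before clearing denominators.
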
 

\begin{proof} We give the proof in the $G_c$ case, and then describe the modifications necessary for $G_{c,r+}$.
\vskip 5pt

If \eqref{parahoric-entry-inequality} holds for $g_1$ and $g_2$, then it holds for $g_1g_2$, so that it suffices to verify \eqref{parahoric-entry-inequality} for a set of generators of the parahoric $G_c$. The parahoric $G_c$ is generated by the subgroups $U_\psi$ for those affine functions $\psi$ on the apartment of $S$ such that 
\[  \psi ( c)  \geq 0. \]
  Here $\psi$ is an affine function of the form $\psi(w) = b \cdot w -s$ for a root $b \in X^*(S)$ and a real number $s$. If $U_b \subset G$ is the (restricted) root group associated to $b$, then one has  an isomorphism over $k$:
  \[ i_b:  U_b \isoarrow \mathbb {\rm Res}_{k_b/k}G_a, \] 
  and the subgroup $U_\psi$ can be described as:
  \[  U_{\psi} = \{  u \in U_b(F):  v(i_b(u)) \geq s \}, \]
  where $v$ is the unique extension of $v$ to $k_b$. 
\vskip 5pt

To obtain \eqref{parahoric-entry-inequality}, it suffices to check that the following two statements  for $g\in U_b$: 
\vskip 5pt

\begin{itemize}
\item[(a)]  If $\rho_{ij} (g) \neq 0$, then $\lambda_i -\lambda_j = n b$ for some $n\geq 0$;
\vskip 5pt

\item[(b)]  $\rho_{ij}(g)$ is a homogeneous polynomial function on $U_b$ defined over $k$ of degree $n$.
\end{itemize}
Indeed, if (a) and (b) hold, then we have:
 \[v (\rho_{ij}(g)) \geq n s   \geq ns + n \psi ( c) = n b \cdot c= (\lambda_i -\lambda_j) \cdot c,\]
 as desired.
\vskip 5pt

To prove (a) and (b), we use lifting to characteristic $0$.  
The group $G$ (which is defined over $k$) has a lift from the field $k$ to the Witt vectors $W(k)$, unique up to not-necessarily-unique isomorphism. Since irreducible algebraic representations of $G$  are classified by Galois orbits of highest weights, and the Galois action for $k$ and $W(k)$ are the same, we may lift the representation $V$ from the field $k$ to its ring  of Witt vectors $W(k)$.  Both properties in (a) and (b)  are preserved by reduction mod $p$, and thus may be checked over the fraction field of $W(\kappa)$, where $U_b$ is obtained by exponentiating the root  subspace of $b$. In characteristic $0$, both (a) and (b) follow readily from the representation theory of $SL_2$. 
\vskip 5pt

We now indicate the modifications needed for the statement for $G_{c, r^+}$.
The inequality  \eqref{parahoric-entry-inequality-2} is also stable under matrix multiplication since $r \geq 0$. Because the subgroups $G_{c,r+}$ are generated by $U_\psi$ for those $\psi$ with $\psi(c) > r$ and the subgroups $T^a_{n \delta}$ for the maximal torus $T$ containing $S$, for $n > r$.  Here, $a$ is an (absolute) simple root of $T$ (over $\overline{k}$), with associated root subgroup $SL_2$ (over $\overline{k}$),  $T^a$ is the 1-dimensional torus (over $\overline{k}$) which is the image of the coroot   
\[  a^{\vee}: \mathbb{G}_m \to T^a \subset T \]
and 
\[  T^a_{n \delta} = \{ a^{\vee}(x)  \in T^a(F^{ur}) \cap G(F):  v(x-1) \geq n \}. \]
\vskip 5pt

We begin with the elements of $U_{\psi}$ as above. 
That the elements of  such $U_\psi$ satisfy \eqref{parahoric-entry-inequality-2} follow from (a) and (b) above  and the additional fact that $\rho_{ij}$ is the constant function $1$ on $U_b$ if $n=0$.  Indeed, these imply that:
\[v (\rho_{ij}(g)) \geq n s   \geq ns + n \psi ( c)  +nr = n b \cdot c + nr> (\lambda_i -\lambda_j) \cdot c+r \]
if $n>0$ and $v (\rho_{ij}(g)-1) = \infty$ if $n=0$.
\vskip 5pt

Hence, it remains to show that elements  $g\in T^a_{n \delta}$ satisfy \eqref{parahoric-entry-inequality-2}. Because $T$ is the centralizer of  $S$ in $G$, we see that 
\[  \text{$ g \in T^a_{n \delta}$ and $\lambda_i \ne \lambda_j$} \Longrightarrow    \rho_{ij}(g)=0. \]  
Thus, it suffices to show 
\[  v(\rho_{ij}(g)-\delta_{ij}) \geq n > r \quad \text{ if $\lambda_i = \lambda_j$.} \]
Now  the action of the elements $g \in T^a$ and therefore the matrix entries $\rho_{ij}$ (restricted to $T^a$)  are given by polynomial functions on $\mathbb G_m$ over $\overline{k}$, which are equal to $\delta_{ij}$ at the identity element $1$. Thus the difference between $\rho_{ij}(g)$ and $1$ has valuation at least $n$ for $g \in T^a_{n \delta}$, as desired. 

 \end{proof}

\vskip 5pt

\section{ Open Compact Subgroups of $G(\mathbb{A}_K)$}  \label{S:open}
After the preparatory local lemma above, we return to the global setting of \S \ref{SS:unramified} where  $G$ is an unramified semisimple group over $K = k(t)$.
We will thus use the notation of \S \ref{SS:unramified} and \S \ref{SS:buildings}.
Our goal is to construct some open compact subgroups of $G(\mathbb{A}_K)$ with desirable properties.
\vskip 5pt

With $F = k((t))$,  let $U^* \subset G(F)$ be a fixed maximal compact subgroup.
We choose an open compact subgroup of 
$G(\ad_{k(t)})$ of the form $U = \prod_x U_x$, where $x$ runs over the places of $K$, with the following properties.
\begin{itemize}
\item[(a)]  At every $z \in \Gm(k) \subset \PP^1(k)$, $U_z$ equals the fixed $U^*$, with respect to a chosen isomorphism 
$$G(K_z) \isoarrow G(F);$$
\item[(b)]  For $x \notin |\PP^1(k)|$, $U_x$ is the hyperspecial maximal compact subgroup $G(\CO_x)$;
\item[(c)]  The subgroups $U_0$ and $U_\infty$ are parahoric subgroups that remain to be chosen.
\end{itemize}

\begin{prop}\label{parah}  
Let $G$ be unramified and simply-connected. The subgroups $U_0$ and $U_\infty$ can be chosen in such a way that
$$G(K)\cap U = T(k).$$
\end{prop}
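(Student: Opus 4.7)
The plan is to choose $U_0$ and $U_\infty$ to be Iwahori subgroups corresponding to \emph{opposite} Borel subgroups $B = TN$ and $B^- = TN^-$, respectively: $U_0$ will be the Iwahori at $0$ whose reduction modulo $t$ is $B(k)$, and $U_\infty$ the Iwahori at $\infty$ whose reduction modulo $1/t$ is $B^-(k)$.  The inclusion $T(k) \subseteq G(K) \cap U$ is immediate: $T$ is defined over $k$, $T(k) \subseteq T(\mathcal{O}_x) \subseteq G(\mathcal{O}_x) = U_x$ for $x \notin \PP^1(k)$, $T(k) \subseteq U^*$ at every $z \in \Gm(k)$ via the chosen identification, and $T(\mathcal{O}_0), T(\mathcal{O}_\infty)$ are contained in any Iwahori subgroup at those two points.

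For the reverse inclusion I analyze, for $g \in G(K) \cap U$, the matrix entries $\rho_{ij}(g) \in k(t)$ of the faithful representation $\rho \colon G \to \operatorname{Aut}(V)$ fixed in \S\ref{SS:unramified}, where $\{e_i\}$ is the $S$-eigenbasis with weights $\lambda_i \in X^*(S)$.  After replacing the basis if necessary so that it is simultaneously adapted to the $S$-weight decomposition and to a $U^*$-stable lattice in $V_F$ (possible because any $U^*$-stable lattice, being preserved by $S(\mathcal{O}_F)$, decomposes along the $S$-weight spaces), one obtains $v_x(\rho_{ij}(g)) \geq 0$ at every $x \notin \{0,\infty\}$.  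At $0$, the Iwahori condition forces $g \bmod t \in B(k)$; since $B = TN$ sends $V_{\lambda_j}$ into $\bigoplus_{\mu \succeq \lambda_j} V_\mu$ and acts by a scalar inside $V_{\lambda_j}$ itself, this yields $v_0(\rho_{ij}(g)) \geq 1$ whenever $i \neq j$ and $\lambda_i \not\succ \lambda_j$.  Symmetrically, the opposite Iwahori at $\infty$ gives $v_\infty(\rho_{ij}(g)) \geq 1$ whenever $i \neq j$ and $\lambda_i \not\prec \lambda_j$.

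The conclusion follows from the product formula $\sum_x [k(x):k]\, v_x(\rho_{ij}(g)) = 0$ for any nonzero $\rho_{ij}(g) \in k(t)^\times$.  For $i \neq j$, whether $\lambda_i \succ \lambda_j$, $\lambda_i \prec \lambda_j$, $\lambda_i = \lambda_j$, or $\lambda_i$ and $\lambda_j$ are incomparable, at least one of the two Iwahori conditions supplies a valuation $\geq 1$; together with the non-negative contributions at every other place this makes the total strictly positive, forcing $\rho_{ij}(g) = 0$.  For $i = j$, every valuation must vanish, so each $\rho_{ii}(g)$ is a rational function on $\PP^1_k$ with neither zeros nor poles, hence a constant in $k^\times$.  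Therefore $\rho(g)$ is a constant diagonal matrix in the basis $\{e_i\}$, so it commutes with $S$; by faithfulness of $\rho$ and the equality $Z_G(S) = T$ (for $G$ quasi-split with $S$ a maximal $K$-split torus), we conclude $g \in T(k)$.

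The main obstacle I anticipate is justifying the precise mod-uniformizer vanishing statements above.  The bound $v(\rho_{ij}(g)) \geq c \cdot (\lambda_i - \lambda_j)$ of Lemma~\ref{parahoric-coordinates-lemma} only captures the contribution of a single root group and is by itself too weak; what is really used is the stronger statement that the full Iwahori reduces to the Borel (and hence acts as a scalar on each $S$-weight space), which is what forces the vanishing of off-diagonal entries within a single weight space.  A secondary subtlety is that the natural identifications of the apartments at $0$ and $\infty$ with $X_*(S) \otimes \RR$ differ in orientation (since $v_\infty(t) = -v_0(t)$), and one must verify that with this taken into account, the Iwahori at $0$ for $B$ and the Iwahori at $\infty$ for $B^-$ do act as genuinely opposite parahoric subgroups in the sense needed by the argument above.
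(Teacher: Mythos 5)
Your proof is, in essence, the argument the paper relegates to Remark~\ref{hyper}: when $U^*$ is hyperspecial, taking $U_0 = I^+$ and $U_\infty = I^-$ to be opposite Iwahoris does the job. The genuine gap is that the proposition allows $U^*$ to be an arbitrary maximal compact subgroup, i.e.\ an arbitrary maximal parahoric $G_c$ for some point $c$ in the apartment, and your approach does not extend beyond $c = 0$. If $c \neq 0$, the valuation $v_z(\rho_{ij}(g))$ at each of the $q-1$ points $z \in \Gm(k)$ is bounded below only by $c\cdot(\lambda_i-\lambda_j)$, which can be negative, and the bounded amount of positivity you can extract from two Iwahori conditions at $0$ and $\infty$ cannot compensate for $(q-1)$ copies of a negative contribution. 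Your attempt to repair this by replacing $\{e_i\}$ with a basis adapted to a $U^*$-stable lattice does not succeed: the only basis changes compatible with keeping $v_x(\rho_{ij}(g)) \geq 0$ at all $x \notin \PP^1(k)$ (where $U_x = G(\CO_x)$) are $k$-rational ones, which leave the standard lattice invariant; and no single $K$-basis of $V$ can realize a genuinely non-standard lattice simultaneously at all $q-1$ points of $\Gm(k)$ while remaining integral elsewhere, for the usual degree-counting reason on $\PP^1$.

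The paper's proof handles general $c$ precisely by exploiting Lemma~\ref{parahoric-coordinates-lemma}, which you dismiss as too weak. That lemma applies to an \emph{arbitrary} element of the parahoric $G_c$, not just to a single root group, and gives $v(\rho_{ij}(g)) \geq f(v)\cdot(\lambda_i - \lambda_j)$ at every $v$. The trick is to choose $U_0 = G_a$ and $U_\infty = G_b$ with $a + b + (q-1)c = 0$ and $a$ \emph{generic}; then $\sum_v f(v)\cdot(\lambda_i-\lambda_j) = 0$, so the product formula forces $a\cdot(\lambda_i-\lambda_j) \in \ZZ$, which by genericity forces $\lambda_i = \lambda_j$, i.e.\ $g$ commutes with $S$ and hence lies in $T(K)$; constancy of the entries then gives $g \in T(k)$. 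This argument makes no structural assumption on $c$ at all. One smaller issue in your write-up: the claim that $B = TN$ ``acts by a scalar inside $V_{\lambda_j}$ itself'' is false when $T$ is not split, since $V_{\lambda_j}$ is an $S$-weight space (not a $T$-weight space) and $T = Z_G(S)$ need not act diagonally on it. Consequently you cannot conclude that off-diagonal entries within a single $S$-weight block vanish; the correct conclusion at that step is only that $g$ preserves each $S$-weight space, so $g \in Z_G(S)(K) = T(K)$, after which constancy of the remaining (not necessarily diagonal) entries gives $g \in T(k)$.
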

\vskip 5pt

\begin{proof}
Let $\Lambda = X_*(S)$ be the cocharacter lattice of $S$. We have remarked that for $z \in \PP^1(k)$,  we have a natural identification of 
$\Lambda\otimes\RR$ as an apartment in the Bruhat-Tits building of $G_z$. Let $c \in \Lambda\otimes\RR$ be the fixed point of $U^* \subset G(F) \cong G(K_z)$; because $G$ is simply-connected, $U^*$ is the parahoric subgroup $G_c$. 
For $y \notin \PP^1(k)$, we also have a natural inclusion $\Lambda \otimes \RR \hookrightarrow \mathcal{A}(A_y)$.
Choose $a, b \in \Lambda\otimes\RR$ such that 
\[ a+b+(q-1)y =0 \] 
and such that $a$ is sufficiently generic.
\vskip 5pt

For a place $v$ of $K$, define $f(v)$   by:
\[    \begin{cases}
 f(0) = a,  \\
 f(\infty) = b \\
 f(z) = c \text{   for $z \in \Gm(k)$,} \\ 
 f(x) = 0,  \text{  for $x \notin |\PP^1(k)|$.} \end{cases} \]
 Under this definition, we take $U_v $ to be the parahoric subgroup $G_{f(v)}$ associated to $f(v)$.
\vskip 5pt

We are now ready to prove that $G(K) \cap U = T(k)$. To do this, we use the faithful algebraic representation $(\rho, V)$ fixed in \S \ref{SS:unramified}, equipped with a basis $e_1,\dots, e_n$, where  $e_i$ is an eigenvector of $S$ with eigencharacter $\lambda_i \in X^*(S)$. 
Applying Lemma \ref{parahoric-coordinates-lemma}, we see that for $g \in U \cap G( K)$ with $\rho_{ij}(g) \neq 0$ (for a fixed pair $(i,j)$),  we have  
\[ 0 = \sum_v v (\rho_{ij}(g)) \geq \sum_v f(v) \cdot (\lambda_i - \lambda_j) = 0 \cdot (\lambda_i - \lambda_j) =0 .\] 
Because both sides are $0$, the inequality is sharp, so  \[v (\rho_{ij}(g)) =  f(v) \cdot (\lambda_i - \lambda_j) \] for all $v$. This implies that $f(v) \cdot (\lambda_i - \lambda_j)$ is an integer, and in particular that $a \cdot (\lambda_i -\lambda_j)$ is an integer. However, since $a$ is generic by assumption, we can certainly assume that $ a \cdot (\lambda_i -\lambda_j)$ is not an integer for any pair $(i,j)$ for which  $\lambda_i \ne \lambda_j$.
\vskip 5pt

We conclude that, for $g \in U$, $\rho_{ij}(g) =0$ unless $\lambda_i = \lambda_j$. In other words, $\rho(g)$ commutes with $\rho(S)$, and thus $g \in T(K)$. 
Furthermore, for a pair $(i,j)$ with $\rho_{ij}(g) \neq 0$, we have 
\[v ( \rho_{ij}(g)) =   f(v) \cdot (\lambda_i - \lambda_j)  = f(v) \cdot 0 = 0 \] for all $v$.
 This shows that $\rho_{ij}(g) \in k$, and so $g \in T(k)$. We have thus shown that  $U  \cap G(K) = T(k)$, as desired.
\end{proof}
\vskip 5pt

The following is then obvious.

\begin{cor}\label{parahcor}  Under the hypotheses of Proposition \ref{parah}, one can choose Iwahori subgroups $I_0 \subset U_0$ and
$I_\infty \subset U_\infty$ such that, if $I_\infty(p) \subset I_\infty$ is the maximal pro-$p$ subgroup, then
$$G(K) \cap \prod_{x \neq 0,\infty} U_x \times I_\infty(p)\times I_0 = \{1\}.$$
\end{cor}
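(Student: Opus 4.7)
The plan is to reduce everything to Proposition \ref{parah} and then kill the remaining finite group $T(k)$ by a pro-$p$ order argument at the place $\infty$. Concretely, I would first choose any Iwahori subgroups $I_0 \subset U_0$ and $I_\infty \subset U_\infty$; these exist because $U_0$ and $U_\infty$ are parahoric subgroups, and every parahoric in the Bruhat-Tits building contains an Iwahori subgroup associated to any chamber in its closure.

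Since $I_0 \subset U_0$ and $I_\infty(p) \subset I_\infty \subset U_\infty$, the intersection
\[
H := G(K) \cap \Bigl(\prod_{x \neq 0,\infty} U_x \times I_\infty(p) \times I_0\Bigr)
\]
is contained in $G(K) \cap U$, which by Proposition \ref{parah} equals $T(k)$. Thus $H \subseteq T(k)$, and the only remaining task is to show that $T(k) \cap I_\infty(p) = \{1\}$ inside $G(K_\infty)$.

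The heart of the matter is a standard order argument. Since $T$ is a torus defined over the finite field $k = \FF_q$ of characteristic $p$, the group $T(k)$ is finite of order prime to $p$: indeed $T(\bar{k})$ is a quotient of $X_*(T) \otimes \bar{k}^\times$, which has no $p$-torsion in characteristic $p$, so $T(k)$ contains no nontrivial $p$-elements. On the other hand, $I_\infty(p)$ is by definition pro-$p$. Hence $T(k) \cap I_\infty(p)$ is simultaneously a finite group of order prime to $p$ and a $p$-group, forcing it to be trivial. Combining this with the previous paragraph yields $H = \{1\}$.

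I do not expect any real obstacle: the substantive work has already been carried out in Proposition \ref{parah}, and this corollary is merely the observation that replacing $U_\infty$ by the pro-$p$ radical of an Iwahori is enough to eliminate the finite residue $T(k)$. Note in passing that the factor $I_0$ plays no role in the argument: any subgroup of $U_0$ would suffice. Its inclusion in the statement is purely cosmetic, providing a symmetric Iwahori-level description at both $0$ and $\infty$ that will be convenient in the Poincar\'e series construction to come.
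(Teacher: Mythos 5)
Your proof is correct and is precisely the argument the paper has in mind (the paper only writes ``The following is then obvious,'' leaving implicit the reduction to Proposition \ref{parah} and the observation that $T(k)$, being of order prime to $p$, meets the pro-$p$ group $I_\infty(p)$ trivially). One tiny imprecision: for a torus $T$ over $k$, $T(\bar{k})$ is \emph{isomorphic} to $X_*(T)\otimes\bar{k}^\times$, not merely a quotient of it, but this does not affect the conclusion that $|T(k)|$ is prime to $p$.
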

\vskip 5pt

\begin{remark}\label{hyper}  Suppose $U_z = G(\CO_z)$ for $z \in \Gm(k)$.   Then in Proposition \ref{parah}, we can take $U_0 = I^+$ and $U_\infty = I^-$ to be respectively the upper and lower Iwahori subgroups; by this we mean that $U_0$ (resp. $U_\infty$) is the Iwahori subgroup corresponding to the positive (resp. negative) root system corresponding to our fixed Borel subgroup $B$.  Then 
$$G(K)\cap U \subset G(K) \cap \prod_x G(\CO_x).$$
The right-hand side is the group of global maps from $\PP^1_k$ to the affine group $G_k$.   Any such map must be constant, because $\PP^1$ is projective, and thus must belong to $G(k)$ because it is defined over $k$.   Thus 
$$G(K)\cap U = (U_0 \times U_\infty)\cap G(k) = T(k).$$
\end{remark}
\vskip 5pt

The open compact subgroups of $G(\mathbb{A}_K)$ built in this section will be used in the next section to constuct some Poincar\'e series.
\vskip 10pt

\section{ Proof of Theorem \ref{mainthm}(i)}  \label{S:mainthm}
This section is devoted to the proof of Theorem \ref{mainthm}(i). 
For the convenience of the reader, we repeat the statement here:
\vskip 5pt

\begin{thm}\label{mainthm7}
Suppose $F$ is the non-archimedean local field $k(\!(t)\!)$, where $k = \Fq$ is a finite field of order $q = p^m$ for some prime $p$.    Let $G$ be an unramified connected reductive  group over $F$ which is not a torus.  Assume $q > 5$ and  $\pi$ is an irreducible  supercuspidal representation of $G(F)$ that can be obtained as the induction of a representation of a compact open (modulo center) subgroup $U \subset G(F)$:
$$\pi \isoarrow {\rm c-Ind}_U^G(F) \tau$$
for some (irreducible) smooth representation $\tau$ of $U$ with coefficients in $\Qlb$. 
 
 \vskip 5pt
 
 Then we have:
 \[  \text{ $\CL^{ss}(\pi)$ is pure}  \Longrightarrow  \text{$f \circ \CL^{ss}(\pi)= f\circ \CL(\pi)$ is ramified,} \]
 where $f: {^L}G \longrightarrow {^L}G / Z(\hat{G})$ is the natural projection map. 
 \end{thm}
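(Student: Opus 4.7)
The plan is to argue by contradiction: assume $f \circ \CL^{ss}(\pi)$ is unramified, i.e.\ $\CL^{ss}(\pi)(I_F) \subset Z(\hat{G})$. Using the Poincar\'e series method of \cite{GLo}, combined with the open compact subgroup $U$ built in Proposition \ref{parah} and Corollary \ref{parahcor}, I would globalize $\pi$ to a cuspidal automorphic representation $\Pi$ of $G(\ad_K)$, with $K = k(t)$, such that: (a) $\Pi_v \cong \pi$ for every $v \in \Gm(k)$; (b) $\Pi_v$ is spherical for $v \notin |\PP^1(k)|$; (c) $\Pi_\infty^{I^-(p)} \neq 0$; and (d) $\Pi_0$ contains a vector transforming under a character $\alpha$ of $I^+/I^+(p) \cong T(k)$, with $\alpha$ to be chosen sufficiently regular at the end.

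By Theorem \ref{paramVL}, $\Pi$ has a global $L$-parameter $\CL(\Pi)$ defining a $\hat{G}$-local system on $\PP^1 \setminus |X|$ with $|X| \subset \{0,\infty\}$. Compose with a faithful representation $\tau$ of the adjoint $L$-group ${}^LG/Z(\hat{G})$ to obtain the $GL_N$-local system $\CV := \tau \circ f \circ \CL(\Pi)$. The contradiction hypothesis implies $\CV$ is unramified at each $v \in \Gm(k)$. Purity of $\CL^{ss}(\pi)$ descends through the adjoint representation: every irreducible constituent of $\tau \circ f$ restricted to $\hat{G}$ has trivial central character, so its Frobenius eigenvalues are ratios of weight-$w$ Weil numbers, hence pure of weight $0$. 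Deligne's theorem (Theorem \ref{pure}) then extends $\CV$ to a local system on $\Gm = \PP^1 \setminus \{0,\infty\}$. By (c) and (d), together with Theorem \ref{parab}(v), the local parameters $\CL^{ss}(\Pi_0)$ and $\CL^{ss}(\Pi_\infty)$ factor through $\hat{T}$, hence are tame and abelian. Corollary \ref{abelian} then decomposes $\CV$ over $\Gm$ as a direct sum of $1$-dimensional local systems, and Corollary \ref{locsys}(iii) promotes this to the statement that the image of $\CL(\Pi)$ lies in a maximal torus $\hat{T} \subset \hat{G}$.

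To conclude, I would exploit Kummer theory as developed in \S\ref{purity}. Each rank-one summand $\CL_i$ of $\CV$ is a tame character of $\pi_1(\Gm)$, and Lemma \ref{Frobenius-trace} gives $\CL_i(\operatorname{Frob}_z) = \CL_i(\operatorname{Frob}_1) \cdot \chi_i(z^{-1})$ for $z \in \Gm(k)$, where $\chi_i$ encodes the tame inertia character of $\CL_i$ at $0$. Packaging these pieces together, the Frobenius point $t_z := \CV(\operatorname{Frob}_z) \in \hat{T}^{ad}$ takes the form $t_z = t_1 \cdot \mu(z^{-1})$ for a homomorphism $\mu \colon \mu_{q-1} \to \hat{T}^{ad}$ determined by $\alpha$ via local class field theory (Lemma \ref{recip}). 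On the other hand, since $\Pi_v \cong \pi$ at every $v \in \Gm(k)$, the $\hat{G}^{ad}$-conjugacy class of $t_z$ does not depend on $z$, so each $t_z$ must lie in the finite Weyl orbit $W \cdot t_1$, which has size at most $|W|$. Hence the image of $\mu$ has at most $|W|$ elements. Choosing $\alpha$ sufficiently regular so that the order of $\mu$ exceeds $|W|$, which is possible under the standing hypothesis $q > 5$ for the unramified groups at hand (since $|T(k)|$ then contains a cyclic factor exceeding the order of the Weyl group's orbits), produces the desired contradiction.

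The main obstacle will be calibrating ``sufficiently regular'' uniformly across all unramified reductive $G$, and verifying that the Poincar\'e series construction of \cite{GLo} can be adapted to produce a globalization realizing the prescribed character $\alpha$ on the Iwahori vector at $0$; this is where the building-theoretic input of \S\ref{S:open} enters decisively. A secondary technical point is the careful matching, via local class field theory, between the character $\alpha$ of $T(k)$ and the Kummer homomorphism $\mu$ into $\hat{T}^{ad}$, which must be performed with enough precision that the Weyl-orbit counting gives a contradiction rather than just a numerical coincidence.
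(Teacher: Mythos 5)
Your setup mirrors the paper's: globalize via the Poincar\'e series of \S\ref{S:open} with prescribed Iwahori data at $0$ and $\infty$ and $\Pi_v \cong \pi$ at $v \in \Gm(k)$; invoke purity and Deligne's Theorem \ref{pure} to extend the associated local system to $\Gm$; and use Corollary \ref{abelian} together with Corollary \ref{locsys}(iii) to force the image of $\CL(\Pi)$ into a maximal torus. The shared pivot with the paper is the observation that the Frobenius conjugacy class at $z \in \Gm(k)$ is independent of $z$ (equivalently, that $z \mapsto \operatorname{tr}(\operatorname{Frob}_z, \CV)$ is constant). But your final step for extracting a contradiction is genuinely different from the paper's, and it contains a gap that makes the argument fail.

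You conclude by writing $t_z = t_1 \cdot \mu(z^{-1})$ with $\mu\colon \mu_{q-1} \to \hat{T}^{ad}$ and then arguing: since all $t_z$ lie in the single Weyl orbit $W \cdot t_1$, the image of $\mu$ has at most $\lvert W \rvert$ elements, so one need only pick $\alpha$ making the order of $\mu$ exceed $\lvert W \rvert$. The problem is that $\mu$ is a homomorphism out of the cyclic group $\mu_{q-1}$, so its image is cyclic of order dividing $q-1$. To exceed $\lvert W \rvert$ you would need $q - 1 > \lvert W \rvert$, which is not implied by $q > 5$ and in fact fails for nearly every group of interest: already for $SL_3$ one has $\lvert W \rvert = 6 \geq q-1$ when $q = 7$, and for groups like $E_8$ the gap is astronomical. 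The parenthetical justification --- that $T(k)$ contains a large cyclic factor --- does not help, because the relevant cyclic group is the image of $\mu$, which is bounded by $q-1$ regardless of the rank of $T$. So a pure cardinality count of the Weyl orbit cannot yield the stated uniform bound $q > 5$.

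The paper sidesteps this by choosing $\alpha$ very specifically (a sum of highest roots, one from each Galois orbit of simple factors) and then running a character-orthogonality computation (equation \eqref{Fourier-analysis}): summing $\mu(z)^{-2} \operatorname{tr}(\operatorname{Frob}_z, \CV(\Pi,V))$ over $z \in k^\times$ kills everything except the summands with $\langle \alpha, \beta_j\rangle \equiv 2 \pmod{q-1}$, and because the pairings $\langle \alpha, \beta_j\rangle$ are forced to lie in $\{-2,-1,0,1,2\}$ with $2$ attained only at the highest root, the condition $q > 5$ (i.e.\ $q-1 > 4$) isolates a single nonvanishing term and produces a contradiction independently of $\lvert W \rvert$. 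This is where the arithmetic of root pairings is essential; the crude Weyl-orbit count discards exactly the information needed to make the bound on $q$ uniform. (A further wrinkle you do not address, but the paper handles by using the tensor product $V = \otimes_i \mathfrak{v}_i$ rather than a direct sum, is that $G$ may only be $k$-simple and not $\overline{k}$-simple; the ordinary induced representation would have identically vanishing Frobenius traces and be useless.) To repair your argument you would need to replace the orbit-size bound with the orthogonality relation and the explicit choice of $\alpha$ --- at which point you have reproduced the paper's proof.
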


\vskip 5pt

Because ${^L}G / Z(\hat{G})$ is the L-group of the simply-connected cover $G_{sc}$ of the derived group $G_{der}$ of $G$,  Theorem \ref{parab}(vi) implies (by applying it to the natural maps $G_{sc} \to G_{der} \to G$) that  it suffices to prove the above theorem under the additional hypothesis that $G$ is $k$-simple and simply-connected.

%
%
%
%
%
%
%
%
%
%
%
%
%

\subsection{Purity and Ramification} \label{ram1}
We shall prove Theorem \ref{mainthm}(i) by a global argument. Hence, we assume the setup and notation from \S \ref{SS:unramified}. In particular, $K = k(\PP^1) = k(t)$ and $G$ is an unramified simply connected simple  group over $k$. 

Before stating this theorem, we choose a special character of $T$. The adjoint representation of $\hat{G}$ splits as a sum of irreducible representations, each corresponding to a simple factor, up to isogeny, of $G_{\overline{k}}$. The group $W_k$ acts on these irreducible representations. Let $\mathfrak v_1,\dots, \mathfrak v_n$ be an orbit of this action.  Let $\hat{\alpha}_i $ be the highest weight of $\mathfrak v_i$, a coroot of $G$, and let $\alpha_i$  be the corresponding root of $G$. Let $\alpha \in X^* (T) $ be given by $\alpha = \sum_{i=1}^n \alpha_i$. Then since the action of $W_i$ permutes the $\mathfrak v_i$s, it permutes the $\hat{\alpha}_i$s, and thus the $\alpha_i$s, so $\alpha$ is $W_k$-invariant.

 \vskip 5pt
 
 The following theorem is the key  step  in proving Theorem \ref{mainthm}(i).
  \vskip 5pt

\begin{thm}\label{notunramified}  Let  $F = k(\!(t)\!)$ with $k = \mathbb{F}_q$ and $q>3$, and let $\sigma$ be a supercuspidal representation of $G(F)$ (with $G$ unramified over $k$).   Let $K = k(\PP^1)$ and suppose there exists a cuspidal automorphic representation $\Pi$ of $G(\ad_K)$, with the following properties:
\begin{itemize}
\item[(a)]  At every $z \in \Gm(k) \subset \PP^1(k)$, $\Pi_z \isoarrow \sigma$.
\vskip 5pt

\item[(b)]  The representation $\Pi_\infty$ has a vector invariant under the pro-$p$ Iwahori subgroup $I_{\infty}(p)$ in Corollary \ref{parahcor}.
\vskip 5pt

\item[(c)]  The representation $\Pi_0$ is a constituent of a principal series $Ind_{B(F)}^{G(F)} \chi$, where
$\chi$ is a tame (i.e. depth 0) character of $T(F)$, whose restriction  $\chi_k$ to the subgroup $T(k) \subset T(F)$ arises  in the following way:
\[ \begin{CD} 
\chi_k:  T(k) @>\alpha>>  k^{\times} @>\mu>> \overline{\mathbb{Q}}_{\ell}^{\times} \end{CD} \]
  where $\mu$ is a faithful character of $k^{\times}$.
\vskip 5pt

\item[(d)]   The local components $\Pi_y$ of $\Pi$ for all $y \notin \PP^1(k)$ are $G(\mathcal{O}_y)$-unramified.
\end{itemize}
Then
\[ \text{$\CL^{ss}(\sigma)$ is pure} \Longrightarrow \text{$\CL^{ss}(\sigma)$ is ramified.} \]
 \end{thm}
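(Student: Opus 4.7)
My plan is a proof by contradiction, combining Deligne's purity extension theorem (Theorem \ref{pure}) with a Kummer-theoretic computation of Frobenius traces on the rank-one pieces of the global parameter. Assume that $\CL^{ss}(\sigma)$ is pure but unramified, fix a faithful algebraic representation $\tau\colon {}^LG \to GL(V_\tau)$, and consider the $\ell$-adic local system $\CV := \tau \circ \CL(\Pi)$ produced by Theorem \ref{paramVL}. By hypothesis (d) and Theorem \ref{paramVL}(i), $\CV$ is a local system on $\PP^1 \setminus \PP^1(k)$, and by Theorem \ref{paramVL}(iii) and hypothesis (a), its semisimplified local monodromy at every $z \in \Gm(k)$ equals $\tau \circ \CL^{ss}(\sigma)$, which by assumption is unramified with pure Frobenius eigenvalues. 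Deligne's Theorem \ref{pure} therefore extends $\CV$ to a local system on $\Gm = \PP^1 \setminus \{0, \infty\}$.

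Hypothesis (b) together with Theorem \ref{parab}(v) forces $\CL^{ss}(\Pi_\infty)$ to factor through ${}^LT$ and be tamely ramified, and the same conclusion at $0$ follows from hypothesis (c) and Theorem \ref{parab}(iv). In particular the semisimplified local monodromy at $0$ is abelian and Frobenius-semisimple, so Corollary \ref{abelian} (after possibly enlarging the coefficient field) decomposes $\CV$ as a direct sum of rank-one tame local systems on $\Gm$, and Corollary \ref{locsys}(iii) places the image of $\CL(\Pi) \cap \hG$ inside a maximal torus $\hat{T} \subset \hG$. Accordingly we may write $\CV = \bigoplus_{\lambda} \CV_\lambda$, indexed by the weights $\lambda \in X^*(\hat T) = X_*(T)$ of $\tau$.

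The core step is a Kummer identification of each $\CV_\lambda$. By Theorem \ref{parab}(iv) combined with local class field theory for unramified tori, the inertia character of $\CV_\lambda$ at $0$ is the restriction to $k^\times = \CO_0^\times/(1+t\CO_0)$ of $\chi \circ \lambda\colon F^\times \to \Qlb^\times$; hypothesis (c) shows this character is $c \mapsto \mu(c)^{\langle \alpha, \lambda \rangle}$, so in the notation of Lemma \ref{Frobenius-trace} we obtain $\operatorname{tr}(m_z^{-1}, \CV_\lambda) = \mu(z)^{-\langle \alpha, \lambda \rangle}$. On the other hand, since $\Pi_z \cong \sigma$ for every $z \in \Gm(k)$ and $\CL^{ss}(\sigma)$ is (by assumption) unramified, $\operatorname{tr}(\operatorname{Frob}_z, \CV_\lambda)$ is independent of $z$. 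Combining these via Lemma \ref{Frobenius-trace} forces $\mu^{\langle \alpha, \lambda \rangle}$ to be trivial on $k^\times$, equivalently $(q-1)\mid \langle \alpha, \lambda \rangle$, for every weight $\lambda$ of every faithful $\tau$.

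The main obstacle is the root-system step that converts this divisibility into a contradiction once $q > 3$. Since $\alpha$ is a nonzero sum of highest roots indexed by a $W_k$-orbit of simple factors of $\hG_{\overline k}$, one chooses $\tau$ to be a well-chosen small faithful representation --- for example a fundamental representation of one simple factor, trivial on the others --- so that some weight $\lambda$ of $\tau$ satisfies $0 < |\langle \alpha, \lambda \rangle| < q-1$. This reduces to a finite case check on Dynkin types: pairings of a highest root with a fundamental coweight are bounded by small absolute integers (at most $6$ for exceptional types), so $q \geq 4$ always suffices once $\tau$ is chosen appropriately, yielding the desired contradiction.
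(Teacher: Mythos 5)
Your opening moves match the paper's: assume the parameter is pure and unramified, extend the local system across $\Gm(k)$ by Deligne, use the tame abelian monodromy at $0$ coming from (b), (c), and invoke Corollary \ref{abelian} to split into rank-one pieces. The endgame, however, contains a gap that is in fact the whole point of the argument, plus two secondary issues.

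The gap is the assertion that $\operatorname{tr}(\operatorname{Frob}_z, \CV_\lambda)$ is independent of $z \in \Gm(k)$ for each individual rank-one summand $\CV_\lambda$. What the hypothesis $\Pi_z \cong \sigma$ gives is that the \emph{conjugacy class} of $\CL(\Pi)(\operatorname{Frob}_z)$ -- equivalently the multiset of Frobenius eigenvalues, equivalently $\operatorname{tr}(\operatorname{Frob}_z, \CV)$ for the \emph{total} local system -- is independent of $z$. It does not give constancy of each eigenvalue slot: as $z$ moves, the eigenvalue of $\operatorname{Frob}_z$ on the fixed line $(\CV_\lambda)_{\bar z}$ may wander through the multiset. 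Indeed, by Lemma \ref{Frobenius-trace} that eigenvalue equals $\operatorname{tr}(\operatorname{Frob}_1,\CV_\lambda)\cdot\mu(z)^{-\langle\alpha,\lambda\rangle}$, which is constant \emph{if and only if} $(q-1)\mid\langle\alpha,\lambda\rangle$ -- so your "forcing" step assumes precisely what it is supposed to prove. The paper never makes this claim; it only uses constancy of the total trace, plugs in the decomposition into the $\mathcal L_j$'s via Lemma \ref{Frobenius-trace}, and then does a character sum over $z\in k^\times$ (equation \eqref{Fourier-analysis}) to isolate a single residue class. The extraction of a contradiction is genuinely a Fourier-analytic step, not a pointwise one.

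Two further points. First, choosing $\tau$ to be "a fundamental representation of one simple factor, trivial on the others" does not in general define a representation of ${}^LG = \hat G\rtimes W_k$: when $W_k$ permutes the simple factors nontrivially, a representation supported on one factor is only $\hat G\rtimes W_{k_n}$-stable after the base extension $k_n/k$. The paper handles this by using the \emph{tensor induction} $V=\otimes_i\mathfrak v_i$ (which is a genuine ${}^LG$-representation) over $k$ and passing to $k_n$ only when decomposing into rank-one pieces; it even remarks explicitly that the ordinary induction would have identically vanishing $\operatorname{Frob}_z$-trace for $n>1$, which is why tensor induction is essential. Second, even waiving the first gap, your final finite-case check is not shown to close: the conclusion you need is not just that some weight $\lambda$ of $\tau$ has $0 < |\langle\alpha,\lambda\rangle| < q-1$, but (after the Fourier analysis) that some nonzero residue class mod $q-1$ is hit by an \emph{odd number, ideally exactly one}, of weights $\lambda$ of $\tau$. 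Fundamental representations have weight multiplicities and many weights with the same pairing against $\alpha$, so this is not automatic. The paper's choice of $V$ as the tensor-induced adjoint pieces and $\alpha = \sum\alpha_i$ as a sum of highest roots keeps the pairings $\langle\alpha,\beta_j\rangle$ in $\{-2,-1,0,1,2\}$, with the value $2$ achieved by exactly one weight (the highest root); this is what makes the argument close for $q>5$ (note the theorem statement's $q>3$ is optimistic relative to the proof given, which really uses $q>5$). Your bound "at most $6$" for exceptional types is both larger and not single-valued, so it does not yield a contradiction in the range $q\in\{4,5,7\}$ without additional work.
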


\begin{proof}   Assume  $\CL^{ss}(\sigma)$ is unramified; we shall derive a contradiction.
\vskip 5pt

Applying Lafforgue, let 
\[  \CL(\Pi):  {\rm Gal}(K^{sep}/K) \longrightarrow {^L}G = \hG \rtimes W_k \]
 be the global Galois representation associated to $\Pi$  as in Theorem \ref{paramVL}.  By property (d), $\CL(\Pi)_x$ is unramified for all places $x$ of $\PP^1$ outside of $\PP^1(k)$.
 On the other hand, by property (b) and (c), it follows by Theorem 
 \ref{parab} (iv) that $\CL(\Pi)^{ss}_0$ and $\CL(\Pi)^{ss}_{\infty}$ are tamely ramified. 
For $z \in \Gm(k)$, with decomposition group
 $\Gamma_z \subset \pi_1(\PP^1 \setminus |\PP^1(k)|)$, it follows by property (a) and Theorem \ref{paramVL}(iii) that
\begin{equation}\label{locz}
\CL(\Pi)^{ss}_z \isoarrow \CL^{ss}(\sigma).
\end{equation} 
By our hypothesis, $\CL(\Pi)^{ss}_z$ is thus unramified for all $z \in \mathbb{G}_m(k) = k^{\times}$. 
 
\vskip 5pt
 
We shall now examine  in greater detail the restriction of $\CL(\Pi)$ to the local decomposition group $\Gamma_0$ at $0 \in k$. 
 By property (c) and Theorem  \ref{parab} (iv), one has
\[ \CL(\Pi)^{ss}_0 : \Gamma_0 = {\rm Gal}(F^{sep}/F) \longrightarrow {^L}T \longrightarrow {^L}G \] 
where the first map is the L-parameter of the character $\chi$ of $T(F)$.  By the discussion in \cite[\S 4.3]{DR}, one has
\[   {\rm Hom}(T(k), \overline{\mathbb{Q}}_l^{\times})  \cong {\rm Hom}_{{\rm Frob}}(I_t , \hat{T}) \cong {\rm Hom}_{{\rm Frob}}(k_n^{\times}, \hat{T})  \]
where $k_n = \mathbb{F}_{q^n}$ is the splitting field of $T$ over $k$, $I_t$ is the tame inertia group of $\Gamma_0$ and ${\rm Hom}_{{\rm Frob}}$ stands for Frobenius-equivariant homomorphisms. Hence, the restriction of $\CL(\Pi)^{ss}_0$ to the inertia group (which factors to the tame inertia group)
determines and is determined by the restriction $\chi_k$ of $\chi$ to $T(k)$. The special form of $\chi_k$ given in property (c)  and the fact that $\alpha$ is fixed by ${\rm Frob}$ thus imply that the restriction of $ \CL(\Pi)^{ss}_0 $ to $I_t$ factors as:
 \begin{equation} \label{E:alphaIt}
   \begin{CD}
\CL(\Pi)^{ss}_0|_{I_t} :   I_t @>>> k^{\times} @>\mu>> \overline{\mathbb{Q}}_l^{\times}@>\alpha>> \hat{T} @>>> {^L}G. \end{CD}   \end{equation}
\vskip 5pt

Let  $V= \otimes_{i=1}^n \mathfrak v_i$ be the representation of  ${^L}G = \hat{G} \rtimes W_k$ where $\hat{G}$ acts on each $\mathfrak v_i$ and $W_i$ permutes the factors.
The composite $V \circ \CL(\Pi)^{ss}$ gives rise to a 
(semisimple) local system $\CV(\Pi,V)$ on $\PP^1 \setminus |\PP^1(k)|$ over $k$. 
 However, since $ \CL(\Pi)^{ss}_z$ is   unramified, the purity of $\CL^{ss}(\sigma)$ and Theorem \ref{pure} then imply that $\CV(\Pi,V)$ extends to a local system on $\Gm$ over $k$.   
 \vskip 5pt
 
 The representation $V$ is obtained as a tensor product of the representations $\mathfrak v_i$ of $\hat{G}$, which do not extend to representations of $\hat{G} \rtimes W_k$, but do extend to representations of $\hat{G} \rtimes W_{k_n}$ for $k_n$ the degree $n$ extension, since the stabilizer of $\mathfrak v_i$ in the action of $W_k$ on irreducible factors of the adjoint representation of $\hat{G}$ is $W_{k_n}$. Thus, we similarly have that the composite $\mathfrak v_i \circ \CL(\Pi)^{ss}\mid_{ {\rm Gal} (K^{sep} / k_n( t))}$ gives rise to a 
(semisimple) local system $\CV(\Pi,\mathfrak v_i)$ on $\PP^1 \setminus |\PP^1(k)|$ over $k_n$, which extends to a local system on $\Gm$ over $k_n$.

 The above discussion shows that the local system $\CV(\Pi,\mathfrak v_i)$ on $\GG_m$ over $k_n$ satisfies the hypotheses of Corollary \ref{abelian}. In particular:
 \begin{itemize}
 \item[(a)]   the local monodromy representations at $0$ and $\infty$ are tamely ramified;
 \item[(b)] the local monodromy representation at $0$ has abelian image. 
 \end{itemize}
 Corollary \ref{abelian} thus implies that $\CV(\Pi,\mathfrak v_i )$ breaks up as the sum of rank 1 local systems $\mathcal L_1, \dots, \mathcal L_N$ over $k_n$.   
 \vskip 5pt
 
  Applying Lemma \ref{Frobenius-trace} to the $\mathcal L_i$'s, and a standard formula for the trace on a tensor product representation of an operator that permutes the tensor factors, one sees that for each $z \in \mathbb{G}_m(k)$,
  \begin{equation}\label{trace-decomposition} \operatorname{tr}(\operatorname{Frob}_z, \CV(\Pi,V)) =   \operatorname{tr}(\operatorname{Frob}_z^n, \CV(\Pi,\mathfrak v_1)) = \sum_{j=1}^N \operatorname{tr}(\operatorname{Frob}_z, \mathcal L_j) = \sum_{j=1}^n \operatorname{tr}(\operatorname{Frob}_1^n, \mathcal L_j) \cdot \operatorname{tr}(m_z , \mathcal L_j).\end{equation}
 Here it is crucial that we consider the tensor induction instead of the usual induction because the trace of  $\operatorname{Frob}_z$ on the usual induction would be identically zero as soon as $n>1$ and so studying this trace would not be helpful. We take $i=1$ without loss of generality because $\operatorname{tr}(\operatorname{Frob}_z^n, \CV(\Pi,\mathfrak v_i))$ is independent of $i$. 
 
 Let us now evaluate $ \operatorname{tr}(m_z , \mathcal L_j)$. By (\ref{E:alphaIt}), and observing that the eigenvalues of $\hat{T}$ on $\mathfrak v_1 $ are the roots of $(\hG, \hat{T})$ contained in the simple factor corresponding to $\mathfrak v_1$, we see that
  \begin{equation} \label{E:tr}
    \operatorname{tr}(m_z , \mathcal L_j ) = \mu(z)^{ \langle \alpha, \beta_j \rangle }
    \end{equation}
 where  $\beta_j$ is either a root of $\hG$ (or equivalently a coroot of $G$) contained in $\mathfrak v_1$ or the zero element in $X^*(\hat{T})$. 
\vskip 5pt

Now  note that:
\vskip 5pt

\begin{itemize}
\item[(a)] $\mu$ is a character of order $q-1$ of $k^{\times}$;
\item[(b)]  the function
 \[  z \mapsto \operatorname{tr}(\operatorname{Frob}_z, \CV(\Pi,{V})) \]
 is a constant function of $z$, since the local representation at each $z \in \mathbb{G}_m(k)$ is just $\CL^{ss}(\sigma)$. 
\end{itemize}
By orthogonality of characters (of $k^{\times}$) and \eqref{trace-decomposition}, we thus have
\vskip 5pt

\begin{align} \label{Fourier-analysis}
 0 &= \sum_{z \in k^\times} \mu(z)^{-2} \cdot \operatorname{tr}(\operatorname{Frob}_z, \CV(\Pi,{V})) \notag \\
 &=   \sum_{z \in k^\times} \mu(z)^{-2} \cdot \sum_{j=1}^N \operatorname{tr}(\operatorname{Frob}_1^n, \mathcal L_j) \cdot \mu(z)^{ \langle \alpha, \beta_j \rangle } \notag \\
 &=  \sum_{j=1}^N  \operatorname{tr}(\operatorname{Frob}_1^n, \mathcal L_j)  \cdot  \sum_{z \in k^\times} \mu(z)^{-2} \cdot \mu(z)^{ \langle \alpha, \beta_j \rangle }  \notag \\
 &= (q-1) \cdot \sum_{\substack{ j: \\ \langle \alpha, \beta_j \rangle \equiv 2 \mod (q-1) }} \operatorname{tr}(\operatorname{Frob}_1^n, \mathcal L_j) .
 \end{align}
\vskip 5pt

\noindent Furthermore we we have \[\langle \alpha, \beta_j \rangle = \langle \sum_{i=1}^n \alpha_i, \beta_j \rangle =\sum_{i=1}^n \langle \alpha_i,\beta_j\rangle = \langle \alpha_1,\beta_j \rangle \] since for $i \neq 1$, $\alpha_i$ is a root of a different simple factor from $\beta_j$ and thus is orthogonal to $\beta_j$.
 
Now one of the $\beta_j$'s is the highest weight $\alpha_1^\vee$  of $\mathfrak v_1$, for which one has $\langle \alpha, \alpha_1^{\vee} \rangle = \langle \alpha_1,\alpha_1^\vee\rangle=2$. For all other $\beta_j$, one has
\[  -2 \leq \langle \alpha, \beta_j \rangle  < 2. \]
Hence, since $\langle \alpha, \beta_j \rangle \in \mathbb{Z}$, we have 
\[   \langle \alpha, \beta_j \rangle \in \{-2,-1,0,1\}. \]
Since we have assumed $q > 5$, this implies that there is only one term  appearing in the sum in (\ref{Fourier-analysis}), namely the term corresponding to the highest root $\alpha_1^{\vee}$ of $\hG$.  
Hence, we deduce the desired contradiction:
\[  (q-1) \cdot \operatorname{tr}(\operatorname{Frob}_1^n, \mathcal L_j)   = 0 \]
which is impossible as $\mathcal L_j$ is a one-dimensional representation. 
\vskip 5pt

We have thus completed the proof of Theorem \ref{notunramified}.

\end{proof}

\vskip 5pt

\subsection{ Poincare series I}  \label{poincaresplit}
We continue with the proof of Theorem \ref{mainthm}(i), under the hypothesis that $G$ is semisimple and simply-connected.
Let  $F = k(\!(t)\!)$ and let $\sigma$ be a supercuspidal representation of $G(F)$.    We say $\sigma$
 contains an $\mathfrak{s}$-type, if there is an open compact subgroup $U^* \subset G(F)$ and an irreducible
 representation $\rho$ of $U^*$ such that 
 \begin{equation}\label{cind}  \sigma \isoarrow c-Ind_{U^*}^{G(F)} \rho.
 \end{equation}

\begin{thm}\label{poin} Let $K = k(\PP^1)$.   Let $F = k(\!(t)\!)$ and let $\sigma$ be a supercuspidal representation of $G(F)$.    Suppose 
\begin{equation}\label{type}
\sigma \text{ contains an } \mathfrak{s}-\text{type} \,\,  (U^*,\rho).  
\end{equation}
Then there exists a cuspidal automorphic representation $\Pi$ of $G(K)$
 satisfying the conditions (a)-(d)  of Theorem \ref{notunramified}.
  \end{thm}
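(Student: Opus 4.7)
The plan is to construct $\Pi$ via a Poincar\'e series argument, following the template of \cite{GLo}, using the open compact subgroup $U \subset G(\mathbb{A}_K)$ produced in Corollary \ref{parahcor}. The key input is the equality $G(K) \cap U = \{1\}$ from that corollary, which guarantees that $U$-supported test functions average via $G(K)$ to give non-vanishing Poincar\'e series with controlled global support.

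First I would fix a test function $f = \bigotimes_v f_v \in C_c^\infty(G(\mathbb{A}_K))$ whose local factors encode conditions (a)--(d). At each $z \in \mathbb{G}_m(k)$, take $f_z$ to be a non-zero matrix coefficient of $\rho$, supported in $U^*$; by Frobenius reciprocity applied to the compact induction $\sigma \cong {\rm c-Ind}_{U^*}^{G(F)}\rho$, convolution with $f_z$ acts as a scalar multiple of the projection onto the $\rho$-isotypic piece of any smooth $G(F)$-module, and any irreducible representation with non-zero image is forced to be isomorphic to $\sigma$. At $y \notin \PP^1(k)$ take $f_y = \mathbf{1}_{G(\mathcal{O}_y)}$; at $\infty$ take $f_\infty = \mathbf{1}_{I_\infty(p)}$. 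At $0$, take $f_0$ to be the pullback of the character $\chi_k^{-1}$ of $T(k)$ along the reduction $I_0 \twoheadrightarrow T(k)$, extended by zero outside $I_0 = U_0$; by Theorem \ref{parab}(iv)--(v), any irreducible $G(F)$-representation containing the $(I_0,\chi_k)$-isotypic vector cut out by $f_0$ is a constituent of a tame principal series with the prescribed $T(k)$-restriction.

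Next, form the Poincar\'e series $P_f(g) = \sum_{\gamma \in G(K)} f(\gamma g)$, viewed as a smooth function on $G(K)\backslash G(\mathbb{A}_K)$. By Corollary \ref{parahcor} the support of $f$ meets $G(K)$ in $\{1\}$, so $P_f(1) = f(1) = \prod_v f_v(1) \neq 0$ by construction; in particular $P_f \not\equiv 0$. Supercuspidality of $\sigma$ then forces the $G(\mathbb{A}_K)$-subrepresentation generated by $P_f$ to decompose as a sum of cuspidal automorphic representations, and the construction of $f$ ensures that every irreducible constituent $\Pi$ satisfies conditions (a)--(d).

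The principal technical obstacle is the \emph{sharpness} of the local identifications forced by the $f_v$: one must show $\Pi_z \cong \sigma$ (not merely that $\Pi_z$ contains a $\rho$-vector) and that $\Pi_0$ is a constituent of $\mathrm{Ind}_{B(F)}^{G(F)}\chi$ with exactly the prescribed character $\chi_k$ on $T(k)$. The first is handled by the compact-induction presentation of $\sigma$ together with Frobenius reciprocity as noted above. The second follows from Theorem \ref{parab}(v) combined with the fact that $f_0$ is a character idempotent for $I_0$ inflated from $T(k)$. Finally, the central character of $\sigma$ is absorbed by choosing $f$ to transform appropriately under the finite central subgroup $Z(K) \subset Z(\mathbb{A}_K)$, which is harmless since $G$ is simply connected and hence $Z(G)$ is finite.
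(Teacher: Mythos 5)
Your proposal takes essentially the same route as the paper: at $z\in\mathbb{G}_m(k)$ use a matrix coefficient of $\sigma$ supported in $U^*$ and non-vanishing at the identity, at $\infty$ the characteristic function of $I_\infty(p)$, at $0$ the inflation of $\chi_k$ from $T(k)$ to $I_0$, and the hyperspecial characteristic functions elsewhere; form the Poincar\'e series and quote Corollary \ref{parahcor} to conclude $P_f(1)\neq 0$, then extract $\Pi$ from the spectral decomposition. The only cosmetic differences are your use of $\chi_k^{-1}$ versus the paper's $\chi_k$ (a harmless normalization choice) and the additional explicit discussion of why the matrix coefficient pins down $\Pi_z\cong\sigma$ and why $\Pi_0$ lands in the prescribed tame principal series, both of which the paper leaves implicit via the reference to \cite{GLo}.
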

  
  \begin{proof}  The construction follows the method of \cite{GLo}.   Hypothesis \eqref{type} implies that $\sigma$ has a matrix coefficient $\varphi^*$ with support
  in $U^*$, with the property that $\varphi^*(1) = 1$.  
  Using this $U^*$, let $U = \prod_x U_x \subset G(\ad_K)$ be the open compact subgroup constructed  in Proposition \ref{parah}.  
  Define a function $\varphi = \prod_x \varphi_x:  G(\ad_K) \ra \CC$ as follows:
  \begin{itemize}
\item[(a)]  At every $z \in \Gm(k) \subset \PP^1(k)$, $\varphi_z = \varphi^*$;
\vskip 5pt
 
\item[(b)]  $\varphi_\infty$ is the characteristic function of the group $I_\infty(p)$ of Corollary \ref{parahcor};
\vskip 5pt
\item[(c)]  $\varphi_0$ is the character $\chi_k:  I_0/I_0(p) \cong T(k) \ra \CC^\times$, where $\chi_k$ is as in the proof of 
Theorem \ref{notunramified};
\vskip 5pt
\item[(d)]  For $x \notin |\PP^1(k)|$, $\varphi_x$ is the characteristic function of  the hyperspecial maximal compact subgroup $G(\CO_x)$;
\end{itemize}
 \vskip 5pt
 
 \noindent Now define the Poincar\'e series $P_\varphi:  G(K)\backslash G(\ad_K) \ra \CC$:
 $$P_\varphi(g) = \sum_{\gamma \in G(K)} \varphi(\gamma\cdot g).$$
 The sum converges absolutely, as in \cite{GLo}.
 It follows from Corollary \ref{parahcor} that
 $$P_\varphi(1) = \prod_{z \in \Gm(k)} \varphi^*(1) = 1.$$
 In particular, $P_\varphi \neq 0$. The spectral decomposition of $P_{\varphi}$ yields the desired cuspidal automorphic representation $\Pi$.
  
  \end{proof}
  
  \begin{cor}\label{maincor}  Let $G$ be an unramified reductive group over a non-archimedean local field $F$ of characteristic $p$, with constant field $k$ of order $q > 5$.  Suppose that $G$ is not a torus and $p$  does not divide the order of the Weyl group of $G$. Then  if $\sigma$ is pure and $q > 5$,  $f \circ \CL^{ss}(\sigma)$ is ramified (where $f: {^L}G \to {^L}G/Z(\hat{G})$). In particular, Theorem \ref{mainthm}(i) holds.


 \end{cor}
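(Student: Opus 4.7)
The plan is to assemble this corollary directly from Theorems \ref{poin} and \ref{notunramified} via one preliminary reduction and one input from the literature. First, I would use Theorem \ref{parab}(vi) applied to the two natural isogenies $G_{sc} \to G_{der} \to G$, as the authors note immediately after Theorem \ref{mainthm7}: since ${}^LG/Z(\hat G)$ is the $L$-group of $G_{sc}$, this compatibility under isogeny allows me to assume $G$ is simply-connected (and, by treating $k$-simple factors one at a time, $k$-simple). In that case $Z(\hat G)$ is trivial, $f$ becomes the identity, and it suffices to show that $\CL^{ss}(\sigma)$ itself is ramified.

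Next, the hypothesis $p \nmid |W|$ is precisely the condition under which Fintzen's theorem (referenced in the introduction right after Theorem \ref{mainthm}) guarantees that every irreducible supercuspidal representation of $G(F)$ is compactly induced from an irreducible smooth representation of an open compact-modulo-center subgroup. Hence $\sigma$ contains an $\mathfrak{s}$-type in the sense of \eqref{type}, and I would feed this $\mathfrak{s}$-type into Theorem \ref{poin} to obtain a cuspidal automorphic representation $\Pi$ of $G(\mathbb{A}_K)$ with $K = k(\PP^1)$ satisfying conditions (a)--(d) of Theorem \ref{notunramified}. Since $\sigma$ is pure by assumption and $q>5$ (in particular $q>3$), Theorem \ref{notunramified} then forces $\CL^{ss}(\sigma)$ to be ramified, as required. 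The final equality $\CL^{ss}(\pi) = \CL(\pi)$ asserted in Theorem \ref{mainthm}(i) is then supplied by the uniqueness of the essentially tempered refinement established in Theorem \ref{T:Lparameter}, since a supercuspidal is in particular a discrete series representation.

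There is no genuinely hard step specific to the corollary; the substantive work was already done in the proofs of Theorems \ref{notunramified} and \ref{poin}, which in turn rest on Deligne's purity of the monodromy weight filtration, Kummer theory on $\PP^1$, the parahoric entry inequality of Lemma \ref{parahoric-coordinates-lemma}, and the Poincar\'e series construction. The only mildly delicate bookkeeping is checking that the reduction from $G$ to $G_{sc}$ preserves both the supercuspidality and the purity of the local parameter. Supercuspidality of any irreducible constituent of the restriction is standard, while purity is a Tannakian condition: if some (equivalently every) faithful representation of ${}^LG$ produces Frobenius eigenvalues of common weight on $\CL^{ss}(\sigma)$, then the same holds for any composition ${}^LG \to {}^LG_{sc} \to GL(N)$ with a faithful representation of ${}^LG_{sc}$, since that composition is a (generally non-faithful) representation of ${}^LG$ and purity passes to all representations.
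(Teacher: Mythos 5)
Your proposal matches the paper's own proof, which is essentially a one-line assembly: reduce to $G$ simply-connected via Theorem \ref{parab}(vi), invoke Fintzen's result \cite{FiPreprint} to obtain an $\mathfrak{s}$-type, and conclude from Theorems \ref{poin} and \ref{notunramified}. The one place where your elaboration is imprecise is the claim that ``purity passes to all representations'': strict purity of $\tau\circ\CL^{ss}(\sigma)$ for a faithful $\tau$ does \emph{not} imply that every non-faithful $\tau'$ of ${}^LG$ yields eigenvalues of a single common weight (e.g.\ $\tau\oplus\mathbf{1}$ generally does not); what actually saves the reduction is that any faithful representation of ${}^LG_{sc}$ factors through $\mathrm{Ad}$, and $\mathrm{Ad}\circ\CL^{ss}(\sigma)$ is a summand of $\tau\otimes\tau^\vee\circ\CL^{ss}(\sigma)$, which is strictly pure of weight $0$ whenever $\tau\circ\CL^{ss}(\sigma)$ is strictly pure of any weight. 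Also, the final equality $\CL^{ss}(\pi)=\CL(\pi)$ is not ``supplied by uniqueness'' alone: uniqueness (Theorem \ref{T:Lparameter}) pins down the pure Weil--Deligne completion $(\CL^{ss}(\pi),N)$, but one must additionally observe that strict purity of $\CL^{ss}(\pi)$ forces $N=0$ in that completion, since a nonzero $N$ would produce Frobenius eigenvalues of distinct weights.
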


\begin{proof}  
As mentioned at the beginning of the section, we may assume $G$ is simply-connected. 
Thanks to the results of \cite{FiPreprint}, the supercuspidal representation $\sigma$ of $G(F)$ contains an $\mathfrak{s}$-type by our hypothesis on $p$.  The claim now follows immediately from Theorems \ref{notunramified} and \ref{poin}.  
\end{proof}
\vskip 5pt

\begin{cor}\label{sph}  Let $F = k((t))$ and let $\sigma$ be an irreducible representation of $G(F)$, with $G$ unramified reductive over $F$.  
Suppose the Genestier-Lafforgue parameter $\CL^{\ss}(\sigma)$ is pure and unramified.  Suppose every supercuspidal representation of every Levi factor of $G(F)$ contains an $\mathfrak{s}$-type; this is the case in particular if $p$ does not divide the order of the Weyl group of $G$.  
Then $\sigma$ is an irreducible constituent of an unramified principal series representation of $G(F)$.

\end{cor}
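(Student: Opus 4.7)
My plan is to reduce the statement to Theorem~\ref{mainthm}(i) applied not to $G$ itself but to the Levi subgroup that carries the supercuspidal support of $\sigma$.

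First, by the Bernstein--Zelevinsky theory of supercuspidal support, $\sigma$ is an irreducible sub-quotient of $\mathrm{Ind}_{P(F)}^{G(F)}\tau$ for some $F$-rational parabolic $P = MN \subset G$ and some supercuspidal representation $\tau$ of $M(F)$; the Levi $M$ is itself unramified because $G$ is. By the compatibility of $\mathcal{L}^{ss}$ with parabolic induction (Theorem~\ref{parab}(iv)),
\[
\mathcal{L}^{ss}(\sigma) \;=\; i_M\circ\mathcal{L}^{ss}(\tau).
\]
Because $i_M: {}^LM \hookrightarrow {}^LG$ is an inclusion of groups, the pullback of any faithful representation of ${}^LG$ along $i_M$ is a faithful representation of ${}^LM$. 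From this one deduces that both the unramifiedness and the purity hypotheses on $\mathcal{L}^{ss}(\sigma)$ descend to $\mathcal{L}^{ss}(\tau)$.

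The crux is then to suppose for contradiction that $M$ is not a torus. Under this hypothesis the pair $(M,\tau)$ satisfies every assumption of Theorem~\ref{mainthm}(i): $M$ is an unramified reductive group that is not a torus, $\tau$ is a supercuspidal representation of $M(F)$ whose Genestier--Lafforgue parameter is pure, and $\tau$ contains an $\mathfrak{s}$-type by the standing hypothesis of the corollary. Running the globalization of Theorem~\ref{poin} together with the ramification argument of Theorem~\ref{notunramified}, with $M$ in place of $G$ and $\tau$ in place of $\sigma$, forces $f_M\circ\mathcal{L}^{ss}(\tau)$ to be ramified, where $f_M: {}^LM \to {}^LM/Z(\hat M)$ is the natural projection. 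This contradicts the unramifiedness of $\mathcal{L}^{ss}(\tau)$. Hence $M$ must be a torus, and since the only Levi subgroup of a connected reductive group with no roots is a maximal torus, $M = T$.

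At that point $\tau$ is a smooth character of $T(F)$ whose Langlands parameter into ${}^LT$ is unramified. By local class field theory for the unramified torus $T$ this is equivalent to $\tau$ being trivial on the maximal compact subgroup of $T(F)$, i.e.\ to $\tau$ being an unramified character; so $\sigma$ is an irreducible constituent of the unramified principal series $\mathrm{Ind}_{B(F)}^{G(F)}\tau$, as required. The main point that requires some care is the descent of purity along $i_M$: it rests on the fact, implicit throughout \S\ref{pureWD}, that purity of a parameter valued in ${}^LH$ is independent of the faithful test representation of ${}^LH$, combined with the observation that faithfulness is preserved by pullback along the Levi embedding $i_M$. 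Neither step is deep, but both should be checked cleanly before being invoked.
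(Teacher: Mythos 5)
Your proof is correct and takes essentially the same route as the paper: both extract the supercuspidal support $(M,\tau)$, use compatibility of $\CL^{ss}$ with parabolic induction to transfer unramifiedness (and purity) to $\CL^{ss}(\tau)$, and then invoke Theorem~\ref{mainthm}(i) (in the paper, via Corollary~\ref{maincor}) applied to $(M,\tau)$ to conclude $M$ is a torus and $\tau$ an unramified character. The one point you spell out more carefully than the paper --- that purity descends along $i_M$ because the pullback of a faithful representation of ${}^LG$ along the Levi embedding is a faithful representation of ${}^LM$ --- is correct and is left tacit in the printed proof, which only mentions unramifiedness explicitly even though Corollary~\ref{maincor} also needs purity as an input.
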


\begin{proof}  In any case $\sigma$ is an irreducible constituent of a representation of the form $Ind_{P(F)}^{G(F)} \tau$ for some parabolic subgroup $P(F) = L(F)\cdot N(F) \subset G(F)$, where $L(F)$ is a Levi subgroup of $P(F)$, $N(F)$ its unipotent radical,
and $\tau$ is a supercuspidal representation of $L(F)$.  Since the Genestier-Lafforgue parametrization is compatible with parabolic induction by Theorem \ref{parab}, it follows that $\CL^{ss}(\tau)$ is unramified.  Since $\tau$ is supercuspidal, it follows from Theorems \ref{poin} and \ref{notunramified} (or more simply Corollary \ref{maincor})  that $L$ must be a torus and $\tau$ an unramified character of $L(F)$. 
\end{proof}









\section{Proof of Theorem \ref{mainthm}(ii): Wild ramification}\label{wildram}

The goal of this section is to prove Theorem \ref{mainthm}(ii). For the convenience of the reader, we restate the result here:
\vskip 5pt
\begin{thm}\label{wild} 
Let $G$ be an unramified reductive group over a local field $F= k((t))$ of equal characteristic $p$,  and $J$ an open subgroup of a parahoric subgroup $G_a$. Let $\sigma$ be a representation of $J$ such that $\pi = \operatorname{c-Ind}_J^{G( F)} \sigma$ is irreducible. 
\vskip 5pt

If $J$ is sufficiently small and $|k| = q>2$, then    $f\circ \CL^{ss}(\pi)$ (and hence $f \circ \CL(\pi)$) is wildly ramified (where $f: {^L}G \to {^L}G/Z(\hat{G})$).
\end{thm}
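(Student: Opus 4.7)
The plan is to argue by contradiction, exactly mirroring the strategy of part (i): assume $f\circ\CL^{ss}(\pi)$ is at most tamely ramified and manufacture a cuspidal automorphic representation $\Pi$ on $G(\ad_K)$ with $K = k(\PP^1)$ whose resulting global Langlands-Lafforgue parameter is forced into a configuration incompatible with the induction data $(J,\sigma)$ when $J$ is sufficiently small.

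To build $\Pi$, I would adapt the Poincar\'e series recipe of Theorem \ref{poin}. Fix a faithful representation $\rho:G\to GL(V)$ as in \S\ref{SS:unramified} and take $U = \prod_x U_x \subset G(\ad_K)$ with $U_z$ a conjugate of $J$ at each $z\in\Gm(k)$, $U_\infty = I_\infty(p)$ the pro-$p$ Iwahori as in Corollary \ref{parahcor}, $U_0$ a suitable open subgroup of an Iwahori $I_0$, and $U_x = G(\CO_x)$ elsewhere. The hypothesis ``$J$ sufficiently small'' should be quantified by requiring $J \subset G_{a,r+}$ for a Moy-Prasad depth $r > 0$ large enough that the strict parahoric entry inequality \eqref{parahoric-entry-inequality-2} at each $z\in\Gm(k)$, combined with the non-strict inequalities \eqref{parahoric-entry-inequality} at $0$ and $\infty$, suffices to force $G(K)\cap U$ to be trivial. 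With $\varphi^*$ a matrix coefficient of $\sigma$ supported on $J$ and normalized by $\varphi^*(1)=1$, $\varphi_z=\varphi^*$, $\varphi_\infty=\mathbf{1}_{I_\infty(p)}$, $\varphi_0$ a regular tame character $\chi_k$ of $I_0/I_0(p)$ chosen as in the proof of Theorem \ref{notunramified}, and $\varphi_x$ the unit elsewhere, the resulting Poincar\'e series $P_\varphi$ is nonzero at the identity, and its spectral decomposition produces a cuspidal $\Pi$ with $\Pi_z\cong \pi$ for $z\in\Gm(k)$ and the prescribed local structure at $0$, $\infty$.

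Under the tameness assumption on $f\circ\CL^{ss}(\pi)$, the composition of $f\circ\CL(\Pi)$ with the tensor-induced representation $V=\bigotimes_i\mathfrak{v}_i$ from the proof of Theorem \ref{notunramified} gives a semisimple local system $\CV(\Pi,V)$ on $\PP^1\setminus|\PP^1(k)|$ that is tame at every $z\in\Gm(k)$ (by assumption), tame at $\infty$ (by Theorem \ref{parab}(v)), and tame at $0$ (by construction). Here, unlike in part (i), one cannot invoke Deligne's purity (Theorem \ref{pure}) to extend $\CV$ to a sheaf on $\Gm$ and apply Corollary \ref{abelian}, because the local monodromies at the $\Gm(k)$ points are merely tame rather than trivial. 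Instead, I would exploit the fact that these local monodromies are all simultaneously conjugate to a single semisimple operator attached to $\pi$ in order to set up a trace identity parallel to \eqref{trace-decomposition}--\eqref{Fourier-analysis}; combined with Grothendieck-Ogg-Shafarevich bounding the Euler characteristic of $\CV$ and an orthogonality relation among tame characters of $k^\times$, which is nontrivial precisely when $q > 2$, this should produce the desired contradiction.

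The principal obstacle is exactly this last step: working tamely but not unramified, one loses the extension to $\Gm$ that made the argument of Theorem \ref{notunramified} clean, and must instead extract the contradiction directly on $\PP^1\setminus|\PP^1(k)|$, using the rigidity of ``constant local monodromy'' over all points of $\PP^1(k)\cap\Gm$ together with the character orthogonality imposed by $\chi_k$ at $0$. The precise calibration of ``sufficiently small'' in terms of the faithful representation $\rho$ and the character $\chi_k$, so that the Poincar\'e series is non-vanishing and the orthogonality argument closes, is the technical core of the proof.
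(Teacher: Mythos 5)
Your proposal has a genuine gap, and it also diverges in several consequential ways from the paper's actual proof. The most important point: the contradiction mechanism you propose (Grothendieck–Ogg–Shafarevich on $\PP^1\setminus|\PP^1(k)|$ plus orthogonality of tame characters) is not carried out, and you acknowledge this yourself as the ``principal obstacle.'' Your globalization places $\pi$ at every $z\in\Gm(k)$; even under the tameness hypothesis, this leaves you with a local system on $\PP^1\setminus|\PP^1(k)|$ whose tame monodromies at those $q-1$ points you must somehow control globally, and the ``rigidity of constant local monodromy'' you invoke has no clear content in the absence of Deligne's purity theorem (which was the lever in part (i) and is unavailable here because you only have tameness, not purity). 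This step would need an actual argument, and nothing in the paper's toolbox directly supplies it.

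The paper avoids this entirely by using a \emph{two-point} globalization on $\Gm$: the supercuspidal $\pi$ is placed only at $0$, with a parahoric depending on $a\in X_*(S)\otimes\RR$, an Iwahori subgroup attached to a small perturbation $b$ of $-a$ at $\infty$, and hyperspecial maximal compacts everywhere else. The hypothesis ``$J$ sufficiently small'' is defined there as: the image of $J$ in the finite reductive quotient $G_a/G_{a,0+}$ has trivial intersection with some Borel. This is genuinely weaker and more flexible than your proposed $J\subset G_{a,r+}$; in fact the paper gives a separate argument (Proposition \ref{Gammap}, invoking purity) for the case $J\subset G_{a,0+}$. With the two-point setup, the key step is to construct \emph{two} Poincar\'e series — one from a matrix coefficient of $\sigma$, the other from a matrix coefficient of $\sigma\otimes\chi$ for a nontrivial character $\chi$ of the Iwahori torus quotient at $\infty$ (this is where $q>2$ enters) — yielding cuspidal $\Pi_1,\Pi_2$ unramified outside $\{0,\infty\}$, isomorphic to $\pi$ at $0$, but with $\Pi_{1,\infty}$ Iwahori-spherical and $\Pi_{2,\infty}$ not. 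Under the tameness hypothesis on $\CL^{ss}(\pi)$, both global parameters factor through $\pi_1^{\mathrm{tame}}(\Gm)$, and since the inertia at $0$ and $\infty$ coincide as subgroups of $\pi_1^{\mathrm{tame}}(\Gm)$ (both equal to the tame geometric fundamental group), agreement of the two parameters at $0$ forces agreement at $\infty$ — contradicting the fact that one is trivial and the other is $\chi\neq 1$ on inertia at $\infty$. This is the idea missing from your proposal, and it replaces the delicate trace computation you tried to import from Theorem \ref{notunramified}.
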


\vskip 5pt

\subsection{Sufficiently small subgroups}
Let us first give the precise definition of being ``sufficiently small", recalling that $G$ is defined over $k$.
\vskip 5pt

\begin{defn}
(i)  A finite subgroup $H$ of $G(k)$ is
 is \emph{sufficiently small} if there is a Borel $k$-subgroup $B$ of $G$ such that $B(k) \cap H$ is trivial.

\vskip 5pt

(ii)  For  a parahoric subgroup $G_a$ of $G(F)$, we say $J \subseteq G_a$ is \emph{sufficiently small} if its projection to the reductive group $G_a/G_{a,0+}$ is sufficiently small. 
\end{defn}

For example, the maximal unipotent subgroup of a Borel subgroup is a
 a sufficiently small subgroup of $G(k)$  (by the Bruhat decomposition) and  $G_{a,0+}$ is a sufficiently small subgroup of $G(F)$.
\vskip 5pt


    \vskip 5pt
    
 \subsection{ An open  compact subgroup of $G(\mathbb{A}_K)$)  }  
  As for Theorem \ref{mainthm}(i), we may and shall assume that $G$ is simply-connected when proving Theorem \ref{wild}. 
The proof will be via a global argument. Hence we will be working over $K = k(\PP^1)$ and make use of the notation of \S \ref{SS:unramified}.
We will need to construct appropriate globalizations of $\pi$ by Poincar\'e series and the purpose of this section is to construct appropriate open compact subgroups of $G(\mathbb{A}_K)$. 
 \vskip 5pt
 
 Recall from \S \ref{SS:buildings} that for $z \in \PP^1(k)$, the apartments  $\mathcal{A}(S_z)$ in the buildings $\mathcal{B}(G_z)$  can be naturally identified with $X_*(S) \otimes \RR$ and hence with each other. 
 Without loss of generality, we may assume that the parahoric subgroup $G_a$ in Theorem \ref{wild}  is associated to a point $a \in \mathcal{A}(S_0) = X_*(S) \otimes \RR$.
 Let $b$ be a small perturbation of the point $-a$ in $\mathcal{A}(S_{\infty})$. Specifically, we choose $b$ so that $a+b$ is a small rational multiple of a cocharacter which lies in the interior of the positive Weyl chamber of $X_*(S) \otimes \RR$.  Because $a+b$ does not lie on the walls of the Weyl chamber, $b$ does not lie on any wall of the apartment that also contains $a$, and because $a+b$ is small, $b$ does not lie on any wall of the apartment that doesn't contain $a$.  So $b$ lies in the interior of a Weyl alcove and thus gives rise to an Iwahori subgroup $G_b \subset G(K_{\infty})$.
 \vskip 5pt
 
 For each place $v$ of $K$, set
 \[  f(v) = \begin{cases}
  a, \text{  if $v = 0$;} \\
  b, \text{  if $v = \infty$;} \\
  0, \text{   for all other $v$.} \end{cases} \]
 Let 
 \[  C  = \prod_vC_{f(v)} \subset G(\mathbb{A}_K) \]
 be the associated  compact open subgroup of $G( \mathbb A_K ) $, so that for $v \notin \{0, \infty\}$, $C_v$ is the standard hyperspecial maximal compact subgroup 
 $G(\mathcal{O}_v)$.
 \vskip 5pt
 
  We now note:
 \begin{lemma}
 The natural map
  \[ C \cap G(K) \to C_a  \to G_a \to G_a/G_{a,0+}\]  
   is injective with image contained in a Borel subgroup of $G_a/ G_{a,0+}$.
    \end{lemma}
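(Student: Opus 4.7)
The plan is to apply Lemma~\ref{parahoric-coordinates-lemma} at every place of $K = k(\PP^1)$ to a faithful $S$-diagonalizable representation $\rho\colon G\to \operatorname{Aut}(V)$, with eigenbasis $(e_i)$ and weights $(\lambda_i)\in X^*(S)$. For any $g \in C \cap G(K)$ with $\rho_{ij}(g) \neq 0$, summing the local bounds $v_v(\rho_{ij}(g)) \geq f(v)\cdot(\lambda_i-\lambda_j)$ and using the product formula $\sum_v v_v(\rho_{ij}(g)) = 0$ yields
\[ 0 \geq (a+b)\cdot(\lambda_i-\lambda_j) = \epsilon\mu\cdot(\lambda_i-\lambda_j), \]
where $\mu$ is the regular dominant cocharacter with $a+b = \epsilon\mu$. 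Since $\mu$ lies in the interior of the positive Weyl chamber, $\rho(g)$ must vanish in every position with $\mu\cdot(\lambda_i-\lambda_j)$ of the wrong sign, so faithfulness of $\rho$ places $g$ in a Borel $B \subset G$ adapted to $\mu$.

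For the containment of the image in $L := G_a/G_{a,0+}$, I would check that the projection of $B(K) \cap G_a$ to $L$ is generated by the image of $T(\mathcal{O}_0) = T(k)$ together with the images of the root groups of $B$. Each root-group image is either a root group of $L$ (for those $\alpha$ whose value $\alpha(a)$ is an integer, so that the corresponding affine root vanishes at $a$) or is trivial (otherwise), and the collection of surviving roots forms a positive system for a Borel $B_L \subset L$.

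For injectivity, suppose $g \in C \cap G(K)$ maps to $1 \in L$, so that $g \in G_{a,0+}$ at $v = 0$. Factor $g = \theta\cdot u$ with $\theta \in T(K)$ and $u \in U(K)$. The diagonal constraints from the lemma, combined with the global product formula, force $\theta \in T(k)$. Each root coordinate $y_\alpha \in K$ of $u$ is regular away from $\{0,\infty\}$, and hence a Laurent polynomial in the coordinate $t$ on $\PP^1$. The refined Moy-Prasad bound at $v=0$ (strict by the $G_{a,0+}$ hypothesis) and the tight Iwahori bound at $v=\infty$ give integer estimates on $v_0(y_\alpha)$ and $v_\infty(y_\alpha)$ respectively. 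Using $\alpha(b) = -\alpha(a) + \epsilon\mu\cdot\alpha$ and choosing $\epsilon$ small enough that $\epsilon\mu\cdot\alpha$ does not push $\alpha(a)$ across an integer, a short case analysis on the fractional part of $\alpha(a)$ shows that these vanishing orders always sum to at least $1$, leaving no room for a nonzero admissible Laurent polynomial. Thus $u = 1$ and $g = \theta \in T(k)$; triviality of its image in $L$ then forces $g = 1$.

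The main obstacle is precisely the integer-valued estimate $v_0(y_\alpha) + v_\infty(y_\alpha) \geq 1$: it relies delicately on the strictness of the Moy-Prasad bound at $v = 0$ and on the smallness of the regular positive perturbation $a+b = \epsilon\mu$, chosen so that the perturbation does not cross integer thresholds determined by $\alpha(a)$ for any root $\alpha$ of $B$.
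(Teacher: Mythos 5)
Your opening move -- applying Lemma~\ref{parahoric-coordinates-lemma} at every place, summing against the product formula, and reading off from $(a+b)\cdot(\lambda_i-\lambda_j) \leq 0$ (with $a+b$ a small regular perturbation) that the image lands in a dynamical Borel of $G_a/G_{a,0+}$ -- is exactly the paper's strategy, so the containment half is essentially on track, even if the paper argues more directly about the image of $g$ in the reductive quotient (via $\lambda(t)g\lambda(t)^{-1}$ remaining bounded) rather than routing through a global Borel $B(K)$ and then analyzing the image of $B(K)\cap G_a$.

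The real divergence -- and the real gap -- is in the injectivity half. You decompose $g = \theta u$ with $\theta$ in the torus and $u$ unipotent, isolate root coordinates $y_\alpha$ of $u$, argue they are Laurent polynomials, and try to deduce $v_0(y_\alpha) + v_\infty(y_\alpha) \geq 1$ by ``a short case analysis on the fractional part of $\alpha(a)$''. You flag this yourself as the main obstacle, and it is: the case analysis is not carried out, and it also quietly assumes the root coordinates live in $K$ itself, whereas the root groups of an unramified quasi-split $G$ may be Weil restrictions $\mathrm{Res}_{K_\alpha/K}\,\mathbb{G}_a$, so $y_\alpha$ lives in an unramified extension and the Laurent-polynomial bookkeeping would need care. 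The paper avoids all of this: it never decomposes $g$. Since $g$ has trivial image in $G_a/G_{a,0+}$, we have $g \in G_{a,0+}$, and the product formula forces \emph{equality} in every local bound, i.e.\ $v_0(\rho_{ij}(g)) = a\cdot(\lambda_i-\lambda_j)$ whenever $\rho_{ij}(g) \neq 0$. But the refined Moy--Prasad inequality \eqref{parahoric-entry-inequality-2} (with $r=0$) gives the \emph{strict} bound $v_0(\rho_{ij}(g) - \delta_{ij}) > a\cdot(\lambda_i-\lambda_j)$. For $i\neq j$ this is an immediate contradiction, so all off-diagonal entries of $\rho(g)$ vanish; the diagonal entries lie in $\mathcal{O}_v$ for every $v$, hence in $k$, and are forced to equal $1$ by the same strict bound. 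Faithfulness of $\rho$ gives $g=1$. The point is that the same sharpness-plus-strictness argument that you are trying to reproduce for each $y_\alpha$ separately already works wholesale for the matrix entries $\rho_{ij}(g)$, with no decomposition and no case analysis on fractional parts. I would redo the injectivity half along these lines rather than attempt to patch the Laurent-polynomial argument.
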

 
 \begin{proof}
   Let $\lambda \colon \mathbb  G_m \to S$ be a cocharacter which is a negative multiple of $a+b$.   If we consider the image of $\lambda$ in $G_{a}/G_{a,0+}$ continues to  define a cocharacter of the maximal split torus of $G_{a}/G_{a, 0+}$. Since $\lambda$  lies in the interior of a Weyl chamber, it defines a Borel subgroup $B_{\lambda}$ over $k$ (under the ``dynamical" definition of parabolic subgroups). We shall show that the image of $C \cap G(K)$ in $G_a/G_{a,0+}$ belongs to $B_{\lambda}$. For this, we need to show that for any $g \in C \cap G(K)$,
   \[ \text{$\lambda(t) \cdot g \cdot \lambda(t)^{-1}$ remains bounded as $t \to 0$.} \]
     \vskip 5pt
     
      To verify this, we  shall apply Lemma \ref{parahoric-coordinates-lemma} to the  faithful algebraic representation $\rho\colon G \to \operatorname{Aut}(V)$ of $G$ over $k$  that was fixed in \S \ref{SS:unramified}. Recall that $V$ is equipped  with a $k$-basis
  $e_1,\dots, e_n$ of $V$, where $e_i$ is an eigenvector of $S$ with eigencharacter $\lambda_i$.
  For any $g \in C \cap G(K)$ and $(i,j)$ such that $\rho_{ij}(g) \ne 0$, we deduce 
 by the product formula and Lemma \ref{parahoric-coordinates-lemma}  that     
  \begin{equation}\label{second-pf-equation} 
  0 = \sum_v v (\rho_{ij}(g)) \geq   \sum_v  \lceil f(v) \cdot (\lambda_i -\lambda_j) \rceil  =  \lceil a \cdot (\lambda_i -\lambda_j) \rceil + \lceil b \cdot (\lambda_i -\lambda_j ) \rceil .\end{equation}
  Now  since $b$ is a small perturbation of $-a$, \eqref{second-pf-equation} can only be satisfied if $ a \cdot (\lambda_i - \lambda_j) $ is an integer and $b \cdot (\lambda_i -\lambda_j) \leq -a \cdot (\lambda_i - \lambda_j)$.  
  \vskip 5pt
  
  With $\lambda$ chosen as in the beginning of the proof, we thus deduce that
  for $g\in C \cap G(K)$ such that  $\rho_{ij}(g) \neq 0$,   we have $(a+b) \cdot  (\lambda_i - \lambda_j ) \leq 0$  and thus 
  \[  \lambda\cdot (\lambda_i -\lambda_j) \geq 0. \] 
  It follows that  $\lambda(t) g \lambda (t)^{-1}$ is bounded as $t \to 0$, since all the nonvanishing coordinates  $\rho_{ij}$ of $g$ are eigenvectors under the conjugation action by $\lambda(t)$ with eigenvalue a negative power of $t$. This implies that the image of $g$ in 
 $G_{a}/G_{a,0+}$ lies in the Borel subgroup $B_{\lambda}$, as desired.

  \vskip 5pt
  
  It remains to show that if $g \in C \cap G(K)$ has trivial image in $G_a/ G_{a, 0+}$, then $g$ is the identity element. 
  If $\rho_{ij}(g) \neq 0$, then because $ a \cdot (\lambda_i - \lambda_j) $ is an integer and $b \cdot (\lambda_i -\lambda_j) \leq -a \cdot (\lambda_i - \lambda_j)$, the right side of \eqref{second-pf-equation} is $0$. So every inequality is sharp, i.e.
  \[  v_0(\rho_{ij}(g))  = a \cdot(\lambda_i - \lambda_j). \]
  On the other hand, by Lemma \ref{parahoric-coordinates-lemma}(ii), if $g \in G_{a, 0+}$, then 
  \[  v_0(\rho_{ij}(g)) - \delta_{ij}) >    a \cdot (\lambda_i -\lambda_j). \]  
  Thus, if $i \neq j$, we obtain a contradiction from the assumption $\rho_{ij}(g) \neq 0$. Hence, $\rho_{ij}(g) =0$ for $i \ne j$. On the other hand, if $i=j$, then $\rho_{ij}(g) \in \mathcal{O}_v$ for every place of $K$, and thus lies in $k$, but since $v_0(\rho_{ii}(g) -1)  > 0$, we see that $\rho_{ii}(g) =1$.
   Thus, if $g \in G_{0,x+}$, then $\rho(g)$ is the identity, so that $g$ is the identity since $\rho$ is faithful.

 \end{proof}

 \subsection{ Poincar\'e series II}
 Using the above lemma, we will construct two different Poincar\'e series for $G$ which are unramified outside $\{0,\infty\}$ and 
 whose local representation at $0$ is $\pi$, but with different local representations at $\infty$.  
 
 \vskip 5pt
 
 Let $B_{\lambda}$ be a Borel subgroup of $G_a/G_{a,0+}$  containing the image of $C$. Let $H$ be the image of $J$ in $G_a/G_{a,0+}$. Since we are assuming that $J$ is sufficiently small,  there exists $g$ in $G_a/G_{a,0+}$ with $J \cap g B_{\lambda}  g^{-1}$ trivial.
  By conjugating $J$ by a lift of $g$, we may assume $g=1$. Set
 \[  C' = J \times \prod_{v \ne 0} C_v  \subset C \subset  G(\mathbb{A}_K). \]
 Then the previous lemma implies that natural map $C' \cap G(K) \to G_a / G_{a,0+}$ is injective with image contained in $H \cap B =\{1\}$. Hence, $C' \cap G(K)$ is trivial.
\vskip 5pt

Recall that $G_y \subset G(K_{\infty})$ is an Iwahori subgroup, so that  the quotient $G_{y} / G_{y,0+}$ is a torus  of rank $r \geq \dim S$, hence has at least $(q-1)^r$ points. It thus has a non-trivial character $\chi$ because $q > 2$.  We can view $\sigma$ and $\sigma \otimes \chi$ as representations of $C'$ by projection onto the components at $0$ and $\infty$.
 \vskip 5pt
 Now apply the Poincar\'e series construction to obtain:
 \vskip 5pt
 
 \begin{itemize}
 \item an automorphic  function $f_1$  created from  a matrix coefficient of $\sigma$, 
 \item an automorphic function  $f_2$   created from a matrix coefficient of  $\sigma \otimes \chi$.
 \end{itemize}
 These Poincar\'e series are nonzero because $C' \cap G(K)$ is trivial, so that the sum defining the series has only one nonzero term.
 From the spectral expansion of $f_1$ and $f_2$, we obtain two cuspidal automorphic representations $\Pi_1$ and $\Pi_2$ which are unramified outside $\{0,\infty\}$ and such that
 \begin{itemize}
 \item at $0$, the local components of $\Pi_1$ and $\Pi_2$ are both isomorphic to $\pi$;
 \item at $\infty$, $\Pi_{1,\infty}$ has nonzero Iwahori-fixed vectors, whereas $\Pi_{2,\infty}$ doesn't. Indeed, both these representations are subquotients of principal series representations induced from characters of $T(K)$ whose restrictions to $T(k)$ are $1$ and $\chi$ respectively. 
 \end{itemize}

 \vskip 5pt
 
 \subsection{Proof of Theorem \ref{wild}}
 We can now conclude the proof of Theorem \ref{wild}.
  Consider the global parameters $\mathcal{L}(\Pi_1)$ and $\mathcal{L}(\Pi_2)$ associated to $\Pi_1$ and $\Pi_2$ above. These are both unramified outside $\{0, \infty\}$. Moreover, 
 \[ \mathcal{L}(\Pi_1)^{ss}_0  \cong \mathcal{L}(\Pi_2)^{ss}_0, \quad \text{but} \quad \mathcal{L}(\Pi_1)^{ss}_{\infty}  \ncong \mathcal{L}(\Pi_2)^{ss}_{\infty}. \]
Indeed, at $\infty$, the associated local Genestier-Lafforgue parameters are tame, and their restrictions to the tame inertia group at $\infty$ correspond under class field theory to $1$ and $\chi \ne 1$ (up to semisimplification).   
 
 \vskip 5pt

  Now suppose, for the sake of contradiction, that  the local Genestier-Lafforgue parameter of $\pi$ is tame.   Thus $\mathcal L(\Pi_1)$ and $\mathcal L(\Pi_2)$ factor through the tame fundamental group of $\mathbb G_{m, k}$. In the tame fundamental group of $\mathbb G_{m, k}$, the inertia subgroups at $0$ and $\infty$ are the same subgroup, both being equal to the tame geometric fundamental group. Since $\mathcal L(\Pi_1)$ and $\mathcal L(\Pi_2)$ are isomorphic when restricted to the tame inertia group at $0$ and semisimplified, they must  also be isomorphic when restricted to the tame inertia group at $\infty$ and semisimplified.  This contradicts what we showed above:  that $\mathcal L(\Pi_1)$ is trivial while $\mathcal L(\Pi_2)$ is nontrivial when restricted to the tame inertia group at $\infty$ and semisimplified.
 This gives the desired contradiction and completes the proof of Theorem \ref{wild}.
 \vskip 10pt

  \subsection{A special case}
 When the group $J$ is contained in the principal congruence subgroup of a hyperspecial maximal compact, there is a shorter argument, though it requires the purity of $\CL^{ss}(\pi)$ as a hypothesis and appeals to Theorem \ref{notunramified}.
\vskip 5pt
\begin{prop}\label{Gammap} Under the hypotheses of Proposition \ref{wild}, suppose $G_a$ is a hyperspecial maximal compact and
$J$ is contained its principal congruence subgroup $G_{a,0+}$.  Let $\pi$ be a pure supercuspidal representation of $G(K)$
compactly induced from an irreducible representation $\sigma$ of $J$.  Then $\mathcal{L}^{ss}(\pi)$ is wildly ramified.
\end{prop}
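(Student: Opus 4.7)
The plan is to argue by contradiction: suppose $\CL^{ss}(\pi)$ is at most tamely ramified, and combine this with the purity hypothesis together with Theorem \ref{notunramified} to derive a contradiction. First I would globalize $\pi$ via the Poincar\'e series construction of Theorem \ref{poin}: the presentation $\pi \cong \operatorname{c-Ind}_J^{G(F)}\sigma$ supplies the required $\mathfrak{s}$-type and produces a cuspidal automorphic representation $\Pi$ on $G(\ad_K)$, with $K = k(\PP^1)$, satisfying the conditions (a)--(d) of Theorem \ref{notunramified}; in particular $\Pi_z \cong \pi$ for every $z \in \Gm(k)$, and the global parameter $\CL(\Pi)$ is everywhere at worst tamely ramified under the contradiction hypothesis.

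Next I would invoke the stronger form of Deligne's Weil II result recorded in the proof of Theorem \ref{pure}: purity of $\CL^{ss}(\pi)$ forces the inertia image $\CL^{ss}(\pi)(I_F) \subset {}^LG(\Qlb)$ to be finite. The tame hypothesis then forces this finite image to have order prime to $p$. Thus there is a tame cyclic totally ramified extension $F'/F$ of some degree $n$ coprime to $p$ killing the inertia image, so that $\CL^{ss}(\pi)\vert_{W_{F'}}$ is pure and unramified.

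The third step exploits the hypothesis $J \subset G_{a,0+}$ to lift the picture to a tame cyclic cover $Y \to \PP^1$ of degree $n$, ramified precisely at the points of $\PP^1(k)$ over which $\CL(\Pi)$ is tamely but non-trivially ramified. Because $G_{a,0+}$ is pro-$p$ and $n$ is coprime to $p$, the type $(J,\sigma)$ is insensitive to this prime-to-$p$ base change, and the Poincar\'e series construction of \S\ref{poincaresplit} can be replayed verbatim on $Y$ to produce a cuspidal automorphic representation $\Pi'$ of $G(\ad_{k(Y)})$ whose local component at each preimage of $z \in \Gm(k)$ has Genestier--Lafforgue parameter equal to the (now unramified) restriction $\CL^{ss}(\pi)\vert_{W_{F'}}$. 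Since $G$ is not a torus and this parameter is pure and unramified, Theorem \ref{notunramified} applied to $\Pi'$ over $Y$ yields the desired contradiction.

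The main obstacle is making the tame-base-change step rigorous: one must produce $\Pi'$ on the cover $Y$ with the correct local components, and verify the required compatibility at every place so that Theorem \ref{notunramified} applies. The hypothesis $J \subset G_{a,0+}$ is precisely what makes this feasible, since a pro-$p$ datum $(J,\sigma)$ transports cleanly under a prime-to-$p$ base change, whereas the general case of Theorem \ref{wild} must instead proceed through the two-Poincar\'e-series comparison that occupies \S\ref{wildram}. This is why the argument here is both shorter and limited to the special geometry $J \subset G_{a,0+}$.
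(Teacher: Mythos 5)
Your approach diverges substantially from the paper's, and it has a genuine gap that makes it fail. Let me first describe what the paper does, then explain where your argument breaks down.

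The paper's proof is very short and uses the hypothesis $J \subset G_{a,0+}$ in a completely different way than you suggest. Because $J$ lies in the principal congruence subgroup, the intersection
$J \times \prod_{v\neq\infty}G(\CO_v) \cap G(K)$
is trivial (any global element would lie in $G(k)$ and reduce to the identity in $G_a/G_{a,0+}$). This allows one to form a Poincar\'e series with a matrix coefficient of $\sigma$ at $\infty$ \emph{only}, and the characteristic function of $G(\CO_v)$ at every other place $v$ --- including at $0$ and at all of $\Gm(k)$. The resulting cuspidal $\Pi$ is unramified away from $\infty$, so $\CL(\Pi)$ is a local system on $\mathbb{A}^1$. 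Since the tame fundamental group of $\mathbb{A}^1$ is trivial, a tamely ramified local system on $\mathbb{A}^1$ is unramified; but Theorem \ref{notunramified} (applied via the separate globalization of Theorem \ref{poin}) shows $\CL^{ss}(\pi)$ \emph{is} ramified, since it is pure. Hence the ramification at $\infty$ must be wild. The role of $J \subset G_{a,0+}$ is thus to make this second, sparser Poincar\'e series nonvanishing --- not, as you suggest, to make ``the type insensitive to prime-to-$p$ base change.''

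Your proposed route through a tame cyclic cover $Y \to \PP^1$ has two serious problems. First, there is no established theory of local base change for general reductive groups over function fields (the paper discusses this explicitly in \S\ref{basec} and treats it as an open problem), so the passage from $\pi$ over $F$ to a representation over $F'$ with controlled Genestier--Lafforgue parameter is not available. Second, and more fundamentally, the argument is circular: to apply Theorem \ref{notunramified} over $Y$ you would need the base-changed representation to be \emph{supercuspidal} with pure unramified parameter, but the content of Theorem \ref{notunramified} (and Corollary \ref{sph}) is precisely that a pure unramified parameter forces the representation to be a constituent of an unramified principal series, hence \emph{not} supercuspidal. So you would not be able to verify the hypotheses of Theorem \ref{notunramified} for $\Pi'$. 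Finally, the entire Poincar\'e series machinery of \S\ref{S:open}--\S\ref{S:mainthm} is tailored to $\PP^1$ (it relies on the identity $a+b+(q-1)c=0$ in the apartment and the $q-1$ rational points of $\Gm$), so ``replaying it verbatim'' over an arbitrary cyclic cover $Y$ is not a minor modification. The first two steps of your proposal (globalize via Theorem \ref{poin}; deduce finiteness of the inertia image from purity and Deligne) are sound, but the third step does not close the argument, and the much simpler observation about $\mathbb{A}^1$ is what you are missing.
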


\begin{proof}  We construct a Poincar\'e series as in Theorem \ref{poin} using a function $\varphi$ which at $\infty$ is a matrix coefficient
of $\sigma$ and is a characteristic function of the standard  hyperspecial maximal compact $U_v = G(\mathcal{O}_v)$ at all other places $v$.  Now the support of $\varphi_\infty$ is contained in $J$, and $J \times\otimes_{v \neq \infty}U_v \cap G(K) = \{1\}$.  Thus the Poincar\'e series  $P_\varphi$ does not vanish at $1$, and the space of cusp forms it generates contains a cuspidal automorphic representation $\Pi$ that is unramified except at $\infty$.  So $\CL(\Pi)$ is a parameter with values in ${^L}G$ that is unramified away from $\infty$.  On the other hand, by Theorem \ref{notunramified} $\CL^{ss}(\pi)$ is ramified.  Since a tamely ramified local system on $\mathbb{A}^1$ is unramified, it follows that the ramification at $\infty$ must be wild. 
\end{proof}

 \vskip 10pt

\section{Questions}  \label{S:questions}

In this section, we  raise a number of natural  questions which are suggested by our results.
\vskip 5pt

\subsection{ Positive depth representations}
In the context of Theorem \ref{wild}, for a given reductive group $G$ over a local field $F$, one may ask for a better understanding (or even a classification) of all  supercuspidal representations satisfying the condition of Theorem \ref{wild}, i.e.that  can be induced from a sufficiently small open compact (modulo center) subgroup $J$. In particular, one may ask if they can be understood in the framework of J.K. Yu's construction of supercuspidal representations. For example, one might hope that if the smallest twisted Levi subgroup in the twisted Levi sequence in a Yu datum is a torus, then the resulting supercuspidal representation should satisfy  the condition of Theorem \ref{wild} or a slightly modified version of it.
\vskip 5pt


\subsection{ Examples of pure supercuspidals}
 Theorem \ref{mainthm}(i) has the serious condition that $\pi$ is a pure supercuspidal representation.
  In order for the theorem to be non-vacuous, it will be good to have some examples of pure supercuspidal representations.
  For example, the desiderata of the LLC suggests that if $\pi$ is a generic supercuspidal representation of a quasi-split $G$, then $\pi$ is pure. 
In the rest of this section, we  show this for certain generic supercuspidal representations of depth 0. 
\vskip 5pt

\subsection{Poincar\'e series III}
We begin by giving yet another construction of Poincar\'e series. Let us work in the context of \S \ref{SS:unramified} again, so that $G$ is an unramified semisimple group over $k$.
\vskip 5pt

 \vskip 5pt

\begin{prop}\label{generic0}
Let $\pi$ be a depth 0 generic supercuspidal representation of $G(F)$ (with $F = k((t))$) of the form
 \[  \pi \isoarrow c-Ind_{G(\mathcal{O}_F)}^{G(F)} \sigma, \]
where $\sigma$ is a generic cuspidal representation of the finite reductive group $G(k)$.   
There is a globally generic representation $\Pi$ of $G$ such that 
\vskip 5pt

\begin{itemize}
\item $\Pi_0 \cong \pi$ and $ \Pi_{\infty} \cong \pi^{\vee}$;
\item for all other places $v$ of $K = k(t)$, $\Pi_v$ is $G(\mathcal{O}_v)$-unramified.
\end{itemize}
\end{prop}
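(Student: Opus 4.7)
The plan is to build a globally $\psi$-generic cuspidal automorphic form via a Whittaker--Poincar\'e series and extract $\Pi$ from its spectral decomposition. Fix a Borel subgroup $B = TN$ of $G$ defined over $k$ and a nondegenerate character $\psi \colon N(K)\backslash N(\mathbb{A}_K) \to \Qlb^\times$, unramified outside $\{0,\infty\}$ and compatible at $0$ and $\infty$ with the Whittaker data making $\sigma$ and $\sigma^\vee$ generic. Since $\sigma$ is a generic cuspidal of $G(k)$, the depth-zero supercuspidal $\pi$ is $\psi_0$-generic on $G(K_0) = G(F)$, and dually $\pi^\vee$ is $\psi_\infty^{-1}$-generic on $G(K_\infty)$.

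Choose a decomposable Whittaker function $W = \prod_v W_v$ on $G(\mathbb{A}_K)$ with $W_0$ a nonzero vector in the local Whittaker model $\mathcal{W}(\pi,\psi_0)$ (supported modulo $N(K_0)$ on $G(\mathcal{O}_0)$ using the compactly induced description of $\pi$), $W_\infty$ the analogous vector for $\pi^\vee$, and $W_v$ the normalized spherical Whittaker function at each place $v \notin \{0,\infty\}$. Form the Poincar\'e series
\[
P_W(g) = \sum_{\gamma \in N(K) \backslash G(K)} W(\gamma g).
\]
Absolute convergence reduces, via the compact supports of $W_0$ and $W_\infty$ modulo $N$ together with integrality at all other places, to a finite sum: the intersection $G(K) \cap \prod_v G(\mathcal{O}_v)$ equals $G(k)$ by the projectivity of $\mathbb{P}^1$, as in Remark \ref{hyper}, so only finitely many $N(K)$-cosets of $G(K)$ contribute. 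By the standard unfolding identity, the $\psi$-Fourier coefficient of $P_W$ equals $W$ itself, so $P_W$ is nonzero and globally $\psi$-generic by construction.

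Cuspidality of $P_W$ follows from the vanishing of the Jacquet modules of $\pi$ and $\pi^\vee$ along every proper parabolic: the constant term of $P_W$ along any parabolic $P \subsetneq G$ factors through the Jacquet module of $\pi$ (or $\pi^\vee$) and hence vanishes by supercuspidality. The spectral decomposition in the cuspidal spectrum then produces a cuspidal automorphic $\Pi$ admitting a nonzero $\psi$-Whittaker functional, hence globally generic. By the universal property of compact induction and irreducibility of $\pi$, the nonzero map $\pi \hookrightarrow \Pi_0$ induced by the local Whittaker vector $W_0$ is an isomorphism; likewise $\Pi_\infty \cong \pi^\vee$. At every $v \notin \{0,\infty\}$, $\Pi_v$ has a nonzero spherical Whittaker vector and is therefore $G(\mathcal{O}_v)$-unramified.

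The main obstacle is to establish convergence and cuspidality of the Whittaker--Poincar\'e series uniformly for a general unramified $G$ — routine for $GL_n$ via Jacquet--Shalika, but requiring some care in general. An alternative closer in spirit to Sections \ref{S:open}--\ref{S:mainthm} is to start from a matrix-coefficient Poincar\'e series $P_\varphi$ with $\varphi_0$ (resp.\ $\varphi_\infty$) a matrix coefficient of $\sigma$ (resp.\ $\sigma^\vee$) and $\varphi_v = \mathbf{1}_{G(\mathcal{O}_v)}$ elsewhere; use the identity $G(K) \cap \prod_v G(\mathcal{O}_v) = G(k)$ to compute $P_\varphi(1) = \sum_{\gamma \in G(k)} \varphi_0(\gamma)\varphi_\infty(\gamma)$ and invoke Schur orthogonality on $G(k)$ to make this nonzero, then integrate against $\psi$ to extract a globally generic cuspidal component.
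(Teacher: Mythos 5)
Your primary route is genuinely different from the paper's, and it has a gap at the convergence step. The paper's Poincar\'e series is $P_f(g) = \sum_{\gamma\in G(K)} f(\gamma g)$ with $f=\prod_v f_v$ supported on $\prod_v G(\mathcal O_v)$, so the sum is tautologically finite because $G(K)\cap\prod_v G(\mathcal O_v) = G(k)$. Your $P_W$ instead sums over $N(K)\backslash G(K)$ a function whose factors $W_v$ at unramified places are \emph{spherical Whittaker functions}; these are supported modulo $N(K_v)$ on $N(K_v)A^-(K_v)G(\mathcal O_v)$, which is not compact. The identity $G(K)\cap\prod_v G(\mathcal O_v) = G(k)$ controls a $G(K)$-sum of a compactly supported function, not an $N(K)$-coset sum of a function with noncompact support modulo $N$, so it does not by itself show that only finitely many cosets contribute. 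One can probably repair this (the torus components are trivial at $0,\infty$ and anti-dominant elsewhere, hence zero everywhere for semisimple $G$ by the product formula and salience of the anti-dominant cone, and then one uses discreteness of $N(K)\backslash G(K)$ in $N(\mathbb A_K)\backslash G(\mathbb A_K)$), but you flag the convergence as ``requiring care,'' and it is precisely this difficulty that the paper's compact-support construction is built to sidestep. The unfolding identity and the cuspidality of $P_W$ via vanishing of constant terms are similarly true in spirit but would need to be established; they are not free for a general unramified $G$.

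Your alternative sketch is essentially the paper's route, but it omits the decisive choice. The paper does not take an arbitrary matrix coefficient of $\sigma$: it fixes a normalized $(N(k),\psi)$-Whittaker vector $w$ of $\sigma$ and sets $f_0(g)=\langle\sigma(g)w,w\rangle$ on $G(\mathcal O_0)$ and $f_\infty(g)=\langle w,\sigma(g)w\rangle$ on $G(\mathcal O_\infty)$. The relation $\sigma(n)w=\psi(n)w$ is what collapses the Whittaker--Fourier coefficient
\[
\int_{N(K)\backslash N(\mathbb A_K)} \overline{\Psi(n)}\, P_f(n)\,dn \;=\; \mathrm{Vol}\bigl(N(k)\backslash N(\widehat{\mathcal O})\bigr)\cdot \sum_{\gamma\in G(k)} \bigl|\langle\sigma(\gamma)w,w\rangle\bigr|^2 \;\neq\; 0,
\]
from which the extracted cuspidal $\Pi$ is globally generic. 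With a generic matrix coefficient $\langle\sigma(g)v,v\rangle$ not built from the Whittaker vector, Schur orthogonality still gives $P_\varphi(1)\neq 0$, but the $\Psi$-Fourier coefficient of $P_\varphi$ may well vanish; ``integrate against $\psi$'' does not automatically yield a globally generic constituent. That is the precise point your alternative leaves open, and it is the heart of the paper's proof.
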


\begin{proof}
 Since $\sigma$ is a generic representation of $G(k)$, one can find a generic character 
\[  \psi: N(k) \longrightarrow \mathbb{C}^{\times} \]
such that the $(N(k), \psi)$-eigenspace of $\sigma$ is nonzero, in which case it is 1-dimensional. Let us fix a nonzero vector $w$ such that
\begin{equation} \label{E:Whit}
  \sigma(n) \cdot w = \psi(n) \cdot w  \quad \text{for all $n \in N(k)$.}\end{equation}
Fixing a $G(k)$-invariant inner product $\langle-, -\rangle$ on $\sigma$, we may assume that $\langle w,w \rangle =1$. 

\vskip 5pt

Now we may consider the function on $G(K_0)$ defined by
\[  f_0(g) =\begin{cases}
 \langle \sigma(g )\cdot w , w  \rangle , \quad \text{ if $g \in G(\mathcal{O}_0)$;} \\
 0, \quad \text{ if $g \notin G(\mathcal{O}_0)$.} \end{cases} \]
 This is a matrix coefficient of $\pi$, which is built out of a matrix coefficient of $\sigma$.
of $\sigma$ supported on $G(\mathcal{O}_0)$.
Likewise, we define a function on $G(K_{\infty})$ by:
\[  f_{\infty}(g) = \begin{cases}
\langle w, \sigma(g)\cdot  w \rangle, \quad \text{  if $g \in G(\mathcal{O}_{\infty})$;}  \\
0, \quad \text{  if $g \notin G(\mathcal{O}_{\infty})$,} \end{cases} \]
which is a matrix coefficient of $\overline{\pi} \cong \pi^{\vee}$.
Define a locally constant compactly supported function $f = \prod_v f_v$ on $G(\mathbb{A}_K)$ by requiring that $f_0$ and $f_{\infty}$ are as defined above and $f_v$ is the characteristic function of $G(\mathcal{O}_v)$ for all other $v$. Then consider the Poincar\'e series
\[  P_f(g) = \sum_{\gamma \in G(K)}  f(\gamma g) \]
Because of the support conditions on $f$, one has
\[  P_f(1) = \sum_{\gamma \in G(k)}  f_0(\gamma) \cdot f_{\infty}(\gamma) = \sum_{\gamma \in G(k)} | \langle \sigma(\gamma) \cdot w, w \rangle|^2. \]
This sum is certainly nonzero, so that $P_f$ is nonzero.
 \vskip 5pt

To see if $P_f$ is globally generic,  let us first construct an appropriate automorphic generic character 
\[  \Psi = \prod_v \Psi_v:  N(K) \backslash N(\mathbb{A}_K) \longrightarrow \mathbb{C}^{\times}. \]
Note that one has
 \[  N(k) \backslash N(\widehat{O}) \isoarrow N(K) \backslash N(\mathbb{A}_K), \]
 where $\widehat{O} = \prod_v \mathcal{O}_v$.
 We define $\Psi$ by requiring that
 \vskip 5pt
 \begin{itemize}
 \item $\Psi_v =1$ on $N(\mathcal{O}_v)$ for all $v \ne 0$ or $\infty$;
 \vskip 5pt
 \item the restriction of $\Psi_0$ to $N(\mathcal{O}_0)$ is obtained as
 \[ \begin{CD}
   \Psi_0 : N(\mathcal{O}_0) @>>> N(k) @>\psi>>  \mathcal{C}^{\times}
\end{CD} \] 
\vskip 5pt
\item likewise, the restriction of $\Psi_{\infty}$ to $N(\mathcal{O}_{\infty})$ is obtained as
\[ \begin{CD}
   \Psi_{\infty} : N(\mathcal{O}_{\infty}) @>>> N(k) @>\psi^{-1}>>  \mathcal{C}^{\times}
\end{CD} \] 
\end{itemize}
 Hence, for $n \in N(\widehat{O})$, one has
\[  \Psi(n) = \psi(n_0) \cdot \overline{\psi(n_{\infty})},\]
so that $\Psi$ is indeed trivial on the diagonally embedded  $N(k) \hookrightarrow N(\widehat{O})$.  
\vskip 5pt

We can now compute the $(N, \Psi)$-Whittaker-Fourier coefficient of $P_f$:
\begin{align}
  &\int_{N(K) \backslash N(\mathbb{A}_K)} \overline{\Psi(n)} \cdot P_f(n) \, dn  \notag \\
  =&\int_{N(k)\backslash N(\widehat{\mathcal{O})}} \overline{\Psi(n)} \cdot \sum_{\gamma \in G(K)} f(\gamma n) \, dn \notag  
   \end{align}
   For $n \in N(\widehat{O})$
  \[  \gamma n \in {\rm supp}(f) \Longrightarrow  \gamma \in G(\widehat{O}) \cap G(K) = G(k). \]
  Moreover, with $n \in N(\widehat{O})$, one deduces by (\ref{E:Whit}) that
  \[  f(\gamma n) =  \langle \sigma(\gamma\cdot   n_0) \cdot w, w \rangle \cdot   \langle w, \sigma(\gamma \cdot n_{\infty}) \cdot w \rangle = \psi(n_0) \cdot \overline{\psi(n_{\infty})} \cdot | \langle \sigma(\gamma) w,w \rangle|^2. \]
  Hence, one has
  \begin{align}
  &\int_{N(K) \backslash N(\mathbb{A}_K)} \overline{\Psi(n)} \cdot P_f(n) \, dn  \notag \\
  =& \int_{N(k)\backslash N(\widehat{\mathcal{O})}}  \overline{\psi(n_0)} \cdot \psi(n_{\infty}) \cdot   \sum_{\gamma \in G(k)} \psi(n_0) \cdot \overline{\psi(n_{\infty})} \cdot | \langle \sigma(\gamma) w,w \rangle|^2 \, dn. \notag \\
  =& {\rm Vol}(N(k) \backslash N(\widehat{O})) \cdot \sum_{\gamma \in G(k)} | \langle \sigma(\gamma) w,w \rangle|^2. \notag 
    \end{align}
 which is nonzero. Thus, $P_f$ is $(N,\Psi)$-generic.
 \end{proof}

\subsection{Purity of generic depth zero representations}
Now we can sketch our strategy for showing that a depth $0$ generic supercuspidal representation is pure.   
We use the following
\begin{ethm}[Dat-Lanard]\label{dlan}  Let $\pi \in \CA(G,F)$ be a depth $0$ supercuspidal representation of $G(F)$ (in the sense of \cite{DR}).  Then
the semisimple parameter $\CL^{ss}(\pi)$ is (at most) tamely ramified.
\end{ethm}

Using this, we can now show:
\vskip 5pt

\begin{thm}
Assume that $G$ is semisimple over $k$ and $p = {\rm char}(k)$ is a good prime for $G$. Then for any depth 0 generic supercuspidal representation of $G(F)$, the associated semisimple parameter $\CL^{ss}(\pi)$ is pure.
\end{thm}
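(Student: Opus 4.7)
The approach is to globalize $\pi$ to a cuspidal automorphic representation on $\mathbb{P}^1$ and then analyze the resulting global parameter using the machinery already developed in the paper.

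First, I would invoke Proposition \ref{generic0} to globalize $\pi$ to a globally generic cuspidal automorphic representation $\Pi$ of $G(\mathbb{A}_K)$, with $K = k(t)$, such that $\Pi_0 \cong \pi$, $\Pi_\infty \cong \pi^{\vee}$, and $\Pi_v$ is $G(\mathcal{O}_v)$-unramified at all other places. Lafforgue's construction then attaches a global parameter $\mathcal{L}(\Pi)$ whose localization at $0$ semisimplifies to $\mathcal{L}^{ss}(\pi)$.

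Second, by Expected Theorem \ref{dlan} (Dat-Lanard), both $\mathcal{L}^{ss}(\pi)$ and $\mathcal{L}^{ss}(\pi^{\vee})$ are tamely ramified. Hence for every representation $V$ of ${}^LG$, the composite $\mathcal{V}(\Pi, V) := V \circ \mathcal{L}(\Pi)$ is a lisse $\ell$-adic local system on $\mathbb{G}_m$ over $k$ that is tamely ramified at $0$ and $\infty$. By the Tannakian characterization used in Corollary \ref{locsys}, to prove purity of $\mathcal{L}^{ss}(\pi)$ it suffices to show that each such $\mathcal{V}(\Pi, V)$ is pure; and because $\hat{G}$ is semisimple, the weight is forced to equal $0$.

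Third, I would attempt to deduce purity of each $\mathcal{V}(\Pi, V)^{ss}$ on $\mathbb{G}_m$ by a variant of the argument in Corollary \ref{abelian}: apply L. Lafforgue's structure theorem (Corollaire VII.8 of \cite{Laf02}) to decompose $\mathcal{V}(\Pi, V)^{ss}$ as a direct sum of pieces of the form (mixed local system) $\otimes$ (rank-one local system pulled back from $\mathrm{Spec}(k)$). The self-duality constraint $\Pi_\infty = \Pi_0^{\vee}$, together with the fact that the global determinant of the parameter is trivial (because $\hat{G}$ is semisimple), should force all Frobenius eigenvalues occurring across these pieces to be Weil $q$-numbers of weight $0$.

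The main obstacle is the final step: rigorously upgrading tameness at $\{0,\infty\}$ and the self-duality constraint to uniform weight-zero purity across all $V$. The cleanest route would be to first prove irreducibility of $\mathcal{L}(\Pi)$ (i.e.\ the image is contained in no proper parabolic of ${}^LG$) and then invoke Lemma \ref{mwf} to get purity of the local Weil--Deligne parameter at $0$; genericity of $\pi$ would then force the monodromy operator $N_\Pi$ to vanish (since for a tamely ramified parameter the nilpotent $N_\Pi$ commutes with a semisimple inertia image of finite order, and temperedness expected from genericity makes $N_\Pi=0$), yielding purity of $\mathcal{L}^{ss}(\pi)$ itself. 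Proving irreducibility of the global parameter is the delicate point: one expects it to follow from the compatibility of $\mathcal{L}$ with parabolic induction (Theorem \ref{parab}(iv), (vi)) combined with supercuspidality of $\Pi_0=\pi$ and global genericity of $\Pi$ (via Shahidi-type non-vanishing of local generic $L$-factors), but turning this expectation into a proof valid uniformly for all unramified $G$ in positive characteristic is the principal difficulty.
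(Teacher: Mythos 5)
Your setup matches the paper exactly: globalize via Proposition \ref{generic0}, invoke Expected Theorem \ref{dlan} to get tame ramification at $0$ and $\infty$, obtain a tame local system $\CV(\Pi,V)$ on $\Gm$, and try to deduce purity of weight $0$ from semisimplicity of $\hG$. But the two ingredients that actually close the argument are missing, and the substitutes you propose do not work.

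First, the paper does not try to prove the local system $\CV(\Pi,Ad)$ itself is pure; instead it argues by contradiction. After passing to a finite constant field extension $k'/k$, the tame semisimple local system $\CL(\Pi,Ad)_{k'}$ on $\GG_{m,k'}$ decomposes (via \cite[Cor.\ VII.8]{Laf02}) as a direct sum of rank-one local systems $L_i$, each pure of some weight $w_i$; let $\Omega$ be the set of weights. Now suppose $\CL^{ss}(\pi)$ is not pure. Since $\pi$ is supercuspidal (hence discrete series), $G$ semisimple, and $p$ good, Theorem \ref{T:Lparameter} produces the pure Weil--Deligne completion $\CL(\pi)$, and Corollary \ref{weight0} forces $Ad\circ\CL(\pi)$ to be pure of weight $0$ as a Weil--Deligne parameter. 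If $\CL^{ss}(\pi)$ were not pure, then $N\neq 0$ in that completion, so by the symmetry condition in Definition \ref{WDpure}(iii) the Frobenius weights of $Ad\circ\CL^{ss}(\pi)$ include two integers that differ by at least $2$. Those weights live in $\Omega$, which is the common set of Frobenius weights of $\Pi_z$ (on the adjoint) at every unramified place $z\in\Gm(k)$.

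Second, and this is the decisive step you omit entirely: the contradiction comes from the fact that each $\Pi_z$ is an unramified \emph{unitary generic} local component of the cuspidal globally generic $\Pi$, and the main theorem of \cite{CH} (Ciubotaru--Harris, a form of the generalized Ramanujan conjecture over function fields) forbids its Satake parameter from having two integral weights differing by $\geq 2$. Your substitute --- using self-duality $\Pi_\infty\cong\Pi_0^\vee$ and triviality of the determinant --- is not strong enough: those constraints only force the multiset of weights to be symmetric about $0$ (e.g.\ $\{-1,0,1\}$ is not excluded). Your alternative route, proving irreducibility of $\CL(\Pi)$ and then appealing to Lemma \ref{mwf} plus genericity to kill $N$, is not what the paper does and, as you yourself flag, you have no proof of irreducibility of the global parameter; the paper sidesteps this entirely by using Corollary \ref{weight0} (which needs only the existence of a pure WD completion, not irreducibility) and \cite{CH}.
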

\vskip 5pt

\begin{proof}
Consider  the global Lafforgue parameter  $\CL(\Pi)$ associated to  the globally generic cuspidal representation $\Pi$ of Proposition \ref{generic0}.  Since $\Pi$ is unramified outside
$0$ and $\infty$, it gives rise to a semisimple local system $\CL(\Pi,Ad)$ on $\Gm/k$, where $Ad$   is the adjoint representation of ${}^LG$.  
Expected Theorem \ref{dlan} implies that $\CL(\Pi,Ad)$ is tamely ramified.  It follows that, for a finite extension $k'/k$ of the constant field, the restriction
$\CL(\Pi,Ad)_{k'}$ of $\CL(\Pi,Ad)^{ss}$ to $\Gm/k'$ is a sum of $1$-dimensional tamely ramified $\ell$-adic local systems:
$$\CL(\Pi,Ad)_{k'} \isoarrow \oplus L_i$$
where each $L_i$ is pure of weight $w_i$.  Let $\Omega$ be the set of weights $w_i$ that occur.
\vskip 5pt

Now suppose $\CL^{ss}(\pi)$ is not pure.  Since $\pi$ belongs to the discrete series, $G$ is semisimple and $p$ is good for $G$, it follows from Theorem  \ref{T:Lparameter} and Corollary \ref{weight0} that the weights that occur in $\CL(\pi)^{ss}$ are integral and two of them differ by at least $2$.  
\vskip 5pt

Thus there are two weights $w_i$ and $w_j$ in $\Omega$ with $|w_i - w_j| \geq 2$.  This would imply that for $z \in \Gm(k)$, the Satake parameter of the unramified representation $\Pi_z$  also has at least two weights that differ by at least $2$.  But $\Pi_z$ is unitary and generic,  because $\Pi$ is cuspidal and globally generic.  The existence of two distinct integral weights is then ruled out by the main theorem of \cite{CH}.
\end{proof}

\vskip 10pt

\section{Base change and incorrigible representations}\label{BCIR}

As we mentioned in the introduction, though our ramification result in Theorem \ref{mainthm} may seem rather weak, it could in fact serve as a starting point, in conjunction with the global input of automorphic base change,  in the (long) journey towards establishing the local Langlands correspondence for a general reductive group $G$ over local function fields. In this section, we would like to elaborate on this. 
\vskip 5pt

\subsection{Tempered base change}
Let $E = k((t))$ for a finite field $k$ and let $G$ be a connected reductive group over $E$. For any finite separable field extension $F$ of $E$,
let $\CT(G,F) \subset \CA(G,F)$ denote the set of irreducible tempered (admissible) representations of $G(F)$ with coefficients in $C$, and let
$P(\CT(G,F))$ be the set of non-empty subsets of  $\CT(G,F)$.  We write $\CL^{ss}_F$ for the semisimple Langlands parametrization
of \eqref{localparam} for $\CA(G,F)$, with a subscript to indicate the base field.  

\vskip 5pt

\begin{defn} Say $G$ admits
tempered base change if, for every pair $F \subset F'$ of finite extensions of $E$, with $F'/F$ a cyclic extension of prime order, there is a map 
$$BC_{F'/F}:  \CT(G,F) \ra P(\CT(G,F'))$$
so that for any $\pi \in \CT(G,F)$ and any $\pi' \in BC_{F'/F}(\pi)$,
\begin{equation}\label{BC}
\CL^{ss}_{F'}(\pi') = \CL^{ss}_F(\pi) ~|_{W_{F'}}.
\end{equation}
\end{defn}
Of course it is expected that every $G$ admits tempered base change and that the set $BC(\pi)$ is characterized intrinsically inside $\CA(G,F)$ in
terms of $\pi$ by relations of distribution characters. It is also assumed that $BC(\pi)$ is always a finite set, but we do not make that
assumption.  There is a brief discussion of the existence of tempered base change at the end of this section.

\begin{definition}\label{incorr}   Suppose $G$ admits tempered base change.  The supercuspidal representation $\pi$ of $G(F)$ is {\bf incorrigible}
if for any sequence 
\begin{equation}\label{cyclics}
F = F_0 \subset F_1 \subset \dots \subset F_r
\end{equation}
 of extensions, where $F_i/F_{i-1}$ is cyclic of prime order for all $i \geq 1$,
the set 
$$BC_{F_r/F}(\pi) := BC_{F_r/F_{r-1}}(BC_{F_{r-1}/F_{r-2}} \dots  (BC_{F_1/F}(\pi)\dots ))$$
contains a supercuspidal member.
\end{definition}

\begin{proposition}\label{BCprop}  Assume $G$ admits tempered base change and $p \nmid |W|$.  Then for any extension $F/E$,
no pure supercuspidal representation of $G(F)$ is incorrigible.  More generally, suppose $G$ has the property that, for every Levi subgroup $L(F)$ of $G(F)$,
 every supercuspidal representation of $L(F)$
contains an $\mathfrak{s}$-type.  Then no pure supercuspidal representation of $G(F)$ is incorrigible.
\end{proposition}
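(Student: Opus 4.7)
The plan is to use Corollary \ref{maincor} as an obstruction: the ramification of the Genestier-Lafforgue parameter of a pure supercuspidal representation of an unramified group precludes that representation from arising as a member of an iterated base change whose parameter has been made unramified. Given a pure supercuspidal $\pi$ of $G(F)$, I will construct a specific tower of cyclic prime-order extensions $F = F_0 \subset F_1 \subset \dots \subset F_r = K$ such that $\CL^{ss}(\pi)|_{W_K}$ is unramified; any supercuspidal member of $BC_{F_r/F}(\pi)$ would then violate the ramification forced by purity.

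The first step is to show that $\CL^{ss}(\pi)|_{I_F}$ has finite image in $\hat{G}(\Qlb)$. This is a standard fact for Frobenius-semisimple $\ell$-adic Weil parameters with $\ell \neq p$: composing with a faithful representation of ${}^LG$, wild inertia $P_F$ has finite image because it is pro-$p$ while $GL_N(\Qlb)$ has no infinite pro-$p$ subgroups; and for tame inertia, Clifford theory (applied to $I_F \trianglelefteq W_F$) shows that the restriction to $I_F$ of a semisimple $W_F$-module is semisimple, which combined with Grothendieck's quasi-unipotence forces the unipotent action on an open subgroup of $I_F$ to be trivial. I can therefore choose a finite Galois extension $K/F$ with $\CL^{ss}(\pi)|_{W_K}$ unramified; after enlarging $K$, I may further require that $G$ is unramified over $K$ and that the residue field $k_K$ satisfies $|k_K|>5$. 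Since the absolute Galois group of any local field is prosolvable, $\mathrm{Gal}(K/F)$ is finite solvable, so $K/F$ can be realized as a tower $F=F_0\subset\cdots\subset F_r=K$ with each $F_{i+1}/F_i$ cyclic of prime order.

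Iterating the base change relation \eqref{BC}, every $\pi' \in BC_{F_r/F}(\pi)$ is tempered with $\CL^{ss}(\pi') = \CL^{ss}(\pi)|_{W_K}$, which is unramified. Purity is preserved under restriction, since the eigenvalues of $\CL^{ss}(\pi')(\mathrm{Frob}_K) = \CL^{ss}(\pi)(\mathrm{Frob}_F)^{[k_K:k_F]}$ on any faithful representation are the $[k_K:k_F]$-th powers of Weil $q$-numbers of a common weight $w$, hence themselves Weil $|k_K|$-numbers of weight $w$. If some $\pi' \in BC_{F_r/F}(\pi)$ were supercuspidal, it would be a pure supercuspidal representation of the unramified group $G(K)$ over a residue field of cardinality $>5$, and the $\mathfrak{s}$-type hypothesis (inherited from $G$ over $F$ through either $p\nmid|W|$ by Fintzen's theorem, or by the direct assumption) would allow Corollary \ref{maincor} to apply, forcing $f\circ\CL^{ss}(\pi')$ to be ramified and contradicting the unramifiedness of $\CL^{ss}(\pi)|_{W_K}$. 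Hence no element of $BC_{F_r/F}(\pi)$ is supercuspidal, and $\pi$ is not incorrigible. The most delicate point will be articulating precisely that the $\mathfrak{s}$-type hypothesis descends through the base change $F \to K$: this is automatic for $p\nmid|W|$ since $p$ is unchanged, but the general formulation tacitly requires the hypothesis to be intrinsic to $G$ and its Levis and independent of the ground field.
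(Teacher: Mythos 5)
Your proof is correct and follows essentially the same strategy as the paper's: finite image of inertia gives a solvable (hence a tower of prime-order cyclic) extension $K/F$ over which the parameter is unramified, and then a ramification-forcing corollary of Theorem \ref{notunramified} produces the contradiction. The one cosmetic difference is that you invoke Corollary \ref{maincor} (pure supercuspidals have ramified parameters) by contradiction, whereas the paper invokes Corollary \ref{sph} (representations with pure unramified parameter lie in unramified principal series) directly; you also supply several details the paper leaves implicit, namely the finiteness of the inertia image, the prosolvability step, the preservation of purity under restriction, and the descent of the $\mathfrak{s}$-type hypothesis through base change.
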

\begin{proof}  Let $\pi$ be a pure supercuspidal representation of $G(F)$.  The image of the inertia group under $\CL^{ss}_F(\pi)$ is finite,
so there is a sequence of cyclic extensions of prime order, as in \eqref{cyclics}, such that, if $\pi_r$ is any member of
$BC_{F_r/F}(\pi)$, we have
$$\CL^{ss}_{F_r}(\pi_r) = \CL^{ss}_F(\pi)\mid_{W_{F_r}} $$
is unramified.  It then follows from Corollary \ref{sph} that $\pi_r$ is a constituent of an unramified principal series representation.
In particular, no member of $BC_{F_r/F}(\pi)$ can be supercuspidal.
\end{proof}

\subsection{Existence of tempered base change}\label{basec}   Suppose for the moment that $F$ is a $p$-adic field.   A procedure for defining 
cyclic base change can be constructed using the methods of \cite{Lab}.  Suppose first that $\pi$ is supercuspidal with central character of
finite order.  Then there is 
\begin{itemize}\label{items}
\item[(i)] A totally real number field $K$ with a local place $w$ such that $K_w \isoarrow F$;  
\item[(ii)] A  totally real cyclic extension $K'/K$ with $K'_w = K' \otimes_K F \isoarrow F'$, 
\item[(iii)] A connected reductive group $\CG$ over $K$, with $\CG(K_w) \isoarrow G(F)$ and 
\item[(iv)] $\CG(K_\sigma)$ compact modulo  center for all archimedean places $\sigma$ of $K$,
\item[(v)]  And an automorphic representation $\Pi$ such that $\Pi_w \isoarrow \pi$ and 
\item[(vi)] $\Pi_{v_i}$ isomorphic to a Steinberg representation at any chosen finite set of places $v_i$.  
\end{itemize}
Then it is proved in \cite{Lab}, using the stable twisted trace formula, that there is a non-empty collection $\{\Pi'\}$ of automorphic representations
of $\CG_{K'}$ such that, at every place $u$ at which $\Pi$ and $K'/K$ are unramified, $\Pi'_u$ is the unramified base change of $\Pi_u$.
We can then define $BC(\pi)$ to be the collection of $\Pi'_w$ for all such $K, K', \CG, \Pi$.  The stable twisted characters of such collections of $\Pi'_w$ are related to 
the stable character of $\pi$ but in the absence of a canonical notion of $L$-packet we omit the precise statement.

When Labesse defined his construction in \cite{Lab}, the stabilization of the  trace formula had not yet been established in either the twisted or the untwisted
setting.  Arthur reduced the stabilization to the fundamental lemma in a series of papers shortly thereafter, and a few years later the main step in the proof
of the fundamental lemma was completed by Ng\^o.  The stabilization of the twisted trace formula is contained in \cite{MW}; see also \cite{CHLN} for references
for the untwisted case.  Labesse needed hypothesis (vi) in the above list in order to work with a simplified version of the trace formula.  In principle (vi) is
no longer necessary, but the necessary consequences of the full stabilized trace formula have not yet been established (see \cite[\S CHL.IV.B]{CHLN} for an
example of the kind of work required).  Thus for practical purposes, the construction in \cite{Lab} still provides the most complete definition of local supercuspidal 
base change in general.  

Since we are only looking for semisimple Langlands parameters, we can reduce a general tempered $\pi$ to the supercuspidal 
case by means of parabolic induction.   Unfortunately, although Fargues and Scholze have defined a semisimple parametrization of all
irreducible admissible representations of $G(F)$, when $F$ is a $p$-adic field, there is no way to relate the parameter attached to
$\pi$ to that attached to the elements of the set $BC(\pi)$ defined by Labesse's construction, unless we know how to attach 
Galois parameters to the globalizations $\Pi$ as in (v).  If we could do that, then the relation between the parameters of $\Pi_u$ and $\Pi'_u$
at unramified places would suffice, by Chebotarev density, to establish the relation \eqref{BC} at the place of interest.  This reasoning has in fact
been applied for most classical groups, and for $G_2$, but it is not available in general.

Now suppose $F = k((t))$ as above.  We can certainly globalize $\pi$ as in the number field case, and then \cite{Laf18}
supplies the necessary global parametrization that is missing in the setting of number fields.  However,  the stable twisted trace formula is lacking for
function fields, so for most $G$, tempered base change is not (yet) available.  It should nevertheless be (relatively) straightforward, although time-consuming, for specialists to prove the necessary  statements, starting with the construction of the non-invariant trace formula by Labesse and Lemaire \cite{LL}, when $p$ is large relative to the group $G$. 

\subsection{Base change of large prime degree and fields of small order}\label{basechange}

Let $\pi$ be a supercuspidal representation as in Theorems \ref{mainthm7} and \ref{notunramified}.
Those theorems are stated under the hypothesis $q = |k|> 5$, which concerns the size of the residue field $k$
rather than the characteristic.  Theorem 7.2 of \cite{BFHKT} asserts that there is a constant $c(\pi)$ such that, if $m > c(\pi)$, and $F'/F$ is cyclic
of prime degree $m$ then there exists a base change $\pi'$ of $\pi$ to $F'$, in the sense that the $\CL^{ss}(\pi')$ is the restriction
of $\CL^{ss}(\pi)$ to the Weil group of $F'$.  In particular we may assume $F' = k'((t))$ is an unramified extension, where $[k':k] = m$ and therefore $|k'| = q^m$.  In particular, the hypotheses on $q$ are satisfied for $\pi'$.   Write $\pi_{m}$ instead of $\pi'$. 
Then $\CL^{ss}(\pi)$ is unramified if and only if $\CL^{ss}(\pi_m)$ is unramified.  We thus have the following alternative:
\begin{enumerate}
\item  For some $m \geq \sup(3,c(\pi))$ $\pi_m$ is  supercuspidal, and then the results of Theorems \ref{mainthm7} and \ref{notunramified} remain valid for $\pi$, without
any hypothesis on $q$;
\item Or else for all $m \geq \sup(3,c(\pi))$ $\pi_m$ is not supercuspidal.
\end{enumerate}

Although the second alternative is clearly absurd, we don't see how to exclude it by available means.  At least for the toral supercuspidals considered by
Chan and Oi, the results of \cite[\S 8]{BFHKT} show that $\pi_m$ remains supercuspidal for almost all prime $m$.  However, we have not proved that their parameters
are pure.

The above alternatives do seem to provide a route to proving that a given pure supercuspidal $\pi$ is not incorrigible, even when $|k| \leq 5$.  If we are in the first alternative,
then the residue field satisfies the hypotheses of Theorems \ref{mainthm7} and \ref{notunramified}.   We can then define what it means for $\pi$ to be incorrigible if we have
access to cyclic base change of all prime degrees -- not just those guaranteed by \cite{BFHKT} -- and argue as in the previous sections.  Under the second (absurd) alternative, we have already reduced the supercuspidality by base change
and we can then argue by induction on the size of the cuspidal support.

Too much attention should not be given to these remarks, however.  The restriction on $|k|$ is only relevant when the residue characteristic is $2, 3,$ or $5$.  These are the primes
that tend to divide the order of Weyl groups.  So it would be unnatural to try to formulate unconditional results based on these observations.

\subsection{The case of $GL(n)$}  
We again assume $F$ is a local field of positive characteristic.  We shall illustrate the strategy discussed above for $G = GL(n)$.  It has been proved by Henniart and Lemaire in \cite{HeLe} that $G = GL(n)$ admits tempered
base change;  they even prove  that $BC_{F'/F}(\pi)$ is a singleton for any $\pi$.  Moreover,  
every supercuspidal representation of every Levi subgroup of $G(F)$ contains an 
$\mathfrak{s}$-type \cite{BK}.  Thus the above discussion  applies.  Before drawing the relevant conclusion, we make the following observation.

The following lemma may be derived from the work of Bushnell and Kutzko. 

\begin{lemma}\label{Bushnell-Kutzko}  Let $\pi$ be a supercuspidal representation of $GL(n,F)$ for $F$ a local field. One of the following is true:

(a) There exists a principal hereditary order $\mathfrak A \subset M_n(F)$, a field $E \subset M_n(F)$ which is an extension of $F$ of degree $>1$, such that $E^\times$ normalizes $\mathcal A$, and, writing $\mathfrak B$ for the centralizer of $E$ in $\mathfrak A$ and $\mathfrak P$ for the maximal ideal of $\mathfrak A$, a representation $\Lambda$ of $ E^\times \cdot (\mathfrak A^\times \cap (\mathfrak B + \mathfrak P) )$ such that $\pi$ is the induced representation of $\Lambda$.

(b) There exists a representation $\Lambda$ of $ F^\times GL_n (\mathcal O_F)$ such that $\pi$ is the induced representation of $\Lambda$.

\end{lemma}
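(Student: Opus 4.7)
The plan is to deduce the lemma directly from the Bushnell--Kutzko classification of supercuspidal representations of $GL_n(F)$ via simple types \cite{BK}. The main input is the theorem that every irreducible supercuspidal $\pi$ of $GL_n(F)$ contains a \emph{maximal simple type} $(J,\lambda)$ and is moreover compactly induced from the normalizer $\tilde J$ of $\lambda$ in $GL_n(F)$: there exists an irreducible representation $\Lambda$ of $\tilde J$ extending $\lambda$ such that $\pi \cong \text{c-Ind}_{\tilde J}^{GL_n(F)} \Lambda$. The content of the lemma is then to identify $\tilde J$ explicitly in the two possible cases (positive depth vs.\ depth zero) and to rewrite $J$ in the form demanded by the statement.

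In the positive depth case, the maximal simple type is attached to a simple stratum $[\mathfrak A, n, 0, \beta]$ in which $\mathfrak A$ is a principal hereditary $\mathcal O_F$-order in $M_n(F)$, $E = F[\beta]$ is a field strictly larger than $F$, and $E^\times$ normalizes $\mathfrak A$.   Let $B$ denote the centralizer of $E$ in $M_n(F)$, set $\mathfrak B = \mathfrak A \cap B$, and let $\mathfrak P$ be the Jacobson radical of $\mathfrak A$. The Bushnell--Kutzko group $J = J(\beta,\mathfrak A)$ admits the description
\[
J \;=\; \mathfrak A^\times \cap (\mathfrak B + \mathfrak P),
\]
which is a standard consequence of the interplay between $J^1 = J \cap (1+\mathfrak P)$ and the filtration of $\mathfrak B^\times$ established in Chapter 3 of \cite{BK}. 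The normalizer of $\lambda$ in $GL_n(F)$ is then equal to $E^\times \cdot J$, so setting $\Lambda$ to be the chosen extension of $\lambda$ to $\tilde J$ yields the conclusion of (a).

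In the depth zero case the simple stratum is null (equivalently, $\beta$ can be taken in $F$), $\mathfrak A$ is a \emph{maximal} hereditary order, and thus after conjugation $\mathfrak A = M_n(\mathcal O_F)$. The group $J$ then degenerates to $GL_n(\mathcal O_F)$, its normalizer in $GL_n(F)$ is $F^\times GL_n(\mathcal O_F)$, and $\lambda$ is the inflation to $GL_n(\mathcal O_F)$ of a cuspidal representation of the finite reductive group $GL_n(k_F)$, extended to $F^\times GL_n(\mathcal O_F)$ by the choice of a central character. This produces conclusion (b).

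The main work, if any, is bookkeeping: making sure that the description $J(\beta,\mathfrak A) = \mathfrak A^\times \cap (\mathfrak B + \mathfrak P)$ given in the statement really does agree with the definition of $J$ via simple characters used in \cite{BK}, and checking that the normalizer of $\lambda$ is generated by $J$ together with $E^\times$ (respectively $F^\times$ in the level zero case). Both facts are essentially by-products of the construction in \cite{BK}, so no new ideas are needed; the only subtlety is keeping the two possible normalizations of the Jacobson radical and hereditary order consistent.
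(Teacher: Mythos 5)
Your proposal follows the same overall route as the paper (Bushnell--Kutzko simple types, Theorem 8.4.1 of \cite{BK}, two cases according to whether the stratum is nontrivial or degenerate), but there is one genuine error: the claimed equality
\[
J(\beta,\mathfrak A) \;=\; \mathfrak A^\times \cap (\mathfrak B + \mathfrak P)
\]
is not correct in general, and there is no ``standard consequence'' that delivers it. The right-hand side equals $U(\mathfrak B)\cdot(1+\mathfrak P) = U(\mathfrak B)\,U^1(\mathfrak A)$, whereas $J(\beta,\mathfrak A) = U(\mathfrak B)\,(1+\mathfrak J^1(\beta,\mathfrak A))$ with $\mathfrak J^1(\beta,\mathfrak A) = \mathfrak J(\beta,\mathfrak A)\cap\mathfrak P$, and $\mathfrak J(\beta,\mathfrak A) = \mathfrak B + \mathfrak H^{[(r+1)/2]}(\beta,\mathfrak A)$ is built inductively with $\mathfrak H^{[(r+1)/2]}$ typically a proper subring of $\mathfrak P$. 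So in general $J(\beta,\mathfrak A)$ is \emph{strictly smaller} than $\mathfrak A^\times \cap (\mathfrak B + \mathfrak P)$. As stated, your argument therefore produces a representation of $E^\times J(\beta,\mathfrak A)$, not of the group $E^\times\cdot(\mathfrak A^\times\cap(\mathfrak B+\mathfrak P))$ demanded by the statement.

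What actually holds, and what the paper uses, is the \emph{inclusion} $\mathfrak J(\beta,\mathfrak A) \subseteq \mathfrak B + \mathfrak P$, hence $J(\beta,\mathfrak A) \subseteq \mathfrak A^\times \cap (\mathfrak B + \mathfrak P)$. One then inducts in two stages: first from $E^\times J(\beta,\mathfrak A)$ to $E^\times\cdot(\mathfrak A^\times\cap(\mathfrak B+\mathfrak P))$, and then to $GL_n(F)$. Transitivity of (compact) induction produces the required representation $\Lambda$ of the intermediate group. The paper does the analogous thing in case (b) as well: $U(\mathfrak A)$ is only asserted to be a compact open subgroup, hence contained in a conjugate of $GL_n(\mathcal O_F)$, and one induces in stages rather than asserting $\mathfrak A = M_n(\mathcal O_F)$ outright (which happens to be fine in the supercuspidal maximal case, but the two-stage argument avoids having to justify maximality). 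You should also drop the claim that the normalizer $\tilde J$ of $\lambda$ is precisely $E^\times J$ and simply cite Theorem 8.4.1(ii) of \cite{BK}, which directly asserts that $\pi$ is compactly induced from a representation of $E^\times J(\beta,\mathfrak A)$; whether or not that group is the full normalizer is irrelevant for the lemma. With these corrections your argument agrees with the paper's proof.
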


\begin{proof} By \cite[Theorem 8.4.1]{BK}, there exists a simple type $(J, \lambda)$ in $G$ such that $\pi \mid J$ contains $\lambda$. The definition \cite[Definition 5.5.10]{BK} of simple type splits into two cases, (a) and (b).

The case (b) will be easier to handle -- so much so that it is not even necessary to give the definition of this case. Instead, we note that in this case, by \cite[Theorem 8.4.1(iii)]{BK}, there exists a representation $\Lambda $ of $F^\times \mathbf  U( \mathfrak A)$, where $\mathbf  U( \mathfrak A)$ is the unit group of a hereditary order $\mathfrak A$, such that $\pi = \operatorname{Ind} (\Lambda)$.  Because $\mathfrak A$ is a heredeitary order, $\mathbf U(\mathfrak A)$ is a compact open subgroup of $GL(n, F)$ and thus, up to conjugation, is contained in the maximal compact $GL_n(\mathcal O_F)$, By first inducing $\Lambda$ from $F^\times \mathbf  U( \mathfrak A)$ to $F^\times GL_n(\mathcal O_F)$, we obtain case (b) above.

Now we turn to case (a). Let $A$ be the algebra of $n \times n$ matrices over $F$. In this case, $\mathfrak A$ is a principal $\mathcal O_F$-order in $A$ and $[\mathfrak A, n, 0, \beta]$ is a simple stratum. Let us unpack this definition. The order $\mathfrak A$ is a compact open $\mathcal O_F$-subalgebra of $A$. We write $\mathfrak P$ for the Jacobson radical of $\mathfrak A$. We let $\beta$ be an element of $A$ and take $n$ to be a positive integer. We define $E = F[\beta]$, $\mathfrak B $ the centralizer of $E$ in $\mathfrak A$, and $\mathfrak P$ the maximal ideal of $\mathfrak A$. We set \cite[Definitions 1.4.3 and 1.4.5]{BK} \[ k_0(\beta, \mathfrak A) =  \max  \{ k \in \mathbb Z \mid \{ x \in \mathfrak A \mid \beta x - x\beta \in \mathfrak P^k \} \not\subset \mathfrak B + \mathfrak P \} .\]

Then the assumption that $[\mathfrak A, n, 0, \beta]$ is a simple stratum means that $\mathfrak A$ is hereditary, $E^\times$ normalizes $\mathfrak A$, $v_{\mathfrak A}(\beta) =-n$, and $0< - k_0(\beta, \mathfrak A)$ \cite[Definition 1.5.5]{BK}..

In this context, \cite[Definition 3.1.8]{BK} defines an algebra $\mathfrak J(\beta)$ by an inductive procedure. For us, the only relevant feature of this definition is that it is contained in $\mathfrak B + \beta^{ [\frac{r+1}{2} ]}$ where  $r = -  k_0(\beta, \mathfrak A) >0$ (\cite[3.1.6]{BK}) and thus $\mathfrak J \subseteq \mathfrak B + \mathfrak P$. We can next define \cite[(3.1.14)]{BK} a group $J( \beta, \mathfrak A)$ as the intersection of $\mathfrak J$ with the group of units $\mathbf U^0 (\mathfrak A) = \mathfrak A^\times$. 

We can finally state the conclusion of \cite[Theorem 8.4.1(ii)]{BK} in this case, which is that there is a representation $\Lambda$ of $E^\times J(\beta, \mathfrak A)$ such that $\pi= \operatorname{Ind}(\Lambda)$. We will use only this structure, and will not concern ourselves with the exact construction of $\Lambda$. If $E=F$ then, because $J(\beta, \mathfrak A)$ is contained in a maximal compact subgroup, we are again in case (b), so we may assume $E \neq F$.

Since $J(\beta, \mathfrak A)$ is contained in $\mathfrak A^\times \cap (\mathfrak B + \mathfrak P)$, we can induce $\Lambda$ from $E^\times J(\beta, \mathfrak A)$ to $E^\times \cdot (\mathfrak A^\times \cap (\mathfrak B + \mathfrak P))$, and we are in case (a) above.\end{proof}

\begin{lemma}\label{GLn-pure} If $\pi$ satisfies case (b) of Lemma \ref{Bushnell-Kutzko}, then $\pi$ is pure and ramified. \end{lemma}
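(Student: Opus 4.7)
The strategy is to establish purity by globalization combined with L. Lafforgue's theorem for $GL(n)$ over function fields, and then to obtain ramification as an immediate corollary of Theorem \ref{mainthm7}. Case (b) of Lemma \ref{Bushnell-Kutzko} exhibits $\pi$ as $\operatorname{c-Ind}_{F^\times GL_n(\mathcal{O}_F)}^{GL(n,F)}\Lambda$, so $\pi$ is compactly induced from an open compact-mod-center subgroup. Assuming $n \geq 2$ (the case $n=1$ is vacuous, since any character of $F^\times$ is trivially pure), $GL(n)$ is not a torus, and this places $\pi$ squarely in the setting of Theorem \ref{mainthm7}. Consequently, once purity of $\CL^{ss}(\pi)$ is established, the ramification assertion follows at once from that theorem, after (if necessary) base changing to an unramified extension of sufficiently large degree to meet the $q > 5$ hypothesis, using cyclic base change for $GL(n)$ from \cite{HeLe} as in \S\ref{basechange}.

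For purity, I would globalize $\pi$ by a Poincar\'e series argument modeled on Theorem \ref{poin}: taking a matrix coefficient of $\Lambda$ (supported in $F^\times GL_n(\mathcal{O}_F)$) at a chosen place $v_0$ and prescribed spherical or tame data at the remaining places of a suitable curve $X/k$, produce a cuspidal automorphic representation $\Pi$ of $GL(n)$ over $K=k(X)$ with $\Pi_{v_0}\cong \pi$. L. Lafforgue's global Langlands correspondence for $GL(n)$ over function fields then attaches to $\Pi$ an irreducible $n$-dimensional $\ell$-adic Galois representation $\sigma(\Pi)$, and the Ramanujan-Petersson theorem for function-field $GL(n)$ (a consequence of L. Lafforgue's work combined with Deligne's Weil~II) asserts that $\sigma(\Pi)$ is pure of some weight. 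The local restriction $\sigma(\Pi)|_{W_{K_{v_0}}}$ gives a Weil-Deligne representation $(\rho_{v_0},N_{v_0})$ whose semisimplification is identified with $\CL^{ss}(\pi)$ by Theorem \ref{paramVL}(iii), and global purity forces $(\rho_{v_0},N_{v_0})$ to be pure in the Weil-Deligne sense of Definition \ref{WDpure}.

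The key obstacle is to bridge purity as a Weil-Deligne representation (which only constrains the Frobenius eigenvalues on graded pieces of the monodromy filtration) and purity as a Weil representation in the sense needed by our definition (all Frobenius eigenvalues of the same weight). I would close this gap by invoking the Laumon-Rapoport-Stuhler local Langlands correspondence for $GL(n)$ over function fields: the supercuspidal $\pi$ corresponds to an irreducible $n$-dimensional Weil parameter with $N_{v_0}=0$, so the monodromy filtration is trivial and WD-purity collapses to Weil-purity. This establishes that $\CL^{ss}(\pi)$ is pure. Applying Theorem \ref{mainthm7} to the pure, compactly-induced supercuspidal $\pi$ on the non-toral group $GL(n)$ then yields that $\CL^{ss}(\pi)$ is ramified, completing the proof. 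A proof which avoided appealing to LRS would have to derive the vanishing of $N_{v_0}$ directly from the structure of the compact induction in case (b), which would be the principal difficulty.
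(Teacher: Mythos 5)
Your proposal takes a genuinely different route from the paper, and its crucial step leans on a result that the paper is deliberately trying to avoid. You correctly identify the gap between WD-purity of the globalized parameter (which is what the monodromy weight filtration gives) and Weil-purity of $\CL^{ss}(\pi)$, and you propose to close it by citing Laumon--Rapoport--Stuhler to obtain $N_{v_0}=0$. But the stated purpose of \S\ref{BCIR} is to prove a weak version of the local correspondence for $GL(n)$ \emph{without} invoking LRS, ``to provide a sufficiently general template that has the potential of being extended to general $G$.'' Appealing to LRS at the key step re-imports the full strength of what the section is trying to establish by an alternative method, so the argument---while it would be logically valid over function fields where LRS is available---defeats the programmatic goal and cannot serve as a template for other groups. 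You anticipate this objection in your last sentence, but there the proposal stops short of closing the gap.

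The paper's actual proof is much shorter and entirely local, and it avoids the WD-versus-Weil purity issue altogether by never passing through a global monodromy-weight-filtration argument. It observes that the determinant of any element of $F^\times GL_n(\mathcal{O}_F)$ has valuation divisible by $n$, so the unramified order-$n$ character $\alpha(g)=e^{2\pi i\, v(\det g)/n}$ restricts trivially to the inducing subgroup and hence $\pi\otimes\alpha\cong\pi$. By the twisting compatibility of the Genestier--Lafforgue parametrization \cite[Remarque 0.2]{GLa}, $\sigma_\pi$ is then fixed by tensoring with the corresponding order-$n$ unramified character of $W_F$. A matrix $a$ implementing this self-isomorphism satisfies $\sigma_\pi(\operatorname{Frob})\circ a = e^{2\pi i/n}\, a\circ\sigma_\pi(\operatorname{Frob})$, so its eigenvalue set is stable under multiplication by $e^{2\pi i/n}$; hence $a$ has $n$ distinct eigenvalues, each eigenspace is one-dimensional, and one obtains $\sigma_\pi \cong \operatorname{Ind}_{W_{F_n}}^{W_F}\chi$ for a character $\chi$ of the unramified degree-$n$ subgroup. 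A character is automatically pure (of some possibly non-integral weight), so $\sigma_\pi$ is pure; ramification then follows from the earlier results of the paper. The idea you are missing is precisely this self-twist observation, which replaces both the globalization step and the appeal to LRS by elementary $\ell$-adic linear algebra.
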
 

\begin{proof}The determinants of elements of $GL_n(\mathcal O_F)$ must have zero valuation, and because $F^\times$ consists of scalars, the determinants of its elements must have valuation divisible by $n$. It follows that, if $\alpha$ is the character of $GL(n,F)$ defined by $\alpha(g) = e^{  \frac{ 2 \pi i}{n} v( \det (g))}$ then
\[ \pi \otimes \alpha = \operatorname{Ind} (\Lambda)\otimes \alpha=\operatorname{Ind} (\Lambda \otimes \alpha )= \operatorname{Ind} (\Lambda  ) = \pi .\]

By \cite[Remarque 0.2]{GLa}, it follows that the Genestier-Lafforgue parameter $\sigma_\pi$ is stable under tensor-product with the one-dimensional unramified representation of $W_F$ sending Frobenius to $e^{ 2\pi i/n}$. Let $a$ be a matrix representing that isomorphism, then $\sigma_{\pi} (\operatorname{Frob}_q) \circ a = e^{ 2\pi i/n} a \circ \sigma_\pi(\operatorname{Frob}_q)$ so $a$ is conjugate to $e^{2 \pi i/n }a$. Thus the set of eigenvalues of $a$ is stable under multiplication by $e^{ 2\pi i/n}$, so $a$ has $n$ distinct eigenvalues, and thus $a$ has a one-dimensional eigenspace. This one-dimensional eigenspace is a one-dimensional representation $\chi$ of the subgroup of $W_F$ corresponding to the degree $n$ unramified extension $F_n$ of $F$, which, because Frobenius permutes the eigenspaces, gives an isomorphism $\operatorname{Ind}_{ W_{F_n}}^{W_F} \chi \cong \sigma_\pi$.  Since $\chi$ is a one-dimensional representation, it is automatically pure of some weight (possibly non-integral), so $\sigma_\pi$ is pure.  By our previous result, $\sigma_\pi$ is ramified. \end{proof}

We now assume given $\pi$ in case (a) of Lemma \ref{Bushnell-Kutzko}, and prepare for the construction of a Poincair\'e series by choosing compact subgroups at each place.

It follows from an elementary calculation or  \cite[p. 76 and p. 183]{BK} that a principal hereditary order $\mathfrak A$, after choosing a basis, can be put into the following form: We choose a divisor $e$ of $n$ and then we take all $n\times n$ matrices decomposed into $n/e \times n/e$ blocks of size $e \times e$, where all the blocks on the diagonal and above have entries in the ring of integers $\mathcal O_F$, and all the the blocks below the diagonal have entries in the maximal ideal of $\mathcal O_F$. We fix for the remainder of this argument a basis where $\mathfrak A$ has this form.

We can choose a finite field $\mathbb F_q$ such that $F \cong \mathbb F_q((t))$ and fix such an isomorphism. This identifies $F$ with the local field $\mathbb F_q(t)_0$ of $\mathbb P^1_{\mathbb F_q}$ at $0$.  We now define subgroups $U_x$ of $GL_n(\mathbb F_q(t)_x)$ for each place $x$ of $\mathbb F_q(t)$. For $x\neq 0,\infty$, we take $U_x$ to be the standard maximal compact subgroup. We take $U_\infty$ to consist of block-diagonal matrices in $GL_n(\mathbb F_q((t^{-1})))$ where all the blocks on the diagonal and below are in $M_e (\mathbb F_q[[t^{-1}]])$, all the blocks above the diagonal are in $M_e ( t^{-1} \mathbb F_q[[t^{-1}]])$, and all the diagonal blocks, modulo $t^{-1}$ are in $n/e$ fixed Borel subgroups of $GL_n$, to be chosen in Lemma \ref{describe-S}. We take $U_0$ to be $ E^\times (\beta, \mathfrak A)$.

Let $ S= GL_n(\mathbb F_q(t)) \cap \prod_x U_x$.

\begin{lemma}\label{describe-S} For a suitable choice of Borel subgroups, the group $S$  is contained in a maximal torus $T$ of $GL_n(\mathbb F_q)$, and furthermore, that the $n$-dimensional vector space, when viewed as a representation of $S$, is a sum of $n/k$ characters each repeated $k$ times for some $k>1$ a divisor of $n$. \end{lemma}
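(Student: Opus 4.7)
The approach mirrors Proposition~\ref{parah}: I analyze the matrix entries of $g \in S$ place by place, using the product formula $\sum_v v_v(g_{ij}) = 0$ on $\mathbb{F}_q(t)$ to extract a rigid structure. At places $v \notin \{0, \infty\}$, the condition $g \in GL_n(\mathcal{O}_v)$ forces the entries of $g$ to lie in $\mathbb{F}_q[t, t^{-1}]$. At $\infty$, the block structure of $U_\infty$ yields lower bounds $v_\infty(g_{ij}) \geq 1$ for $(i,j)$ in a strictly above-diagonal block (in the $e \times e$ decomposition) and $v_\infty(g_{ij}) \geq 0$ otherwise, with the diagonal blocks reducing modulo $t^{-1}$ into the chosen Borel subgroups $B_I$.

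To control the behavior at $0$, I first show $g \in \mathfrak{A}^\times$. Writing $g = \beta_0 h$ with $\beta_0 \in E^\times$ and $h \in J(\beta, \mathfrak{A}) \subset \mathfrak{A}^\times$, I apply the product formula to $\det g$. The valuations at the good places contribute zero; $v_0(\det h) = 0$ since $\det h \in \mathcal{O}_F^\times$; $v_\infty(\det g) = 0$ because the reduction of $g$ modulo $t^{-1}$ is block lower-triangular with diagonal blocks in $B_I \subset GL_e(\mathbb{F}_q)$, so $\det g \bmod t^{-1}$ is a nonzero scalar; and $\det \beta_0 = N_{E/F}(\beta_0)^{n/f}$ with $f = [E:F]$, giving $v_0(\det \beta_0) = (n/f)\, f_E\, v_E(\beta_0)$ where $f_E$ denotes the residue degree. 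The product formula thus forces $v_E(\beta_0) = 0$, whence $\beta_0 \in \mathcal{O}_E^\times$ and $g \in \mathfrak{A}^\times$. In particular, $v_0(g_{ij}) \geq w_{ij}$, where $w_{ij} = 0$ in diagonal or above-diagonal blocks and $w_{ij} = 1$ in strictly below-diagonal blocks.

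Applying the product formula $v_0(g_{ij}) + v_\infty(g_{ij}) = 0$ entry by entry, the sum of the lower bounds from the previous two paragraphs is at least $1$ for all off-block-diagonal entries (forcing them to vanish) and exactly $0$ for diagonal-block entries (forcing them to be nonzero constants in $\mathbb{F}_q^\times$). Consequently, $S$ embeds into $\prod_{I=1}^{n/e} B_I \subset GL_n(\mathbb{F}_q)$ as a block-diagonal subgroup. The additional constraint $h \in J(\beta, \mathfrak{A}) \subset \mathfrak{A}^\times \cap (\mathfrak{B} + \mathfrak{P})$, together with $\beta_0 \in \mathcal{O}_E \subset \mathfrak{B}$, shows $g \in \mathfrak{B} + \mathfrak{P}$, so the residue $\bar g$ lies in the image of $\mathfrak{B}$ in $\mathfrak{A}/\mathfrak{P} = \prod_I M_e(\mathbb{F}_q)$. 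This image is governed by the embedding $\mathcal{O}_E \hookrightarrow \mathfrak{B}$: the residue field $\mathbb{F}_{q^{f_E}}$ sits inside each factor $M_e(\mathbb{F}_q)$ (so $f_E \mid e$), and the residue of $\mathfrak{B}$ in each block is naturally a matrix algebra over $\mathbb{F}_{q^{f_E}}$. Choosing each $B_I$ to be the preimage of a Borel subgroup under this $\mathbb{F}_{q^{f_E}}$-structure, the intersection $B_I \cap (\text{residue of } \mathfrak{B})_I$ is a maximal $\mathbb{F}_{q^{f_E}}$-torus in each block, and assembling produces a maximal torus $T \subset GL_n(\mathbb{F}_q)$ containing $S$. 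The natural representation on $\mathbb{F}_q^n \otimes \bar{\mathbb{F}}_q$ decomposes along the Galois orbits of the characters of $T$ restricted to $S$, giving $n/k$ distinct characters each with common multiplicity $k > 1$, where $k > 1$ is forced because $E \neq F$ makes the residue structure of $\mathfrak{B}$ strictly smaller than that of $\mathfrak{A}$.

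The main obstacle will be the last step: precisely identifying the residue of $\mathfrak{B}$ in $\mathfrak{A}/\mathfrak{P}$ and making a Borel subgroup choice compatible with the $\mathbb{F}_{q^{f_E}}$-structure so that $S$ sits inside a single maximal torus rather than merely inside a product of Borels, as well as verifying $k > 1$ uniformly across the subcases where $E/F$ is unramified, totally ramified, or mixed. This requires carefully unpacking the Bushnell–Kutzko structure of $\mathfrak{B}$ as a hereditary $\mathcal{O}_E$-order in the centralizer $M_{n/f}(E)$ of $E$ inside $M_n(F)$.
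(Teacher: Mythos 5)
Your initial reductions—using the product formula to show $v_0(\det g)=0$, deducing $\beta_0\in\mathcal O_E^\times$ hence $g\in\mathfrak A^\times\cap(\mathfrak B+\mathfrak P)$, and then using the block lower-bounds at $0$ and $\infty$ to force $g$ into a block-diagonal product of Borels inside $GL_n(\mathbb F_q)$—follow the same track as the paper's proof and are essentially correct. (One small caveat: the product formula is an inequality once you only know $g_{ij}\in\mathcal O_v$ at the good places, but that suffices.)

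However, the step you flag as ``the main obstacle'' is exactly where your proposal has a real gap, and the sketch you offer does not cover all cases. You describe the residue of $\mathfrak B$ in each block as a matrix algebra over $\mathbb F_{q^{f_E}}$ and propose choosing $B_I$ ``compatible with the $\mathbb F_{q^{f_E}}$-structure'' so that $B_I\cap(\text{residue of }\mathfrak B)_I$ is a maximal torus. This breaks down when $E/F$ is totally ramified: then $f_E=1$, your $\mathbb F_{q^{f_E}}$-structure is trivial, and the intersection of a Borel with $GL_e(\mathbb F_q)$ is the Borel itself, not a torus—so $S$ is not yet pinned to a torus, and no multiplicity $k>1$ emerges. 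In that case the needed structure comes not from the residue field but from the fact that a uniformizer of $E$ has determinant whose valuation is not divisible by $n$, hence its conjugation action \emph{permutes} the $n/e$ diagonal blocks of $\mathfrak A$ nontrivially. The paper exploits this: it divides the blocks into orbits of size $k>1$ and chooses the Borels $B_I$ so that in each orbit, some $B_I$ is sent by the uniformizer to a Borel in general position with the next one, forcing $S$ into a maximal torus by the intersection of opposite Borels. In the residue-extension case the paper makes a second, separate construction: it picks an $\mathbb F_{q^k}$-basis $v_1,\dots,v_{e/k}$ and extends $v_1,\dots,v_{e/k},xv_{e/k},\dots,xv_1$ to an $\mathbb F_q$-basis so that commuting with $x$ and lying in the upper-triangular Borel simultaneously forces elements to be $\mathbb F_{q^k}$-diagonal, and in fact $\mathbb F_q$-scalar on each $\mathbb F_{q^k}$-line. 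Your sketch gestures at something like this but does not give the construction, and it has no mechanism at all for the ramified case; you also do not prove $k>1$ beyond a heuristic. So the proposal is correct up to the block-diagonal reduction but is incomplete precisely where the substance of the lemma lies.
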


\begin{proof}
First observe that the valuation of the determinant of any element of $U_x$ for any $x \neq 0$ is $0$. It follows that for $g \in S$, $\det g$ has valuation $0$ at every place away from $0$, thus by the product formula has valuation $0$ at $0$ as well.  The valuation of the determinant of every element of $\mathfrak A^\times $ is $0$, so $g$ must be the product of an element of $\mathfrak A^\times \cap (\mathfrak B + \mathfrak P)$ with a unit  $u \in E^\times$. Now every unit of $E^\times$ is contained in $\mathfrak A^\times$ by \cite[Proposition 1.2.1(i)]{BK}. Because $u \in E^\times$ and thus $u$ commutes with $\beta$, we have $u \in \mathfrak B$. Thus \[ g \in    u \cdot  (\mathfrak A^\times \cap (\mathfrak B + \mathfrak P) )  = \mathfrak A^\times \cap (\mathfrak B + \mathfrak P) .\]

In particular, $g$, at $0$, is contained in $\mathfrak A$. Together with our assumptions at other places, we conclude that every matrix entry of $g$ is integral at each place of $\mathbb F_q(t)$, hence in fact lies in $\mathbb F_q$, so $g \in GL_n(\mathbb F_q)$. Furthermore, by the integrality conditions at $0$, we see that every $e \times e$ block of $g$ below the diagonal vanishes, and by the integrality conditions at $\infty$, every $e \times e$ block of $g$ above the diagonal vanishes. So $g$ is a block-diagonal matrix, i.e. $g \in (GL_e(\mathbb F_q))^{n/e}  $. Furthermore, each of the diagonal matrices must be contained in some fixed Borel, to be chosen later.

We now use the fact that $E/F$ is a nontrivial extension. Thus it either ramifies or contains an extension of the residue field.

We handle the case when $E$ is ramified first. In this case, $E$ contains an element $x$ whose determinant does not have valuation divisible by $n$, and thus the action of $x$ on $\mathfrak A$ must permute the $n/e$ diagonal blocks.  The action on the blocks divides them into $n/(ek)$ orbits of size $k$ for some $k>1$ dividing $n/e$. Because elements of $\mathfrak B$ commute with $E$, and thus in particular with $x$, their value mod $\mathfrak p_F$ on one block in the orbit determines their value on every other block in the orbit. So, mod $\mathfrak P$, the elements of $\mathfrak B$ lie in $M_e(\mathbb F_q)^{n/(ek)}$, and the units of $\mathfrak B$ lie in $GL_e(\mathbb F_q)^{n/(ek)}$. To define $U_{\infty}$, we choose $n/e$ Borel subgroups of $GL_e(\mathbb F_q)$ such that, in each orbit, at least one of the Borel subgroups is sent by $x$ to a Borel in general position with respect to the next Borel in the orbit.  It follows that each element of $S \subset GL_e(\mathbb F_q)^{n/(ek)}$ lies in the intersection of two Borels in general position, and thus lies in the maximal torus, which is contained in a maximal torus of $GL_n$. Furthermore, the $n$-dimensional representation of $S$ is the restriction of the $n$-dimensional representation of $GL_e(\mathbb F_q)^{n/(ek)}$, which is the sum of $n/(ek)$ standard representations, each repeated $k$ times, so as a representation of $S$ it is the sum of $n/k$ characters, each repeated $k$ times, as desired.

We handle the case when $E$ contains a residue field extension $\mathbb F_{q^k}$ next. In this case, $E$ contains an element $x$ which generates the residue field extension. Because $x$ is a unit, $x$ lies in $\mathfrak A$. Restricting to each of the $n/e$ diagonal blocks and modding out by $\mathfrak p_F$, the element $x$ satisfies the characteristic polynomial of a generator of $\mathbb F_{q^k}$ and thus generates a subfield isomorphic to $\mathbb F_{q^k}$. Thus, the centralizer of $x$ in each block is isomorphic to $GL_{e/k}(\mathbb F_{q^k})$, and every unit in $\mathfrak B + \mathfrak P$, restricted to each diagonal block and reduced modulo $\mathfrak p_F$, must lie in this $GL_{e/k}(\mathbb F_{q^k})$.

We choose a Borel as follows: Fix a $\mathbb F_{q^k}$-basis of the $e$-dimensional vector space $v_1,\dots, v_{e/k}$, and then extend the sequence \[v_1,\dots, v_{e/k}, xv_{e/k}, xv_{e/k-1},\dots, xv_1\] arbitrarily to an $\mathbb F_q$-basis of this space. For each block, choose a Borel consisting of linear transformations which are upper-triangular with respect to this basis, and use these to define $U_{\infty}$. Then every $g \in S$, restricted to this block, will lie in the intersection of $GL_{e/k}( \mathbb F_{q^k})$ with this Borel. Hence $g$ will be upper-triangular with respect to the $\mathbb F_{q^k}$-basis $v_1,\dots, v_k$ and upper-triangular with respect to the $\mathbb F_{q^k}$-basis $xv_{e/k}, x v_{e/k-1},\dots, xv_1$, thus $\mathbb F_{q^k}$-diagonal with respect to the basis $v_1,\dots, v_k$. Furthermore, $g$ must act on $v_1,\dots, v_k$ by multiplication by elements of $\mathbb F_q$, since otherwise it would not preserve the $\mathbb F_q$-subspace generated by $v_1,\dots, v_k$, so in fact $g$ is contained in the diagonal of $GL_{e/k}(\mathbb F_q)$ inside $GL_{e/k}(\mathbb F_{q^k}) $ inside $GL_e(\mathbb F_q)$.  Passing from $GL_{e/k}(\mathbb F_q)$ to $GL_e(\mathbb F_q)$ in this way sends the maximal torus to a subset of the maximal torus, as desired, and expresses the $e$-dimensional standard representation of $GL_e$ as a sum of $k$ copies of the $e/k$-dimensional standard representation of $GL_{e/k}$, hence the sum of $k$ copies of $e/k$ characters of $S$, as desired. \end{proof}

Note that $U_\infty$ is an Iwahori subgroup of $GL_n ( \mathbb F_q((t^{-1})))$ and thus surjects onto $((\mathbb F_q)^\times)^n$.

\begin{lemma}\label{chi-extension} If $q>2$, any (one-dimensional) character of $S$ may be extended to a nontrivial character of $U_\infty$.

Furthermore, if $q>3$, any character of $S$ may be extended to two characters, $\chi_a,\chi_b$, both pulled back from $((\mathbb F_q)^\times)^n$, where $\chi_a$ is trivial on at least one copy of $\mathbb F_q^\times$ and $\chi_b$ is trivial on no copies of $\mathbb F_q^\times$. \end{lemma}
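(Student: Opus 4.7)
The plan is to work with characters pulled back from the reductive quotient $U_\infty \twoheadrightarrow (\mathbb F_q^\times)^n$, so that every candidate extension becomes an $n$-tuple $(\chi_1,\dots,\chi_n)$ of characters of $\mathbb F_q^\times$. The first step will be to use Lemma \ref{describe-S} to describe the image of $S$ in $(\mathbb F_q^\times)^n$: it is contained in a torus and, because the $n$-dimensional standard representation restricts to $S$ as $n/k$ characters each with multiplicity $k$, the inclusion $S \hookrightarrow (\mathbb F_q^\times)^n$ factors as $S \hookrightarrow (\mathbb F_q^\times)^{n/k} \hookrightarrow (\mathbb F_q^\times)^n$, where the second arrow is the diagonal embedding given by a fixed partition of $\{1,\dots,n\}$ into orbits $O_1,\dots,O_{n/k}$ of size $k$. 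Extending a character $\psi$ of $S$ then becomes a two-step procedure: extend $\psi$ to a character $\eta=(\eta_1,\dots,\eta_{n/k})$ of $(\mathbb F_q^\times)^{n/k}$ (possible since $S$ is a subgroup of a finite abelian group), and then, for each orbit $O_i$, factor $\eta_i = \prod_{j\in O_i} \chi_j$.

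For part (a), I will observe that the fibre of the restriction map $\widehat{(\mathbb F_q^\times)^n} \to \widehat{S}$ over $\psi$ is a torsor over the annihilator of $S$, whose order is bounded below by $(q-1)^{n-n/k}$ since $|S|\le(q-1)^{n/k}$. For $k>1$ and $q>2$ this annihilator has order at least $q-1\ge 2$, so there are at least two extensions of $\psi$, differing by a nontrivial character; at least one of them is therefore itself nontrivial and pulls back to a nontrivial character of $U_\infty$.

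For part (b), with $\eta$ fixed, the construction of $\chi_a$ is transparent: in each orbit, place the entire character $\eta_i$ on a single coordinate $j_0\in O_i$ and set $\chi_j=1$ for $j\in O_i\setminus\{j_0\}$; since $k\ge 2$ this produces at least one trivial $\chi_j$. For $\chi_b$ the task reduces, orbit by orbit, to writing a given character $\eta_i$ of $\mathbb F_q^\times$ as a product of $k$ nontrivial characters. A short counting argument, comparing the number $(q-2)^{k-1}$ of nontrivial $(k-1)$-tuples on $O_i\setminus\{j_0\}$ against the at most $(q-2)^{k-2}$ of them that force the complementary factor $\chi_{j_0}$ to be trivial, will show such a factorization exists whenever $q>3$.

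The main obstacle I anticipate is ensuring the sharpness of the hypotheses is handled correctly; in particular, the $q>3$ bound in (b) is essential because when $\mathbb F_q^\times$ admits only a single nontrivial character (the case $q=3$) one cannot always write a prescribed character as a product of $k$ nontrivial factors. The rest of the argument is purely formal, relying only on Lemma \ref{describe-S} and elementary character theory for the finite abelian group $\mathbb F_q^\times$.
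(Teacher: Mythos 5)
Your proposal is correct and follows essentially the same route as the paper: identify the image of $S$ diagonally in $(\mathbb F_q^\times)^{n/k} \subset (\mathbb F_q^\times)^n$ via Lemma \ref{describe-S}, then extend characters coordinate by coordinate on orbits. The only cosmetic differences are that the paper handles part (a) by a short case split (if $\chi_S\neq 1$ any extension works; if $\chi_S=1$ the annihilator of $S$ is nontrivial since $k>1$) rather than your cardinality bound, and for part (b) it exhibits $\chi_b$ by an explicit greedy choice of the last two factors instead of your counting comparison $(q-2)^{k-1}>(q-2)^{k-2}$, but these are the same argument in different clothes.
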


\begin{proof} For each case, note that the subgroup $U_{\infty}$ is an Iwahori subgroup of $GL_n ( \mathbb F_q((t^{-1})))$, so its quotient by its maximal pro-$p$ subgroup is $((\mathbb F_q)^\times)^n$.  Since $S$ is a subgroup of the maximal torus of $GL_n(\mathbb F_q)$ by Lemma \ref{describe-S}, its order is prime to $p$, and so the quotient of $U_{\infty}$ by its maximal pro-$p$ subgroup is faithful. Thus $\chi_S$ is a character of the image of $S$ inside $((\mathbb F_q)^\times)^n$.

For the first case, it suffices to extend $\chi_S^{-1}$ to a nontrivial character of $(\mathbb F_q^\times)^n$ and inflate to $U_{\infty}$. We can do this unless $\chi_S$ is trivial and $S = ((\mathbb F_q)^\times)^n$, which contradicts our claim that the characters of $S$ are repeated unless $\mathbb F_q^\times$ is trivial, which implies $q=2$.

For the second case, if $q>3$, note that since the standard representation of $S$ is the sum of $n/k$ characters repeated $k$ times, the image of $S$ inside $(\mathbb F_q^\times)^n$ is contained in $(\mathbb F_q^\times)^{n/k}$, repeated $k$ times. It follows that when we extend $\chi_S^{-1}$ to a character $\chi$ of $(\mathbb F_q^\times)^n$, which can be viewed as a tuple of $n$ characters $\chi_1,\dots, \chi_n$ of $\mathbb F_q$, we can choose any tuple of characters as long as the product of $k$ characters in each of $n/k$ orbits takes some fixed value depending on the orbit. So we can certainly choose $k-1$ of the characters in each orbit to be trivial, and the last one to take the fixed value, to produce $\chi_a$. To produce $\chi_b$, we choose $k-2$ of the characters in each orbit to be an arbitrary nontrivial character. The product of the two remaining characters is then determined. Since $q>3$, there are more than two characters, and so we can choose the next character to be a character which is neither trivial nor the determined product. It follows that the last character is nontrivial, so $\chi_b$ indeed contains no trivial characters. \end{proof}

We are now ready to construct our Poincar\'e series. Since $S$ is a finite abelian group, $\lambda$ restricted to $S \subset E^\times J(\beta, \mathfrak A)$ is a sum of one-dimensional characters. Let $\chi_S$ be one of these.

\begin{prop}\label{ramifiedGLn} Assume the residue field of $F$ has more than $2$ elements. Then every supercuspidal representation of $GL(n,F)$ is ramified. Moreover, if the residue field of $F$ has more than $3$ elements, then every supercuspidal representation of $GL(n,F)$ is either ramified and pure or wildly ramified.
\end{prop}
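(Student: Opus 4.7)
The plan is to combine the Bushnell--Kutzko dichotomy from Lemma~\ref{Bushnell-Kutzko} with the Poincar\'e series machinery assembled in Lemmas~\ref{describe-S} and~\ref{chi-extension}. Case (b) is already handled by Lemma~\ref{GLn-pure}, which shows $\pi$ is pure and ramified, so both assertions of the proposition reduce to case (a). There $\pi$ is compactly induced from a representation $\Lambda$ of $U_0 = E^\times \cdot (\mathfrak{A}^\times \cap (\mathfrak{B}+\mathfrak{P}))$ with $E/F$ a nontrivial extension, and I will globalize by identifying $F$ with the completion of $K = \mathbb{F}_q(t)$ at $0$, using exactly the local compact subgroups $U_x$ constructed above.

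For case (a) I will form a Poincar\'e series $P_\varphi$ on $GL_n(\mathbb{A}_K)$ whose local data are as follows: at $0$, a matrix coefficient of $\Lambda$ whose restriction to $S$ realizes a character $\chi_S$ occurring in $\Lambda|_S$; at $\infty$, a tame character $\chi$ of the Iwahori $U_\infty$ extending $\chi_S^{-1}$ and produced by Lemma~\ref{chi-extension}; and at every other place the characteristic function of the hyperspecial maximal compact. Since $GL_n(K) \cap \prod_x U_x = S$ by Lemma~\ref{describe-S}, orthogonality of characters on $S$ forces $P_\varphi(1) \neq 0$, and the spectral expansion yields a cuspidal automorphic representation $\Pi$ whose local component at $0$ must be $\pi$ (using that supercuspidals are characterized by their compactly supported matrix coefficients), whose local component at $\infty$ contains a $\chi$-eigenvector under $U_\infty$, and which is unramified at every other place. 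For the first assertion I take $\chi$ to be nontrivial using the first clause of Lemma~\ref{chi-extension} (valid for $q>2$) and suppose, for contradiction, that $\CL^{ss}(\pi)$ is unramified. The local system $V = \CL(\Pi) \circ \mathrm{std}$ on $\mathbb{G}_m = \mathbb{P}^1 \setminus \{0,\infty\}$ then has tame semisimplification at both punctures, so $V^{ss}$ factors through $\pi_1^{\mathrm{tame}}(\mathbb{G}_{m,\mathbb{F}_q}) \cong \hat{\mathbb{Z}}(1)^{(p')} \rtimes \hat{\mathbb{Z}}$; triviality on the geometric tame inertia at $0$, which generates the full geometric tame fundamental group, forces $V^{ss}$ to be geometrically constant, whence the inertia characters at $\infty$ are all trivial, contradicting the nontriviality of $\chi$.

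For the second assertion, valid for $q>3$, I will run the construction twice with the two characters $\chi_a,\chi_b$ furnished by the second clause of Lemma~\ref{chi-extension} to produce cuspidal automorphic representations $\Pi_1, \Pi_2$ sharing the local component $\pi$ at $0$ but carrying distinct Iwahori characters at $\infty$. By the first assertion, it suffices to rule out tame non-wild ramification of $\pi$. If $\pi$ were tamely ramified, both $\CL(\Pi_i)$ would factor through $\pi_1^{\mathrm{tame}}(\mathbb{G}_{m,\mathbb{F}_q})$, in which the geometric tame inertias at $0$ and $\infty$ generate the same cyclic subgroup up to inversion; the semisimplified inertia at $\infty$ would then be determined by that at $0$, which is common to $\Pi_1$ and $\Pi_2$. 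Yet these two inertias at $\infty$ correspond to the multisets of characters encoded by $\chi_a$ and $\chi_b$, which differ in whether they contain the trivial character. This contradiction forces $\pi$ to be wildly ramified in case (a), completing the proof. The delicate point I anticipate is the spectral extraction of a $\Pi$ with $\Pi_0 \cong \pi$ rather than merely some representation sharing a local Bernstein datum; this should follow from orthogonality of matrix coefficients of distinct supercuspidal representations combined with the fact that $\varphi_0$ is built from a specific matrix coefficient of $\pi$ with support in $U_0$.
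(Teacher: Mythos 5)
Your proposal is correct and follows essentially the same path as the paper's own proof: the Bushnell--Kutzko dichotomy, Lemma~\ref{GLn-pure} for case (b), then a Poincar\'e series built from a matrix coefficient of $\Lambda$ at $0$ and the extending character $\chi$ (resp.\ $\chi_a, \chi_b$) at $\infty$, concluding via the coincidence of the tame inertia subgroups at $0$ and $\infty$ inside $\pi_1^{\mathrm{tame}}(\mathbb{G}_m)$. The only cosmetic difference is your framing via ``$V^{ss}$ geometrically constant''; the paper argues directly that the monodromy $\rho(\gamma_0)$ is unipotent while $\rho(\gamma_\infty)=\rho(\gamma_0)^{-1}$ cannot be, which is slightly cleaner but amounts to the same thing.
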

\begin{proof} Let $F = \mathbb F_q((t))$. Fix a supercuspidal rerpesentation $\pi$. We apply Lemma \ref{Bushnell-Kutzko}. In case (b), $\pi$ is pure and ramified by Lemma \ref{GLn-pure}, so we may reduce to case (a), where we will prove that $\pi$ is ramified if $q>2$ and wildly ramified if $q>3$.

To do this, we construct a Poincar\'e series, using all the notation above. Since $S$ is a finite abelian group, $\lambda$ restricted to $S \subset E^\times J(\beta, \mathfrak A)$ is a sum of one-dimensional characters. Let $\chi_S$ be one of these. We extend $\chi_S^{-1}$ to a nontrivial character $\chi$ of $U_\infty$ using Lemma \ref{chi-extension}.  We take the representation $\lambda \otimes \chi$ of $\prod_x U_x$, which is trivial by construction on $S =GL_n(F) \cap \prod_x U_x$. Using a matrix coefficient of this representation, we can construct a Poincar\'e series. After decomposing into Hecke eigenforms, this series generates an automorphic representation $\Pi$ which is unramified outside of $0$ and $\infty$, of type $\operatorname{Ind} (\lambda) = \pi$ at $0$, and whose local factor at $\infty$ contains a vector that transforms according to $\chi$ under $U_{\infty}$. By Lafforgue, associated to this representation is a global parameter $\sigma_\Pi$ which matches $\sigma_\pi$ at $0$, and is unramified away form $0$ and $\infty$.

Furthermore, at $\infty$, the local factor of $\Pi$ is contained in a parabolic induction from the Borel of a tamely ramified character of the maximal torus, which, restricted to the $\mathbb F_q$-points of the maximal torus, is $\chi$. Thus, by the compatibility of Genestier-Lafforgue with parabolic induction and the local Langlands correspondence for $GL_1$, the Genestier-Lafforgue parameter at $\infty$ is a sum of $n$ tamely ramified characters whose restriction to the inertia group is trivial if and only if the restriction of $\chi$ to the corresponding copy of $ \mathbb F_q^\times$ is trivial. Thus the restriction of the global parameter $\sigma_\Pi$ to $\infty$ is tamely ramified, and is unipotent if and only if $\chi$ is trivial.

If $\pi$ is unramified, then $\sigma_\Pi$ is unramified away from $0$ and $\infty$ and has at worst unipotent ramification at $0$. Because $\sigma_\Pi$ is unramified away from $0$ and $\infty$ and tamely ramified at $0$ and $\infty$, the restrictions of $\sigma_\Pi$ to the inertia groups at $0$ and $\infty$ are isomorphic, which contradicts the fact that $\sigma_\Pi$ is unipotent and $0$ and $\sigma_\Pi$ is non-unipotent at $\infty$, it is non-unipotent at $0$. So in fact $\sigma_\pi$ is ramified.

If $q>3$ and $\pi$ is tamely ramified, then we apply the second case of Lemma \ref{chi-extension} to produce two characters, $\chi_a$ and $\chi_b$, and follow the same procedure above to produce two globalizations $\Pi_a, \Pi_b$. Again $\sigma_{\Pi_a},\sigma_{\Pi_b}$ are tame at $0$ and $\infty$, so the restrictions of $\sigma_{\Pi_a}$ to the inertia subgroup at $\infty$ is isomorphic to its restriction to the inertia subgroup at $0$ and thus isomorphic, up to semisimplification, to $\sigma_\pi$. The same is true for $\sigma_{\Pi_b}$, so $\sigma_{\Pi_a}$ and $\sigma_{\Pi_b}$ are isomorphic up to semisimplification. However, the semisimplification of $\sigma_{\Pi_a}$ is a sum of one-dimensional characters, at least one trivial, whereas the semisimplification of $\sigma_{\Pi_b}$ is a sum of one-dimesional characters, all nontrivial, so they cannot be isomorphic. This is a contradiction, hence $\pi$ is wildly ramified in this case.\end{proof}

\begin{cor}\label{incorrGLn}  Let $\pi$ be a supercuspidal representation of $GL(n,F)$.  Then there is a sequence of cyclic extensions as in \eqref{cyclics}
such that $BC_{F_r/F}(\pi)$ is an irreducible constituent of an unramified principal series representation.
\end{cor}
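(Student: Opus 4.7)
The plan is to apply tempered cyclic base change for $GL(n)$---available and single-valued by Henniart-Lemaire \cite{HeLe}---iteratively along a solvable tower of cyclic prime-order extensions of $F$ chosen to trivialize the restriction of $\CL^{ss}(\pi)$ to inertia, and then to invoke Proposition \ref{ramifiedGLn} to conclude that the cuspidal support of the resulting representation consists entirely of characters of $GL(1)$.

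First I would verify that the inertia image $H := \CL^{ss}(\pi)(I_F)$ is finite and solvable. Finiteness follows from Grothendieck's monodromy theorem, which gives quasi-unipotency of $\CL^{ss}(\pi)|_{I_F}$ on an open subgroup, combined with semisimplicity of the Genestier-Lafforgue parameter: a semisimple element that is also unipotent is trivial, so the open subgroup maps to $1$ and the inertia image is finite. Solvability is automatic: the wild image $\CL^{ss}(\pi)(P_F)$ is a finite $p$-group (as $\ell \neq p$), and the tame quotient $H / \CL^{ss}(\pi)(P_F)$ is a finite image of the pro-cyclic tame inertia, hence cyclic. If $|k|=2$, I would first prepend a single unramified cyclic base change of prime degree $2$, so that henceforth $|k_F|>2$ and Proposition \ref{ramifiedGLn} can be applied downstream.

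Next, I would choose any finite Galois extension $F'/F$ inside $F^{sep}$ with $\CL^{ss}(\pi)(I_{F'}) = \{1\}$; such an $F'$ exists because $H$ is finite. The group $Gal(F'/F)$ is an extension of a finite cyclic quotient of $\hat{\ZZ}$ by a quotient of $H$, hence solvable, and therefore admits a subnormal series with cyclic prime-order quotients. Via Galois correspondence this translates into a tower $F = F_0 \subset F_1 \subset \cdots \subset F_r = F'$ with each $F_i/F_{i-1}$ cyclic of prime order, as in \eqref{cyclics}. Applying tempered base change step by step, using single-valuedness at each stage, I obtain $\pi_r := BC_{F_r/F}(\pi)$ with $\CL^{ss}(\pi_r) = \CL^{ss}(\pi)|_{W_{F_r}}$ unramified by construction.

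Finally, I would write $\pi_r$ as a constituent of $\operatorname{Ind}_P^{GL(n,F_r)}(\tau_1 \boxtimes \cdots \boxtimes \tau_k)$ with each $\tau_j$ supercuspidal on $GL(n_j, F_r)$. Parabolic compatibility of the Genestier-Lafforgue parametrization (Theorem \ref{parab}(iv)) then gives each $\CL^{ss}(\tau_j)$ unramified. But Proposition \ref{ramifiedGLn}, applied with $|k_{F_r}| \geq |k_{F_1}| > 2$, asserts that every supercuspidal of $GL(n_j, F_r)$ with $n_j > 1$ has ramified parameter; hence $n_j = 1$ for every $j$, so $\pi_r$ is an irreducible constituent of an unramified principal series, as required. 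The principal technical ingredient is the solvable tower construction in the third paragraph, which rests on the finiteness and solvability of the inertia image; everything else is a direct assembly of Henniart-Lemaire base change, parabolic compatibility, and Proposition \ref{ramifiedGLn}.
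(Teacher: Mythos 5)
Your proof follows essentially the same route as the paper's (finiteness of the inertia image of $\CL^{ss}(\pi)$, a solvable tower of cyclic prime-order extensions trivializing the inertia action, Henniart--Lemaire base change down that tower, and then parabolic compatibility together with Proposition \ref{ramifiedGLn} to force the cuspidal support onto the maximal torus), but you spell it out more carefully than the paper's terse sketch; in particular your preliminary unramified base change when $|k|=2$ supplies the residue-field hypothesis that the paper leaves implicit. One small slip in your third paragraph: the inertia subgroup of $\mathrm{Gal}(F'/F)$ is not a quotient of $H=\CL^{ss}(\pi)(I_F)$ -- rather $H$ is a quotient of it -- but solvability of $\mathrm{Gal}(F'/F)$ holds anyway, since any finite Galois group of a local field extension is solvable (wild inertia is a $p$-group, tame inertia and the residue extension are cyclic).
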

\begin{proof}  By \ref{sph} and induction we know that we can find a sequence of cyclic extensions as in \eqref{cyclics} such that $BC_{F_r/F}(\pi)$ is a irreducible
constituent of an principal series representation whose parameter is unramified.  By Proposition \ref{ramifiedGLn}, such a principal series must be a parabolic induction from the Borel, as a supercuspidal on any Levi except the maximal torus would have a ramified Langlands parameter. By the local Langlands conjecture for $GL_1$, it must be an induction of an unramified representation of the maximal torus, i.e. an unramified principal series.
\end{proof}

Corollary \ref{incorrGLn} is  the key point in (almost) every proof of the local Langlands correspondence for $GL(n)$.  It was already mentioned in the introduction that in \cite{LRS93,HT01,He00}, this result
is obtained as a consequence of Henniart's numerical correspondence; whereas in \cite{Sch13}  it is proved by a geometric argument involving nearby cycles in the local   
model.  Starting with Corollary \ref{incorrGLn} one obtains the full local correspondence by an inductive study of the fibers of local base change.

Here we obtain Corollary \ref{incorrGLn} by a completely different argument, based on the geometric considerations of \cite{Laf18}, the global base change
of \cite{HeLe}, and the existence of types for supercuspidal representations \cite{BK}.  It is questionable whether this argument is genuinely new, because the existence of the global parametrization in \cite{Laf18}, combined
with the converse theorem as in \cite{Laf02}, already suffices to prove the local as well as the global correspondence.  However, the reasoning used here can be applied to
any $G$ for which global base change and the existence of types is known.

\section{Applications to groups over $p$-adic fields}\label{DKFS}
In this final section, we speculate on how our results in positive characteristic could have impact on analogous questions in characteristic zero local fields, via the principle of close local fields.
\vskip 5pt

Let $F = k((t))$ be a local field of characteristic $p$ as before, and let  $F^\sharp$ be a $p$-adic field.   We write $\CO_F, \CO_{F^\sharp}$ for their integer rings,
and $m_F$, $m_{F^\sharp}$ for the corresponding maximal ideals.  
\begin{defn}\label{nclose}  Let $n$ be a positive integer.  We say that $F$ and $F^\sharp$ are {\bf $n$-close} if
$$\CO_F/m_F^n \isoarrow \CO_{F^\sharp}/m_{F^\sharp}^n$$
as rings.
\end{defn}

We write $F^?$ for either $F$ or $F^\sharp$.  Let $G$ be a connected reductive group over $F^?$.  The {\it depth} of a parameter $\phi \in \CG^{ss}(G,F)$ is defined to be the maximum $n$
such that $\phi$ is trivial on the subgroup $I_{F^?}^n$ of the inertia group $I_{F^?}$, where we are using the upper numbering.
Let $\CG^{ss,n}(G,F) \subset \CG^{ss}(G,F)$ denote the subset of parameters of depth at most $n$.

\begin{thm}[Deligne]\label{deln}  If $F$ and $F^\sharp$ are $n$-close, then there is a natural bijection
\begin{equation}\label{delnn}
\CG^{ss,n}(G,F) \simeq \CG^{ss,n}(G,F^\sharp).
\end{equation}
\end{thm}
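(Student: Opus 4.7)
The plan is to invoke Deligne's classical theorem on close local fields, which provides, for any $n$-close pair $(F, F^\sharp)$ together with a fixed ring isomorphism $\CO_F/m_F^n \isoarrow \CO_{F^\sharp}/m_{F^\sharp}^n$, a canonical isomorphism of topological groups
\[
W_F / I_F^n \isoarrow W_{F^\sharp} / I_{F^\sharp}^n,
\]
where $I^n$ denotes the $n$-th upper-numbering ramification subgroup. Deligne obtains this by establishing an equivalence of categories between the finite separable extensions of $F$ whose Galois closures kill $I_F^n$ and the corresponding category over $F^\sharp$; the ring-theoretic $n$-closeness is precisely the data needed to match the Eisenstein equations defining these extensions.

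Granting this input, the theorem is essentially formal. First I would observe that, by the very definition of $\CG^{ss,n}(G,F^?)$, any parameter $\phi \colon W_{F^?} \to {}^LG(C)$ of depth $\leq n$ is trivial on $I_{F^?}^n$ and therefore factors through the quotient $W_{F^?}/I_{F^?}^n$. I would then transport $\phi$ across Deligne's isomorphism to obtain a continuous homomorphism on the other side. Since this transported map has literally the same image subgroup in ${}^LG(C)$ as $\phi$, the semisimplicity condition (which depends only on whether the image is contained in a Levi of any parabolic that contains it) is preserved automatically. The inverse bijection is constructed symmetrically, so we obtain the bijection \ref{delnn}.

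The subtlety I expect to require the most care is compatibility with the $L$-group structure on the two sides. The implicit content of the statement is that the reductive group $G$ is given on both sides in a compatible way, and the action of $W_{F^?}$ on $\hat G$ entering the definition of ${}^LG = \hat G \rtimes W_{F^?}$ must also be transported. This action factors through the Galois group of the splitting field of $G$, so provided we take $n$ larger than the depth of ramification of that splitting field, Deligne's isomorphism will automatically identify the two Weil-group actions on $\hat G$ and hence the two $L$-group structures. In practice one would fix a pinning of $G$ over $\ZZ$, lift it to both $F$ and $F^\sharp$ via a common unramified base, and verify that the identification of splitting fields coming from $n$-closeness is compatible with these choices. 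This is the main obstacle in making the bijection truly canonical rather than merely set-theoretic, and it is what forces the hypothesis that $n$ be taken sufficiently large relative to the group $G$ itself.
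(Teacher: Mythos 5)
The paper offers no proof of this statement: it is attributed to Deligne and cited as known, and the subsequent discussion (the Ganapathy--Kazhdan theorem and Conjecture \ref{commute}) simply takes it as input. So there is no argument in the paper to compare your proposal against. That said, your reconstruction of Deligne's argument is essentially correct and follows the standard route: Deligne's theorem on close local fields (from \emph{Les corps locaux de caract\'eristique $p$, limites de corps locaux de caract\'eristique $0$}) supplies the isomorphism of truncated Weil groups $W_F/I_F^n \isoarrow W_{F^\sharp}/I_{F^\sharp}^n$ once a ring isomorphism $\CO_F/m_F^n \isoarrow \CO_{F^\sharp}/m_{F^\sharp}^n$ is fixed, and the bijection \eqref{delnn} is obtained by noting that a semisimple parameter of depth $\leq n$ factors through this quotient and can be transported, with semisimplicity preserved because it is a condition on the image subgroup alone.

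The one substantive issue you raise --- the compatibility of the two $L$-group structures under transport --- is genuine and well identified. It is exactly the point where the bijection depends on more than just the ring-theoretic data, since one needs the identification of truncated Weil groups to respect the action on $\hat G$, which forces $n$ to be large relative to the ramification of the splitting field of $G$. The paper quietly sidesteps this: immediately after the statement it assumes $G$ split, so that ${}^LG = \hat G \times W_{F^?}$ with trivial action and the transport is automatic. If you want to state the theorem in the generality the paper nominally allows (arbitrary connected reductive $G$), you should make explicit what data is fixed to identify $G/F$ with $G/F^\sharp$ --- for instance, a common $\CO_F/m_F^n$-form of the quasi-split inner form together with an inner twisting datum --- since without this the left and right sides of \eqref{delnn} are not obviously talking about parameters into isomorphic $L$-groups. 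One very minor phrasing point: the paper's definition of depth is written as ``the maximum $n$ such that $\phi$ is trivial on $I^n_{F^?}$,'' which as stated reads oddly because the filtration is decreasing; what is meant is the usual convention (infimum of $r$ with $\phi(I^{r+}_{F^?})$ trivial), and your step ``parameters of depth $\leq n$ factor through $W_{F^?}/I^n_{F^?}$'' is the intended reading.
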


Assume now that $G$ is split, and let $I_n(F^?) \subset G(\CO_{F^?})$ denote the $n$-th Iwahori filtration subgroup, as defined in \cite[\S 3]{G}; thus $I = I_0$ is the usual Iwahori
subgroup, and $I_n$ is the kernel of the reduction map $\mathbf{I}(\CO_{F^?}) \ra \mathbf{I}(\CO_{F^?}/m_{F^?}^n).$.        
Let $H(G(F^?),I_n)$ denote the Hecke algebra of $I_n(F^?)$-biinvariant functions on $G(F^?)$ with coefficients in the algebraically closed field $C$.  
Let $\CA^n(G,F^?) \subset \CA(G,F)$ denote the subset of equivalence classes of irreducible admissible representations generated by vectors fixed under $I_n(F^?)$.
Any $\pi \in \CA^n(G,F^?)$ is then determined up to isomorphism by the representation of $H(G(F^?),I_n)$ on its invariant subspace $\pi^{I_n(F^?)}$.
The following is Ganapathy's refinement of a theorem of Kazhdan:

\begin{thm}[Ganapathy-Kazhdan, \cite{G}]\label{kazg}  If $F^\sharp$ and $F$ are $n$-close, then there is a natural isomorphism
\begin{equation}\label{hn} 
H(G(F),I_n) \isoarrow H(G(F^\sharp),I_n)
\end{equation}
of finitely-generated $C$-algebras.  Moreover, there is a bijection
\begin{equation}\label{gak} \CA^{n}(G,F) \simeq \CA^{n}(G,F^\sharp)
\end{equation}
with the property that, if $\pi \in \CA^n(G,F)$ corresponds to $\pi^\sharp \in \CA^n(G,F^\sharp)$, then the 
invariant subspaces $\pi^{I_n(F)}$ and $\pi^{\sharp,I_n(F^\sharp)}$ are isomorphic as modules with respect to the isomorphism
\eqref{hn}.
\end{thm}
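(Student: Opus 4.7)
The plan is to combine a structural comparison of Iwahori-level data for $G(F)$ and $G(F^\sharp)$ with a standard module-category equivalence. First I would fix a Chevalley model $\mathbf{G}$ over $\mathbb Z$ for the split group $G$, together with a Borel $\mathbf{B}$. Then for any local field $F^?$ one has the standard Iwahori $\mathbf I(\CO_{F^?}) \subset \mathbf{G}(\CO_{F^?})$ coming from reduction of $\mathbf B$ modulo the maximal ideal, and $I_n(F^?)$ is the kernel of $\mathbf I(\CO_{F^?}) \to \mathbf I(\CO_{F^?}/m_{F^?}^n)$. The hypothesis that $F$ and $F^\sharp$ are $n$-close yields a canonical ring isomorphism $\CO_F/m_F^n \isoarrow \CO_{F^\sharp}/m_{F^\sharp}^n$, hence a group isomorphism $\mathbf I(\CO_F/m_F^n) \isoarrow \mathbf I(\CO_{F^\sharp}/m_{F^\sharp}^n)$, and more generally a matching on all truncated pieces of the Iwahori-level data.

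The central step is the construction of the algebra isomorphism \eqref{hn}. Its natural basis is the collection of characteristic functions $\{1_D\}$ of double cosets $D \in I_n(F^?)\backslash G(F^?)/I_n(F^?)$. Using the Iwahori--Matsumoto decomposition $G(F^?) = \bigsqcup_{w \in \tilde W} I(F^?) w I(F^?)$, where $\tilde W$ is the extended affine Weyl group (intrinsic to the root datum and hence independent of $F^?$), each $I w I$ is a disjoint union of $I_n$-double cosets indexed by coset data in $\mathbf I(\CO_{F^?}/m_{F^?}^n)$, so the $n$-closeness gives a canonical bijection of bases. The subtle point, and the main obstacle, is to verify that convolution structure constants match up under this bijection. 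Each structure constant is essentially a count of solutions in truncated coset spaces to a multiplicative equation in $G(F^?)$; the bounded support of the relevant double cosets means the count only sees the group law up to some finite level, and Ganapathy's refinement of Kazhdan's original argument shows that for double cosets involving $I_n$-invariants the needed group-theoretic data is captured precisely by $\CO_{F^?}/m_{F^?}^n$. One may need to correct by a uniform Haar-measure normalization, as in Kazhdan's analysis, so that the structure constants literally agree rather than agreeing up to a scalar.

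With \eqref{hn} in hand, the bijection \eqref{gak} and the module compatibility follow formally from Borel--Casselman--Bernstein theory. The functor $\pi \mapsto \pi^{I_n(F^?)}$ is an equivalence from the full subcategory of smooth $G(F^?)$-representations generated by their $I_n(F^?)$-fixed vectors onto the category of nondegenerate left $H(G(F^?),I_n)$-modules, matching irreducible objects with simple modules. Transporting across \eqref{hn} therefore produces the required bijection $\CA^n(G,F) \simeq \CA^n(G,F^\sharp)$, and the compatibility on invariant subspaces is tautological from the construction, since $\pi^{I_n(F)}$ and $\pi^{\sharp,I_n(F^\sharp)}$ are the values of these equivalences on corresponding objects. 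Thus the genuine technical content is concentrated in the structure-constant verification; the representation-theoretic output is formal once that step is carried out.
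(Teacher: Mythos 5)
The paper does not prove this statement; it cites it to Ganapathy's work \cite{G}, which refines Kazhdan's theorem on Hecke algebras over close local fields, so there is no internal argument for you to compare against. Your sketch captures the general shape of the external proof correctly: match the bases of double-coset characteristic functions through the truncated ring $\CO_{F^?}/m_{F^?}^n$, show that the convolution structure constants agree, and then pass to representations by the Borel--Bernstein equivalence $\pi \mapsto \pi^{I_n(F^?)}$ between the $I_n$-generated block and nondegenerate $H(G(F^?),I_n)$-modules, from which the bijection \eqref{gak} and the module compatibility follow formally. You are also right that the representation-theoretic output is essentially automatic once \eqref{hn} is in place.

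The difficulty is that your central step is circular: you defer the verification that the structure constants are controlled exactly at level $n$ to ``Ganapathy's refinement,'' but that is precisely the content of the theorem. The reason this is not already in Kazhdan is that his original argument only yields the algebra isomorphism when $F$ and $F^\sharp$ are $m$-close for some $m$ depending on $n$ and a priori much larger; tightening this to $m=n$ for the Iwahori filtration subgroups is the refinement, and it requires quantitative control on how deeply the convolution integrals probe the group law. In particular, your assertion that each Iwahori--Matsumoto piece $IwI$ decomposes into $I_n$-double cosets indexed purely by data in $\mathbf I(\CO_{F^?}/m_{F^?}^n)$ silently uses that intersections such as $I \cap wI_n w^{-1}$ (and the resulting coset counts and Haar-measure normalizations) are matched by the $n$-closeness isomorphism, which is exactly what has to be established rather than assumed. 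So your outline correctly locates where the work lies but does not carry it out.
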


In view of these results, it would be unnatural not to make the following conjecture:
\begin{conj}\label{commute}  For any positive integer $n$ the following diagram commutes:
\begin{equation*}
\begin{CD}   \CA^n(G/F)  @>Genestier-Lafforgue>>  \CG^{ss,n}(G/F) \\
 @V \eqref{gak} VV    @VV \eqref{delnn} V \\
\CA^n(G/F^\sharp)  @>>Fargues-Scholze>  \CG^{ss,n}(G/F^\sharp)
 \end{CD}
\end{equation*}
\end{conj}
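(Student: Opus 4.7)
The plan is to prove the conjecture by showing that both parametrizations are determined by the same excursion data, which can be transported across $n$-close fields. Recall that the construction of Genestier-Lafforgue associates to $\pi \in \CA(G/F)$ the semisimple parameter $\CL^{ss}(\pi)$ via the action of excursion operators $S_{I,f,(\gamma_i)_{i \in I}}$, indexed by a finite set $I$, an invariant function $f$ on $\hG^I$, and a tuple $(\gamma_i) \in W_F^I$; these act as endomorphisms of the identity functor on the category of smooth $C$-representations of $G(F)$, and the parameter is recovered from the system of their traces on $\pi$. Scholze--Fargues' construction on the $p$-adic side has precisely the same formal structure, producing a compatible system of excursion operators acting on the category of smooth $G(F^\sharp)$-representations. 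Thus both $\CL^{ss}$ and the Fargues-Scholze map land in (and are determined by) an appropriate quotient of the Bernstein center, realized through the action of these excursion operators.

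The first step of the proof is to verify that when all $\gamma_i$ have depth $\leq n$, the corresponding excursion operator preserves the subcategory $\CA^n(G/F^?)$ of representations generated by $I_n(F^?)$-fixed vectors, and moreover restricts to an element of the finite center of the finitely generated Hecke algebra $H(G(F^?), I_n)$. Granting this, the second step is to show that, under the Ganapathy-Kazhdan isomorphism \eqref{hn} combined with Deligne's bijection \eqref{delnn} of depth $\leq n$ Weil parameters, these depth $\leq n$ excursion operators on $F$ and $F^\sharp$ are matched. Once such a matching is in place, the commutativity of the diagram follows directly: the parameters on both sides are extracted as the same system of trace functions of matching operators acting on the matching modules $\pi^{I_n(F)} \simeq \pi^{\sharp, I_n(F^\sharp)}$.

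The main obstacle is step two, the compatibility of excursion operators under close local fields. Both constructions are geometric --- Lafforgue's via moduli of restricted shtukas with fixed level structure, and Fargues-Scholze's via the stack $\mathrm{Bun}_G$ on the Fargues-Fontaine curve --- and no direct comparison of these geometric objects for $n$-close $F$ and $F^\sharp$ is presently available. One natural route would be to develop a truncated tilting equivalence identifying the relevant level-$n$ geometric input on both sides, so that the excursion correspondences themselves are identified before taking cohomology. A more accessible but weaker strategy would be to first prove compatibility for tame excursion data (depth $0$, where the Iwahori-Hecke algebra and its Bernstein presentation make the identification tractable), and then attempt induction on the depth using Moy-Prasad filtrations and the filtered isomorphism \eqref{hn}.

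As a sanity check, one could test the conjecture in settings where both sides are already understood: for $G = \mathrm{GL}(n)$ the Genestier-Lafforgue and Fargues-Scholze parametrizations agree with the local Langlands correspondences in their respective characteristics, and the classical Deligne-Kazhdan theorem already asserts the compatibility of local Langlands with close local fields, giving the commutativity in this case. A positive verification for quasi-split classical groups, using the results of Ganapathy--Varma combined with the Fargues-Scholze compatibilities established by Hansen--Kaletha--Weinstein and others, would provide additional evidence and potentially suggest the shape of the general argument.
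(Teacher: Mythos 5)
The statement you are addressing is stated in the paper as a \emph{conjecture}, not a theorem: the paper offers no proof of it, and explicitly frames it as a natural expectation given the Deligne and Ganapathy--Kazhdan compatibilities. So there is no ``paper's own proof'' to compare your argument against.

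Your proposal correctly identifies the shape a proof would have to take --- both parametrizations arise from excursion operators acting on the Bernstein center, so one would want to (a) show depth-$\le n$ excursion operators restrict to the center of $H(G(F^?),I_n)$ and (b) match them across the Ganapathy--Kazhdan isomorphism \eqref{hn} and Deligne's bijection \eqref{delnn}. This is the right framing. But as you yourself flag, step (b) is precisely where a genuine new idea is required and none is currently available: there is no known comparison between the geometry of Lafforgue's restricted shtukas over $F$ and the Fargues--Fontaine / $\mathrm{Bun}_G$ geometry over $F^\sharp$ for $n$-close fields, and the Ganapathy--Kazhdan isomorphism is a Hecke-algebra isomorphism, not a geometric one from which excursion correspondences could be transported. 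Your ``truncated tilting equivalence'' suggestion names the missing bridge but does not construct it; your depth-$0$-then-induct strategy runs into the same wall, since even the tame case of this compatibility is not established in the literature. The ${\rm GL}(n)$ and classical-group sanity checks are evidence for the conjecture, not steps toward a general proof.

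In short: your plan is a reasonable research program, and it is consistent with the reason the paper leaves this as a conjecture, but it is not a proof --- the central compatibility of excursion operators under the close-fields dictionary remains an open problem, and without it the diagram is not shown to commute.
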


Here the top line is the parametrization of Genestier and Lafforgue (Theorem \ref{paramVL} (iii)), while the bottom line is the parametrization
defined by Fargues and Scholze in \cite{FS} for groups over $p$-adic fields.  

The following corollary is an immediate consequence of the Conjecture and the results of the previous sections.

\begin{cor}\label{padic}  Let $F^\sharp$ be a finite extension of $\Qp$ with residue field $k$ of characteristic $p$.  Let $n \geq 0$ and assume there is a local field $F$ of  characteristic $p$ such that $F^\sharp$ and $F$ are $n$-close.  Let $G$ be a split semisimple group over $F$.  Suppose $|k| > 5$ and $p$ does not divide the order of the Weyl group of $G$.
Finally, suppose Conjecture \ref{commute} holds for the $p$-adic field $F^\sharp$.  

Let $\sigma$ be a supercuspidal representation of $G(F^\sharp)$.  Suppose the semisimple Fargues-Scholze parameter $\CL^{ss}(\sigma)$ attached to $\sigma$ is pure.  Then 
$\CL^{ss}(\sigma)$ is ramified.    

More generally, let $\sigma$ be any pure irreducible representation of $G(F^\sharp)$ with unramified Fargues-Scholze parameter.  Then $\sigma$ is an irreducible constituent of an
unramified principal series representation.
\end{cor}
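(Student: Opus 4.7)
The plan is to transport $\sigma$ to a representation over the $n$-close equal-characteristic field $F$ and apply Corollary \ref{sph} there. First, since $\sigma$ is irreducible and admissible, it has nonzero $I_n(F^\sharp)$-fixed vectors for $n$ sufficiently large, so it belongs to $\CA^n(G, F^\sharp)$. Because an $n$-close local field of equal characteristic exists for every $n$, we may freely enlarge the $n$ in the hypothesis. Applying Theorem \ref{kazg} then yields $\sigma' \in \CA^n(G, F)$ whose Hecke module $\sigma'^{I_n(F)}$ is identified with $\sigma^{I_n(F^\sharp)}$ via the isomorphism \eqref{hn}.

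Next I would invoke the assumed Conjecture \ref{commute}: the Genestier-Lafforgue parameter $\CL^{ss}(\sigma')$ corresponds to the Fargues-Scholze parameter $\CL^{ss}(\sigma)$ under Deligne's bijection \eqref{delnn}. Since \eqref{delnn} is compatible with the inertia filtrations and matches Frobenius conjugacy classes on both sides, both the unramified-ness of $\CL^{ss}(\sigma)$ and its purity transfer to $\CL^{ss}(\sigma')$. The hypotheses $|k|>5$ and $p \nmid |W|$ are in force (the latter ensuring, via Fintzen, that every supercuspidal of every Levi of $G(F)$ contains an $\mathfrak{s}$-type), so Corollary \ref{sph} applies to $\sigma'$ and concludes that $\sigma'$ is an irreducible constituent of an unramified principal series of $G(F)$. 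For split $G$, this is equivalent to $\sigma'$ having a nonzero $I_0(F)$-fixed vector by Borel's theorem.

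The final step is to carry the Iwahori-fixed-vector condition back to $\sigma$. The inclusion $\pi^{I_0(F^?)} \subset \pi^{I_n(F^?)}$ is implemented by the averaging idempotent
\[ e_{F^?} \;=\; \frac{1}{[I_0(F^?):I_n(F^?)]}\sum_{g\in I_0(F^?)/I_n(F^?)} \mathbf{1}_{g\, I_n(F^?)} \;\in\; H(G(F^?), I_n), \]
so the conclusion will follow once one verifies that \eqref{hn} sends $e_F$ to $e_{F^\sharp}$. This compatibility is the main technical point of the argument: both idempotents are supported on $G(\CO_{F^?})$ and depend only on the finite group scheme $G(\CO_{F^?}/m_{F^?}^n)$, which is identified on both sides by $n$-closeness, so the matching should be essentially tautological from an inspection of Ganapathy's construction. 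Granting this, $\sigma^{I_0(F^\sharp)} \neq 0$, and hence $\sigma$ is an irreducible constituent of an unramified principal series of $G(F^\sharp)$, as required.
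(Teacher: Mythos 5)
The structure of your argument — transfer $\sigma$ to an $n$-close equal-characteristic field $F$ via Theorem \ref{kazg}, apply Conjecture \ref{commute} and Theorem \ref{deln} to match parameters, invoke Corollary \ref{sph}, and carry the Iwahori-spherical condition back via the idempotent in $H(G(F^?),I_n)$ — is the right one and matches the paper's intent (the paper treats this as immediate and gives no further proof). However, there is one genuinely false step: your claim that ``an $n$-close local field of equal characteristic exists for every $n$, [so] we may freely enlarge the $n$ in the hypothesis.'' For a fixed $p$-adic field $F^\sharp$ with absolute ramification index $e$ and residue field $k$, the ring $\CO_{F^\sharp}/m_{F^\sharp}^n$ is an $\FF_p$-algebra only when $n \le e$ (since $p$ has valuation $e$, so $p \equiv 0 \bmod m^n$ precisely when $n \le e$), whereas $\CO_F/m_F^n \cong k[t]/(t^n)$ is always an $\FF_p$-algebra. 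Thus $F^\sharp$ admits an $n$-close equal-characteristic field only for $n \le e$; you cannot take $n$ arbitrarily large. This is exactly why, in the paragraph following Corollary \ref{padiczero}, the authors say one must instead replace $F^\sharp$ by a suitably ramified Galois extension $F'$ (and then descend by base change) to reach higher-depth representations.

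What this means for your proof is that the hypothesis ``$n \ge 0$ and $F$, $F^\sharp$ are $n$-close'' should not be read as freely adjustable; rather, the corollary implicitly applies only to $\sigma \in \CA^n(G,F^\sharp)$ for the given $n$, i.e.\ $\sigma$ of depth controlled by $n$ (Corollary \ref{padiczero} makes this explicit for depth zero with $n=0$). Once you drop the false enlargement step and add the hypothesis $\sigma \in \CA^n(G,F^\sharp)$, the rest of your argument goes through: purity and unramifiedness of $\CL^{ss}(\sigma)$ transfer to $\CL^{ss}(\sigma')$ through Deligne's bijection \eqref{delnn} because it respects the inertia filtration, Corollary \ref{sph} applies since $p \nmid |W|$ gives $\mathfrak s$-types on all Levis, and the compatibility of \eqref{hn} with the averaging idempotent for $I_0/I_n$ (which, as you note, should come out of Ganapathy's construction via the identification of $G(\CO_{F}/m_F^n)$ with $G(\CO_{F^\sharp}/m_{F^\sharp}^n)$) lets you descend the Iwahori-spherical property from $\sigma'$ back to $\sigma$. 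The first assertion about supercuspidal $\sigma$ then follows, as you observe, since a supercuspidal cannot be a constituent of a proper parabolic induction.
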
 

Since every $p$-adic field has a local field $F$ of characteristic $p$ that is $0$-close to $F^\sharp$, we have the following special case of Corollary \ref{padic}

\begin{cor}\label{padiczero}  Let $F^\sharp$ be a finite extension of $\Qp$ with residue field $k$ of characteristic $p$.  Let $G$ be a split semisimple group over $F$.  Suppose $|k| > 5$ and $p$ does not divide the order of the Weyl group of $G$.  Finally, suppose Conjecture \ref{commute} holds for the $p$-adic field $F^\sharp$.  Then any pure depth zero supercuspidal representation of $G(F^\sharp)$ has ramified Fargues-Scholze parameter.
\end{cor}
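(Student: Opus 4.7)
The plan is to reduce directly to Corollary \ref{padic} via the method of close local fields. Given $F^\sharp$ with residue field $k$, I would take $F = k(\!(t)\!)$; since $F$ and $F^\sharp$ share this residue field, they are $1$-close in the sense of Definition \ref{nclose}. A depth zero representation of $G(F^\sharp)$ is generated by its vectors invariant under the pro-$p$ radical of some parahoric, so after choosing the parahoric to be an Iwahori we have $\sigma \in \CA^1(G/F^\sharp)$, placing us within the regime of Theorem \ref{kazg}.

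First I would use the Ganapathy--Kazhdan bijection \eqref{gak} at level $n=1$ to produce $\sigma^\flat \in \CA^1(G/F)$ corresponding to $\sigma$. Because depth zero supercuspidals of $G(F^?)$ are classified by pairs $(x,\rho)$ with $x$ a vertex in the Bruhat--Tits building and $\rho$ a cuspidal representation of the finite reductive quotient $G_x/G_{x,0+}$ -- data which depends only on $G$ and $k$ -- the transferred $\sigma^\flat$ should again be a depth zero supercuspidal compactly induced from the same datum. Invoking Conjecture \ref{commute} (assumed in the statement), the Fargues--Scholze parameter $\CL^{ss}(\sigma)$ corresponds to the Genestier--Lafforgue parameter $\CL^{ss}(\sigma^\flat)$ under Deligne's bijection \eqref{delnn}; since Deligne's bijection preserves the Frobenius conjugacy classes on the tame quotient, it preserves both purity and the property of being ramified. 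Hence $\sigma^\flat$ is a pure depth zero supercuspidal of $G(F)$.

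I would then apply Corollary \ref{maincor} to $\sigma^\flat$: the hypotheses $|k|>5$, $p \nmid |W|$, and $G$ non-torus semisimple are inherited directly from those on $G$ and $F^\sharp$, and the conclusion is that $\CL^{ss}(\sigma^\flat)$ is ramified. Transferring back through Deligne's bijection yields that $\CL^{ss}(\sigma)$ is ramified, as desired.

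The main obstacle is verifying that the Ganapathy--Kazhdan transfer really does send a depth zero supercuspidal with inducing datum $(x,\rho)$ to a depth zero supercuspidal with the same datum, rather than to some subquotient of a principal series or to a representation of different depth. For depth zero this should follow from the explicit form of Ganapathy's Hecke algebra isomorphism at pro-$p$ Iwahori level, together with the observation that the relevant Hecke algebra modulo its Iwahori-level nilpotent ideal is controlled by the finite reductive quotient $G_x/G_{x,0+}$, which is independent of the characteristic of $F^?$. Once this bookkeeping is done, the rest of the argument is immediate from the already-established results together with the assumed compatibility Conjecture \ref{commute}.
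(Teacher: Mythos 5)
Your strategy is the paper's strategy: take $F = k(\!(t)\!)$, note it is close to $F^\sharp$, and transfer across Deligne's bijection using the assumed Conjecture \ref{commute}. (The paper's text says ``$0$-close'', but $0$-closeness is vacuous; what is actually used is $1$-closeness, which you state correctly, together with the fact you observe that depth-zero representations lie in $\CA^1$ because $I_1$ is contained in the pro-$p$ radical of any parahoric whose alcove contains the standard one.) The one place where you diverge is that you want to apply Corollary \ref{maincor} directly to the transferred representation $\sigma^\flat$, which forces you to justify that the Ganapathy--Kazhdan transfer sends a depth-zero supercuspidal to a supercuspidal with the same inducing datum --- the obstacle you flag. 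That is in fact a known consequence of the compatibility of the transfer with parabolic induction, but the paper sidesteps it entirely by running the contrapositive through the \emph{second} clause of Corollary \ref{padic} (equivalently Corollary \ref{sph}, which applies to any irreducible $\sigma^\flat$, not just supercuspidal ones): if $\CL^{ss}(\sigma)$ were unramified, then $\CL^{ss}(\sigma^\flat)$ is pure and unramified by Conjecture \ref{commute} plus Deligne, so $\sigma^\flat$ is a constituent of an unramified principal series and hence has Iwahori-fixed vectors; since the Hecke-algebra isomorphism \eqref{hn} at level $n=1$ restricts compatibly to the subalgebra at level $n=0$, $\sigma$ would then have Iwahori-fixed vectors as well, contradicting supercuspidality. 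That contrapositive version only needs the transfer to preserve the property ``lies in $\CA^0$'', which is much more elementary than preservation of supercuspidality, and is the cleaner way to close your argument.
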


One can hope to derive consequences of Conjecture \ref{commute} for general supercuspidal representations of a general $p$-adic field $F^\sharp$, replacing $F^\sharp$ by a Galois extension $F'$ with an $n$-close $F$ for any $n$.  This is the strategy followed by Henniart in his proof of the numerical local correspondence for $p$-adic fields in \cite{He88}; the Galois extension $F'$ in his construction was obtained by applying  the results of \cite{AC} on cyclic base change.   For general groups over $p$-adic fields one can define a version of local cyclic base change using the global methods of \cite{Lab}, as explained in \S \ref{basec}.  In order to apply this to prove results about purity, we need to know the answers to the following questions:

\begin{question}  Does the Fargues-Scholze parametrization commute with cyclic base change, as described in \S \ref{basec}?
\end{question}

\begin{question}  Let $F'/F^\sharp$ be a cyclic extension of prime order.  Suppose $\pi \in \CA^n(G/F^\sharp)$, and $\pi'$ belongs to the base change of $\pi$ to $G(F')$.
Is $\pi' \in \CA^n(G/F')$?
\end{question}

\appendix
\section{Globalization of discrete series, by Rapha\"el Beuzart-Plessis}

Let $X$ be a smooth proper algebraic curve $X$ over $\mathbb{F}_q$, with function field $K = \mathbb{F}_q(X)$. Let $G$ be a connected reductive group over $K$ and let $Z^0$ be the neutral component of its center. Recall that for $v\in \lvert X\rvert$ and $\pi_v$ a discrete series of $G(K_v)$ a {\it pseudo-coefficient} of $\pi_v$ is a function $\varphi_{\pi_v}\in C_c^\infty(G(K_v)/Z^0(K_v),\xi^{-1}_v)$, where $\xi_v$ denotes the restriction of the central character of $\pi_v$ to $Z^0(K_v)$, such that for every tempered irreducible representation $\pi'_v$ of $G(K_v)$ on which $Z^0(K_v)$ acts through the character $\xi_v$, we have
$$\displaystyle \mathrm{Trace}\; \pi'_v(\varphi_{\pi_v})=\left\{\begin{array}{ll}
1 & \mbox{ if } \pi'_v\simeq \pi_v; \\
0 & \mbox{ if } \pi'_v\not\simeq \pi_v.
\end{array} \right.$$
The existence of pseudo-coefficients can be deduced from the Trace Paley-Wiener theorem \cite{BDK}. Moreover, it follows from Langlands classification that if $\pi'_v$ is an arbitrary smooth irreducible representation of $G(K_v)$, on which $Z^0(K_v)$ acts through the character $\xi_v$, if $\mathrm{Trace}\; \pi'_v(\varphi_{\pi_v})\neq 0$ then $\pi'_v$ has the same cuspidal support as $\pi_v$.

Let $\xi: Z^0(K)\backslash Z^0(\mathbf{A}_K)\to \mathbf{C}^\times$ be a continuous character.

Let $T$ be the universal Cartan of $G$ i.e. the torus quotient of a Borel subgroup in any quasi-split inner form of $G$. Let $S_{ram}\subset |X|$ be the finite subset of places where $G$ or $\xi$ is ramified and choose for every $v\in |X|\setminus S_{ram}$ a hyperspecial maximal compact subgroup $J_v\subset G(K_v)$ such that $J_v=G(\mathcal{O}_v)$ for almost all $v$. By the Satake isomorphism, ($J_v$-)unramified irreducible representations of $G(K_v)$ are parametrized by $\widehat{T}_v//W_v$ where $\widehat{T}_v$ denotes the complex torus of unramified characters of $T(K_v)$ and $W_v=Norm_{G(K_v)}(T)/T(K_v)$ the Weyl group. We denote by $(\widehat{T}_v//W_v)_{\xi_v}$ the subvariety of unramified characters $\chi\in \widehat{T}_v$ such that $\chi\mid_{Z^0(K_v)}=\xi_v$.

The purpose of this appendix is to prove the following lemma.   

\begin{lemma}  \label{L:appendix}
Let $S, S' \subset |X|$ be disjoint subsets of places of $X$ with $S'\cap S_{ram}=\emptyset$. For each $v \in S$ let $\pi_v$ be a discrete series representation whose central character restricted to $Z^0(K_v)$ equals $\xi_v$, with at least one $v_0 \in S$ such that $\pi_{v_0}$ is supercuspidal. Also, for $v\in S'$ let $X_v\subset (\widehat{T}_v//W_v)_{\xi_v}$ be a proper closed subset for the Zariski topology. Then there exists a (necessarily cuspidal) automorphic representation $\Pi \simeq \bigotimes'_{x \in |X|} \Pi_x$ of $G(\mathbf{A}_K)$, on which $Z^0(\mathbf{A})$ acts through the character $\xi$, such that:
\begin{enumerate}
\item For all $v \in S$, $\Pi_v$ has non-zero trace against a pseudo-coefficient for $\pi_v$ (in particular, $\Pi_v$ and $\pi_v$ have the same cuspidal support),
\item For all $v\in S'$, $\Pi_{v}$ is an unramified representation with Satake parameter in $(\widehat{T}_v//W_v)_{\xi_v}\setminus X_v$.
\end{enumerate}
\end{lemma}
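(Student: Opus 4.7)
The plan is to prove the lemma by a simple trace formula argument, using a version of the (non-invariant) simple trace formula available on function fields via Labesse--Lemaire \cite{LL}. The overall strategy is to design a test function $f = \prod_v f_v \in C_c^\infty(G(\mathbf{A}_K)/Z^0(\mathbf{A}_K),\xi^{-1})$ whose local components enforce the prescribed local conditions on any cuspidal representation contributing to the spectral side, and then to verify that the geometric side does not vanish identically.

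I would construct $f$ as follows. At the distinguished place $v_0 \in S$ where $\pi_{v_0}$ is supercuspidal, let $f_{v_0}$ be a matrix coefficient of $\pi_{v_0}$, which is compactly supported modulo $Z^0(K_{v_0})$. This has two consequences: on the spectral side only cuspidal representations $\Pi$ with $\Pi_{v_0} \simeq \pi_{v_0}$ can contribute, and on the geometric side only elliptic semisimple classes in $G(K)/Z^0(K)$ survive. At the remaining $v \in S$, let $f_v$ be a pseudo-coefficient of $\pi_v$; then $\operatorname{tr}(\Pi_v(f_v)) \neq 0$ forces $\Pi_v$ to have the same cuspidal support as $\pi_v$. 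At each $v \in S'$, since $X_v \subsetneq (\widehat{T}_v//W_v)_{\xi_v}$ is a proper Zariski-closed subset and the Satake isomorphism identifies $\mathcal{H}(G(K_v),J_v)_{\xi_v}$ with the ring of regular functions on $(\widehat{T}_v//W_v)_{\xi_v}$, we may pick $f_v \in \mathcal{H}(G(K_v),J_v)_{\xi_v}$ with Satake transform vanishing on $X_v$ but not identically. At every other place, take $f_v$ to be a $\xi_v^{-1}$-equivariant function supported on $K_v Z^0(K_v)$ for a compact-open subgroup $K_v$ that will be adjusted below.

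Applying the simple trace formula produces an identity of the form
\[
\sum_\Pi m(\Pi)\,\operatorname{tr}(\Pi(f)) \;=\; \sum_{\gamma}\operatorname{vol}\bigl(G_\gamma(K)Z^0(\mathbf{A}_K)\backslash G_\gamma(\mathbf{A}_K)\bigr)\, O_\gamma(f),
\]
where $\Pi$ ranges over cuspidal representations with central character $\xi$ on $Z^0(\mathbf{A}_K)$ and $\gamma$ over elliptic semisimple conjugacy classes. By the construction of the local factors, any $\Pi$ contributing non-trivially to the spectral side automatically satisfies conditions (1) and (2) of the lemma, so nonvanishing of the right-hand side will produce the desired $\Pi$.

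The main obstacle is arranging the geometric side to be non-zero. The identity conjugacy class contributes $\operatorname{vol}(Z^0(\mathbf{A}_K)G(K)\backslash G(\mathbf{A}_K))\cdot f(1)$, which can be made non-zero by an appropriate normalization of the matrix coefficient, the pseudo-coefficients, and the spherical functions. To dispose of the remaining elliptic contributions, I would shrink the auxiliary compact-open subgroups $K_v$ at a sufficiently large finite set of unramified places $v \notin S \cup S'$ --- in the spirit of the Poincar\'e series construction of Proposition \ref{parah} and of \cite[Prop.~8.2]{GLo} --- so that $G(K) \cap \operatorname{supp}(f)$ is forced into $Z^0(K)$. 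Modulo $Z^0(K)$ only the identity class then contributes, and the geometric side reduces to the single nonzero identity term. Combined with the spectral--geometric identity above this forces at least one cuspidal $\Pi$ in the spectrum with all the prescribed local properties, and cuspidality is automatic from the matrix-coefficient choice at $v_0$.
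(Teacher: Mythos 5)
Your overall framework (simple trace formula, pseudo-coefficients at $S$, spherical functions at $S'$ vanishing on $X_v$, matrix coefficient at the supercuspidal place $v_0$) matches the paper's. But your plan for handling the geometric side has a fatal flaw.

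You propose to make the geometric side be dominated by the contribution of the identity class, by shrinking the auxiliary compacts so that $G(K)\cap \operatorname{supp}(f)\subset Z^0(K)$. This cannot work inside the Deligne--Kazhdan simple trace formula, and in fact conflicts with its hypotheses. The simple trace formula invoked here requires, as a \emph{precondition} on $f$, that every $\delta\in G(K)$ whose adelic conjugacy class meets $\operatorname{supp}(f)$ be \emph{regular} elliptic; the identity (and all central elements) fail this. So if $f(1)\neq 0$, the identity lies in the support, the hypothesis is violated, and the identity \eqref{DK simple TF} simply does not hold. Moreover, even if one ignored this, the putative ``identity term'' would come with the factor $\operatorname{vol}\bigl(G(K)Z^0(\mathbf{A}_K)\backslash G(\mathbf{A}_K)\bigr)$, which is infinite whenever $G/Z^0$ is not anisotropic over $K$. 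And if instead you shrink supports so aggressively that no regular elliptic element of $G(K)$ meets $\operatorname{supp}(f)$, then the geometric side of the simple trace formula is identically zero and you produce nothing.

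This is exactly the reason the Poincar\'e-series argument of \S\ref{poincaresplit} (and of \cite{GLo}) does not extend directly to discrete series that are not supercuspidal: the matrix-coefficient trick that collapses the sum to a single identity term only works at places where you induce from compact-open subgroups. Here you are forced to use pseudo-coefficients, so you must genuinely land on the regular elliptic locus. The missing idea in your proposal is the existence of a regular elliptic $\delta\in G(K)$ with $G_\delta^+=G_\delta$ and $O_\delta(f_v)\neq 0$ for every $v\in S\cup S'$. Establishing this is the real content of the appendix: for $v\in S'$ one uses that the Satake transform of $f_v$ is nonzero to get nonvanishing of some hyperbolic regular orbital integral; for $v\in S$ one needs the nonvanishing of elliptic regular orbital integrals of pseudo-coefficients, which is a nontrivial result (\cite{BP}, via elliptic orthogonality relations for discrete series characters); and one then has to patch these local elements into a rational $\delta$, which requires a weak approximation argument that in general fails, handled in the paper via the fact that the closure of $G(K)$ in $\prod_{v\in S\cup S'}G(K_v)$ is open of finite index. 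Once such a $\delta$ is in hand, one shrinks the auxiliary supports so that \emph{only $\delta$}, not the identity, survives on the geometric side. Your write-up omits all of this and would not compile into a proof.

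Separately, a minor point: you should take $f_{v_0}$ to be a matrix coefficient of $\pi_{v_0}^\vee$, not of $\pi_{v_0}$, so that the Schur orthogonality relations give $\operatorname{tr}\pi_{v_0}(f_{v_0})\neq 0$.
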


\begin{proof}
This follows from an application of the Deligne-Kazhdan simple trace formula \cite[\S A.1 p.41]{DKV}. More precisely, we consider test functions $f=\otimes_v f_v\in C_c^\infty(G(\mathbf{A}_K)/Z^0(\mathbf{A}_K),\xi^{-1})$ satisfying:
\begin{enumerate}
\item For each $v\in S$, $f_v=\varphi_{\pi_v}$ is a pseudo-coefficient of $\pi_v$. Moreover, for $v=v_0$ we assume, as we may, that $f_{v_0}$ is a matrix coefficient of $\pi_{v_0}^\vee$;

\item For each $v\in S'$, $f_v(g)=\int_{Z^0(K_v)} f'_v(z^{-1}g) \xi_v(z)dz$ where $f'_v\in \mathcal{H}(G(K_v),J_v)$ is a $J_v$-spherical function whose Satake transform (a regular function on $\widehat{T}_v//W_v$) vanishes identically on $X_v$ but not on $(\widehat{T}_v//W_v)_{\xi_v}$;

\item For almost all $v$, $f_v$ equals the function $f^0_v$ which coincides with $zk\mapsto \xi_v(z)^{-1}$ on $Z^0(K_v)J_v$ and is zero outside;

\item For all $\delta\in G(K)$, if the $G(\mathbf{A}_K)$-conjugacy class of $\delta$ intersects the support of $f$ then $\delta$ is regular elliptic.
\end{enumerate}

Then, the Deligne-Kazhdan simple trace formula can be applied to $f$ yielding the following identity:
\begin{equation}\label{DK simple TF}
\displaystyle \sum_\Pi \mathrm{Trace}\; \Pi(f)=\sum_{\delta} \iota(\delta)^{-1}vol(G_\delta(K)Z^0(\mathbf{A}_K)\backslash G_\delta(\mathbf{A}_K))O_\delta(f)
\end{equation}
where the left sum runs over cuspidal automorphic representations $\Pi$ of $G(\mathbf{A}_K)$ with central character $\xi$, the right sum runs over elliptic regular conjugacy classes in $G(K)/Z^0(K)$, $G_\delta$ (resp. $G_\delta^+$) denotes the centralizer of $\delta$ (resp. $Z^0 \delta$) in $G$, $\iota(\delta)=[G_\delta^+(K):G_\delta(K)]$ and
$$\displaystyle O_\delta(f)=\prod_v \underbrace{\int_{G_\delta(K_v)\backslash G(K_v)} f_v(g_v^{-1}\delta g_v) dg_v}_{=: O_\delta(f_v)}$$
stands for the orbital integral of $f$ at $\delta$. Moreover, both sums are finite.

The choice of the test functions $f_v$ for $v\in S\cup S'$ implies that the only nonzero contributions to the left hand side of \eqref{DK simple TF} come from cuspidal representations satisfying the conclusion of the lemma. Thus, it suffices to see that $f$ can be arranged so that the right hand side of \eqref{DK simple TF} is nonzero. For this we need an auxiliary lemma.

\begin{lemma}
We can find local test functions $f_v$, for $v\in S\cup S'$, satisfying conditions (i) and (ii) above as well as a regular elliptic element $\delta\in G(K)$ such that $G_\delta^+=G_\delta$ and $O_\delta(f_v)\neq 0$ for every $v\in S\cup S'$.
\end{lemma}

\begin{proof}
If weak approximation holds for the inclusion $G(K)\hookrightarrow \prod_{v\in S\cup S'} G(K_v)$, then the argument is pretty standard. More precisely, we can choose any set of test functions $(f_v)_{v\in S\cup S'}$ satisfying (i) and (ii). Indeed, by definition of the Satake transform, for every $v\in S'$, the hyperbolic regular orbital integrals of $f_v$ are not identically zero  (where by {\it hyperbolic regular orbital integral} we mean one that is associated to a regular element in the maximal torus of a Borel subgroup) whereas by \cite{BP}, for each $v\in S$, the elliptic regular orbital integrals of $f_v$ are not identically zero. From this, weak approximation and the local constancy of regular semi-simple orbital integrals, we deduce the existence of $\delta\in G(K)$ that is elliptic regular in $G(K_v)$ for every $v\in S$ (hence in particular, $\delta$ is regular elliptic in $G(K)$ since $S\neq \emptyset$) and such that $O_\delta(f_v)\neq 0$ for every $v\in S\cup S'$. Moreover, we can also certainly arrange to have $G_\delta^+=G_\delta$ as the subset of regular semi-simple elements $\delta_v\in G(K_v)$ satisfying $G_{\delta_v}^+=G_{\delta_v}$ is open and dense.

To deal with the general case (that is including the case where weak approximation fails), we introduce the closure $\overline{G(K)}$ of $G(K)$ in $G(K_{S\cup S'})=\prod_{v\in S\cup S'} G(K_v)$. According to the argument at the beginning of \cite[Proof of Proposition 7.9 p.419]{PR} (which is valid regardless of the characteristic of the global field), $\overline{G(K)}$ is an open subgroup of finite index in $G(K_{S\cup S'})$. Let $v\in S\cup S'$ and let $H_v$ be a normal open subgroup of finite index of $G(K_v)$ contained in $\overline{G(K)}\cap G(K_v)$. Then, we just need to check the existence of a test function $f_v$ satisfying condition (i) or (ii) above (according to whether $v\in S$ or $v\in S'$) and a regular semi-simple element $\delta\in H_v$ such that $O_\delta(f_v)\neq 0$.  First we consider the case $v\in S'$. Then, the image of $H_v\cap T(K_v)$ in $T(K_v)/T(\mathcal{O}_v)$ is a subgroup of finite index corresponding to a finite quotient $\widehat{T}'_v$ of $\widehat{T}_v$. Then, we look for a regular function on $\widehat{T}_v//W_v$ vanishing identically on $X_v$ but not on $(\widehat{T}_v//W_v)_{\xi_v}$ and whose push forward to $\widehat{T}'_v$ via the projection $\widehat{T}_v\to \widehat{T}'_v$ is nonzero. Such a function is readily seen to exist. Consider now the case $v\in S$ and let $f_v$ be a pseudo-coefficient of $\pi_v$. By \cite{BP}, it suffices to show that the restriction of the Harish-Chandra character $\Theta_{\pi_v}$ of $\pi_v$ to the subset $H_{v,ell-rs}$ of elliptic regular semisimple relements in $H_v$ is nonzero. For $\delta$ a finite dimensional representation of $G(K_v)/H_v$, $\pi_v\otimes \delta$ is a direct sum of discrete series with Harish-Chandra character $\Theta_{\pi_v\otimes \delta}=\Theta_{\pi_v}\Theta_\delta$ where $\Theta_\delta$ denotes the usual character of $\delta$. Then, if $\Theta_{\pi_v}\mid_{H_{v,ell-rs}}=0$ the restriction of $\sum_{\delta} \frac{1}{\dim \delta}\Theta_{\pi_v\otimes \delta}$ to the elliptic regular semisimple locus of $G(K_v)$ would be zero and this would contradict the elliptic orthogonality relations of \cite{BP} on Harish-Chandra characters of discrete series.
\end{proof}

Let $(f_v)_{v\in S\cup S'}$ and $\delta\in G(K)$ as in the above lemma. Then, we choose the functions $f_v\in C_c^\infty(G(K_v)/Z^0(K_v),\xi_v^{-1})$ for $v\notin S\cup S'$ such that $O_\delta(f_v)\neq 0$ for all $v$ and $f_v=f^0_v$ for almost all $v$ (this is certainly possible thanks to the condition $G_\delta^+=G_\delta$). Then, the set of semi-simple conjugacy classes of $G(K)$ whose $G(\mathbf{A}_K)$-conjugacy classes meet the support of $f$ is finite. Hence, up to shrinking the support of $f_v$ at some auxiliary place $v\notin S\cup S'$, we may assume that this set only consists in the orbit of $\delta$. The function $f$ then satisfies condition (iii) above and the right hand side of the trace formula \eqref{DK simple TF} is reduced to the term corresponding to $\delta$. As $O_\delta(f)\neq 0$ by construction, this implies that the left hand side does not vanish either and we are done.
\end{proof}

\end{document}